\documentclass[11pt]{article}
\usepackage{fullpage,amsthm,amssymb,amsmath, amsfonts}
\usepackage{graphicx,algorithmic,algorithm}
\usepackage{enumerate}
\usepackage{tikz}
\usetikzlibrary{shapes}
\usepackage{hyperref}
\usepackage[normalem]{ulem}
\hypersetup{
    pdftitle=   {Coloring graphs without fan vertex-minors and graphs without cycle pivot-minors},
   pdfauthor=  {Ilkyoo Choi, O-joung Kwon, and Sang-il Oum}
}
\usepackage{authblk}
\newtheorem{theorem}{Theorem}[section]
\newtheorem{lemma}[theorem]{Lemma}
\newtheorem{proposition}[theorem]{Proposition}

\newtheorem{conjecture}[theorem]{Conjecture}

\newtheorem*{THMMAIN}{Theorem \ref{thm:mainfanvertexminor}}
\newtheorem*{THMMAIN2}{Theorem \ref{thm:mainpivotminor}}

\usetikzlibrary{patterns}
\usetikzlibrary{fadings}
\usetikzlibrary{shapes,snakes}
\usetikzlibrary{decorations.markings}
\usetikzlibrary{decorations.pathreplacing}

\newcommand\abs[1]{\lvert #1\rvert}

\newcommand{\cC}{\mathcal{C}}

\begin{document}
\title{Coloring graphs without fan vertex-minors and graphs without cycle pivot-minors}

\author[1]{Ilkyoo Choi\thanks{Supported by the National Research Foundation of Korea (NRF) grant funded by the Korea government (MSIP) (NRF-2015R1C1A1A02036398).}}
\affil[1]{Department of Mathematical Sciences, KAIST,  Daejeon, South Korea.}

\author[2]{O-joung Kwon\thanks{Supported by ERC Starting Grant PARAMTIGHT (No. 280152).}}

\affil[2]{Institute for Computer Science and Control, Hungarian Academy of Sciences, Budapest, Hungary.}

\author[1]{Sang-il Oum\thanks{Supported by Basic Science Research
  Program through the National Research Foundation of Korea (NRF)
  funded by  the Ministry of Science, ICT \& Future Planning
  (2011-0011653).}}

\date\today
\maketitle

\footnotetext{E-mail addresses: \texttt{ilkyoo@kaist.ac.kr} (I. Choi), \texttt{ojoungkwon@gmail.com} (O. Kwon), \texttt{sangil@kaist.edu} (S. Oum)}

\begin{abstract}
A fan $F_k$ is a graph that consists of an induced path on $k$ vertices and an additional vertex that is adjacent to all vertices of the path.
We prove that for all positive integers $q$ and $k$, every graph with sufficiently large chromatic number contains either a clique of size $q$ or a vertex-minor isomorphic to $F_k$. 
We also prove that for all positive integers $q$ and $k\ge 3$, every graph with sufficiently large chromatic number contains either a clique of size $q$ or a pivot-minor isomorphic to a cycle of length $k$.
\end{abstract}

\section{Introduction}
All graphs in this paper are simple, which means no loops and no parallel edges.
Given a graph, a \emph{clique} is a set of pairwise adjacent vertices and an \emph{independent set} is a set of pairwise non-adjacent vertices. 
For a graph $G$, let $\chi (G)$ denote the \emph{chromatic number} of $G$ and let $\omega (G)$ denote the maximum size of a clique of $G$. 
Since two vertices in a clique cannot receive the same color in a proper coloring, the clique number is a trivial lower bound for the chromatic number. 
If $\chi(H)=\omega(H)$ for every induced subgraph $H$ of a graph $G$, then we say $G$ is \emph{perfect}.
Gy\'arf\'as~\cite{Gyarfas1987} introduced the notion of a $\chi$-bounded class as a generalization of perfect graphs.
A class $\cC$ of graphs is \emph{$\chi$-bounded} if there exists a function $f: \mathbb{N} \rightarrow \mathbb{N}$ 
such that for all graphs $G\in \cC$, and all induced subgraphs $H$ of $G$, $\chi (H) \le  f(\omega (H))$.
Therefore the class of perfect graphs is $\chi$-bounded with the identity function.

Chudnovsky, Robertson, Seymour, and Thomas~\cite{ChudnovskyRST2006} proved the strong perfect graph theorem,
 which states that a graph $G$ is perfect if and only if neither $G$ nor its complement contains an induced odd cycle of length at least $5$.  
This shows that there is a deep connection between the chromatic number and the structure of the graph. 
Gy\'arf\'as~\cite{Gyarfas1987} proved that for each integer $k$, the class of graphs with no induced path of length $k$ is $\chi$-bounded.
Gy\'arf\'as also made the following three conjectures for $\chi$-boundedness in terms of forbidden induced subgraphs.
Note that (iii) implies both (i) and (ii).

\begin{conjecture}[Gy\'arf\'as~\cite{Gyarfas1987}]\label{con:gy}
The following classes are $\chi$-bounded:
\begin{enumerate}[\normalfont (i)]
\item The class of graphs with no induced odd cycle of length at least $5$.
\item The class of graphs with no induced cycle of length at least $k$
  for a fixed $k$.
\item The class of graphs with no induced odd cycle of length at least
  $k$ for a fixed $k$.
\end{enumerate}
\end{conjecture}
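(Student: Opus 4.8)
\medskip
\noindent\textbf{Proof proposal.}
As the authors observe, (iii) implies (i) (take $k=5$) and (iii) implies (ii) (a graph with no induced cycle of length at least $k$ a fortiori has no induced odd cycle of length at least $k$), so it suffices to prove (iii); but (i) is the cleanest case, and its proof is the template for the others, so I will describe a plan for (i) and then indicate the extra work. Fix $\kappa$, let $\cC$ be the class of graphs $G$ with $\omega(G)<\kappa$ and no induced odd cycle of length at least $5$, and aim for a bound $\chi(G)\le f(\kappa)$ valid on $\cC$, by induction on $\kappa$. The base case $\kappa=3$ is immediate: a shortest odd cycle of a graph is always induced (a chord splits it into a shorter odd cycle and an even cycle), so a triangle-free graph with no induced odd cycle of length at least $5$ has no odd cycle at all and is bipartite.

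The engine of the inductive step, extending Gy\'arf\'as' proof that graphs with no induced $P_k$ are $\chi$-bounded, is a breadth-first-search \emph{levelling}: fix a vertex $r$ and let $L_i$ be the set of vertices at distance exactly $i$ from $r$, so every edge lies inside some $L_i$ or between consecutive levels. Then $G$ restricted to the even-indexed levels is the disjoint union of the graphs $G[L_{2i}]$, and likewise for the odd levels, so some single level $L_j$ satisfies $\chi(G[L_j])\ge\tfrac12\chi(G)$. This trades ``$G$ has large chromatic number'' for ``a vertex set at a prescribed distance from $r$ has large chromatic number,'' and the payoff is that the part of $G$ below $L_j$ is organised into levels, which controls how vertices of $L_j$ may be joined to one another and to earlier levels --- the lengths of such connecting paths, and with more care their parities.

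From here I would run an iterated descent in the spirit of the Gy\'arf\'as tree method and its refinements: maintain a nested family of balls $B_1\supseteq B_2\supseteq\cdots$ of bounded radius, each of large chromatic number, together with an induced path $P$ through their centres; at each step, either one further BFS level inside the current ball already carries most of its chromatic number and we recurse with a smaller-radius ball, or the neighbourhoods along $P$ interact with the current high-$\chi$ level, and then one extracts from that level an induced path between two of those neighbourhoods and splices it with $P$ --- together with a connecting path through lower levels chosen to fix the overall parity --- into an induced cycle that can be forced to be odd and of length at least $5$. The escape clause is that some vertex turns out to be complete to a set of large chromatic number; that set has clique number less than $\kappa-1$, so the induction hypothesis bounds its chromatic number, a contradiction. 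Statements (ii) and (iii) run through the same framework with the same reduction to bipartiteness in the base case; only the forced length and parity of the resulting hole change.

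The hard part is the step ``some level has large chromatic number $\Rightarrow$ the odd hole appears,'' and especially its parity component, which is exactly what separates the odd-hole statements (i) and (iii) from the all-holes statement (ii). One needs a Ramsey-type principle saying that large chromatic number cannot be concealed behind a bounded number of ``thin'' separations: after conditioning on everything already fixed --- the path $P$, the level $L_j$, and its interface with $L_{j-1}$ --- the next level must still have large chromatic number, so the descent cannot halt before reaching a configuration that forces a long \emph{odd} hole and not merely a long hole. Making this quantitative, and iterating it $\Theta(k)$ times for (iii) without the bound on $\chi$ exploding, is where essentially all the work lies; the levelling, the splicing, and the induction on $\kappa$ are routine by comparison --- and it is precisely because this last step resists a short argument that the statement is still a conjecture.
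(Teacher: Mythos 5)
The statement you are proving is recorded in the paper as a \emph{conjecture} of Gy\'arf\'as: the paper offers no proof of it, noting only that (i) and (ii) have been established elsewhere by Scott and Seymour and by Chudnovsky, Scott, and Seymour, and that (iii) remains open. So there is no proof in the paper to compare yours with, and, more importantly, what you have written is not a proof either. The parts you actually carry out are correct but routine: the reduction of (i) and (ii) to (iii), the base case (a shortest odd cycle is induced, so triangle-free plus no induced odd cycle of length at least $5$ forces bipartiteness), and the observation that in a BFS leveling some single level carries at least half the chromatic number --- the same leveling device this paper uses for Theorems~\ref{thm:mainfanvertexminor} and~\ref{thm:mainpivotminor}.

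The genuine gap is the step you yourself flag: ``some level has large chromatic number $\Rightarrow$ a long induced \emph{odd} cycle appears.'' The iterated descent with nested balls, the splicing of induced paths through lower levels with controlled parity, and the ``Ramsey-type principle'' that large chromatic number cannot hide behind boundedly many thin separations are described but never proved, and for (iii) nobody knows how to prove them --- that is precisely why the statement is still a conjecture. Even for (i) and (ii), where results exist, the published arguments require substantially more than your sketch indicates, so the proposal cannot be salvaged by pointing to them for the missing lemma and then claiming (iii). Conceding in the final sentence that the decisive step ``resists a short argument'' is honest, but it means the proposal establishes nothing beyond the known easy implications among (i), (ii), (iii).
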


There are recent works by Chudnovsky, Scott, and Seymour~\cite{ChScSe_2,ChScSe_3,ChScSe_5} and Scott and Seymour~\cite{ScSe_1,ScSe_4} regarding $\chi$-boundedness and induced subgraphs; in this series of papers they prove (i) and (ii) of Conjecture~\ref{con:gy},  and also solve the case when $k=5$ for (iii).
The full conjecture of (iii) is still open. 
One result in this paper (Theorem~\ref{thm:mainpivotminor}) gives further evidence on  (iii) of Conjecture~\ref{con:gy}, as the half of Theorem~\ref{thm:mainpivotminor} is implied by (iii) of Conjecture~\ref{con:gy}.

Scott and Seymour~\cite{ScSe_4} proved that the class of triangle-free graphs having no long induced even (or odd) cycles have bounded chromatic number, thus extending the result of Lagoutte~\cite{Aurelie2015} who claimed a proof for triangle-free graphs having no induced even cycles of length at least $6$.
It has also been shown that the class of graphs having no induced even cycle~\cite{AddarioCHRS2008} is $\chi$-bounded.

The following graph classes are also known to be $\chi$-bounded:
\begin{itemize}
\item Bipartite graphs, distance-hereditary graphs, and parity graphs are perfect graphs and therefore $\chi$-bounded~\cite{BandeltM86, Butlet88}.
\item Circle graphs are $\chi$-bounded, shown by Kostochka and Kratochv{\'{\i}}l~\cite{KostochkaK1997}.
\item For each integer $k$, the class of graphs of rank-width at most $k$ is $\chi$-bounded, shown by
Dvo\v{r}\'{a}k and Kr\'{a}l'~\cite{DvorakK2012}.
\end{itemize}

\emph{Vertex-minors} and \emph{pivot-minors} are graph containment relations introduced by Bouchet~\cite{Bouchet1987a, Bouchet1987b, Bouchet1988, Bouchet1989a} while conducting research of circle graphs (intersection graphs of chords in a cycle) and $4$-regular Eulerian digraphs.
Furthermore, these graph operations have been used for developing theory on rank-width~\cite{JKO2014,Oum2004,Oum2004a, Oum2006a, Oum12}.
We review these concepts in Section~\ref{sec:prelim}. 
Interestingly, the aforementioned graph classes can be characterized
in terms of forbidden vertex-minors or pivot-minors.
\begin{itemize}
\item Bipartite graphs are graphs having no pivot-minor isomorphic to $C_3$. 
\item Parity graphs are graphs having no pivot-minor isomorphic to $C_5$
\footnote{Parity graphs are known as graphs admitting a split decomposition whose bags are bipartite graphs or complete graphs~\cite{CiceroneS1999}, and it implies that parity graphs are closed under taking pivot-minors. One can easily verify that parity graphs are $C_5$-pivot-minor-free graphs using the fact that parity graphs are the graphs in which every odd cycle has two crossing chords~\cite{Butlet88}.}.  
\item Distance-hereditary graphs are graphs having no vertex-minor isomorphic to $C_5$, shown by Bouchet~\cite{Bouchet1987a,Bouchet1988}.
\item Circle graphs are graphs having no vertex-minor isomorphic to the three graphs in Figure~\ref{fig:vmobscircle}, shown by Bouchet~\cite{Bouchet1994}. 
Circle graphs are graphs having no pivot-minor isomorphic to the fifteen graphs, shown by Geelen and Oum~\cite{GO2009}.
\item Graphs of rank-width at most $k$ can be characterized by a finite list of forbidden pivot-minors, shown by Oum~\cite{Oum2004, Oum2004a}.
\end{itemize} 

	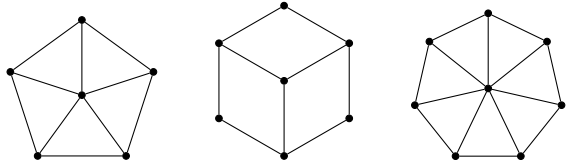
\begin{figure}\centering
\tikzstyle{v}=[circle, draw, solid, fill=black, inner sep=0pt, minimum width=2.5pt]
\tikzset{photon/.style={decorate, decoration={snake}}}
  \begin{tikzpicture}
    \node[v](v0) at (0,0){};
    \foreach \x in {1,2,3,4,5}
    {
      \node[v](v\x) at (\x*72+90:1) {};
      \draw (v0)--(v\x);
    }
      \draw (v1)--(v2)--(v3)--(v4)--(v5)--(v1);
    \end{tikzpicture}\qquad
  \begin{tikzpicture}
    \node[v](v0) at (0,0){};
    \foreach \x in {1,3,5}
    {
      \node[v](v\x) at (\x*60+90:1) {};
      \draw (v0)--(v\x);
    }
    \foreach \x in {2,4,6}
    {
      \node[v](v\x) at (\x*60+90:1) {};
    }
    \draw (v1)--(v2)--(v3)--(v4)--(v5)--(v6)--(v1);
   \end{tikzpicture}\qquad
  \begin{tikzpicture}
    \node[v](v0) at (0,0){};
    \foreach \x in {1,2,3,4,5,6,7}
    {
      \node[v](v\x) at (\x*360/7+90:1) {};
      \draw (v0)--(v\x);
    }
      \draw (v1)--(v2)--(v3)--(v4)--(v5)--(v6)--(v7)--(v1);
    \end{tikzpicture}
\caption{The three forbidden vertex-minors for circle graphs.}
\label{fig:vmobscircle}
\end{figure}

In 2009, Geelen (see~\cite{DvorakK2012}) conjectured the following, which includes all aforementioned results regarding classes of graphs excluding certain vertex-minors.
\begin{conjecture}[Geelen]\label{con:geelen}
For every graph $H$, the class of graphs having no vertex-minor isomorphic to $H$ is $\chi$-bounded.
\end{conjecture}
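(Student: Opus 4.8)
The plan is to organise everything around the rank-width of the graph one is trying to colour. Fix a graph $H$ and an integer $q$. Since a class closed under vertex-minors is automatically closed under induced subgraphs, $\chi$-boundedness of the class of graphs with no vertex-minor isomorphic to $H$ is equivalent to the statement that there is an integer $N=N(H,q)$ such that every graph $G$ with $\chi(G)\ge N$ contains a clique of size $q$ or a vertex-minor isomorphic to $H$; this is the form I would prove. Choose a threshold $r=r(H)$ to be specified later and split according to whether $\operatorname{rw}(G)\le r$ or $\operatorname{rw}(G)>r$. In the first case the theorem of Dvo\v{r}\'{a}k and Kr\'{a}l'~\cite{DvorakK2012} says that the class of graphs of rank-width at most $r$ is $\chi$-bounded, so choosing $N$ large in terms of $r$ and $q$ forces $\omega(G)>q$ and hence a clique of size $q$; this case needs nothing of $r$ and is essentially free.

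The substance is in the case $\operatorname{rw}(G)>r(H)$, and the natural tool is a grid theorem for vertex-minors of the following shape: there is a sequence of circle graphs $\Gamma_1,\Gamma_2,\dots$ (the \emph{comparability grids}), with $\Gamma_m$ a vertex-minor of $\Gamma_n$ whenever $m\le n$, such that (a) every graph of sufficiently large rank-width has a vertex-minor isomorphic to $\Gamma_n$, and (b) every circle graph is a vertex-minor of some $\Gamma_n$. Granting such a statement, the case where $H$ is \emph{itself} a circle graph drops out with no hypothesis on $\chi$: pick $n$ with $H$ a vertex-minor of $\Gamma_n$, let $r(H)$ be the rank-width threshold from (a) for that $n$, and then every $G$ with $\operatorname{rw}(G)>r(H)$ already has $\Gamma_n$, hence $H$, as a vertex-minor. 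This subsumes, for instance, the distance-hereditary case (where $H=C_5$ is a circle graph), and it also clarifies why rank-width is the right parameter: a vertex-minor-closed class fails to have bounded rank-width exactly when all of the $\Gamma_n$ occur in it.

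The remaining case, that $H$ is \emph{not} a circle graph, is the genuine difficulty and is where I expect essentially all of the work to lie. A large $\Gamma_n$ vertex-minor is now useless by itself, because every vertex-minor of a circle graph is again a circle graph, so the grid theorem alone can never produce $H$; the hypothesis $\chi(G)\ge N$ must be used in an essential way. The route I would attempt is a Gy\'arf\'as-type analysis inside a graph of very large chromatic number and bounded clique number: using a breadth-first levelling together with Gy\'arf\'as's theorem that graphs with no long induced path are $\chi$-bounded~\cite{Gyarfas1987}, locate a highly connected template — a long induced path, or a large grid-like skeleton with many private attachments distributed along it — and then apply a controlled sequence of local complementations to carve a copy of $H$ out of that template and its attachments. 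Put differently, one wants a strengthening of the grid theorem asserting that a graph with both large rank-width and large chromatic number, and with no clique of size $q$, contains \emph{every} fixed graph as a vertex-minor; this ``universal extraction'' statement is about as strong as the conjecture itself, and the one known instance of the non-circle case — that circle graphs are $\chi$-bounded, due to Kostochka and Kratochv{\'{\i}}l~\cite{KostochkaK1997}, which is exactly the case $H\in\{$the three obstructions of Figure~\ref{fig:vmobscircle}$\}$ — gives little leverage because excluding a smaller $H$ only shrinks the class. The main obstacle is precisely that, unlike the minor relation, the vertex-minor relation admits no known structural decomposition theorem and no clean tree-like way to amalgamate the many grid-like pieces that the Gy\'arf\'as levelling supplies; making such an amalgamation respect the vertex-minor relation is the step I expect to resist any routine argument.
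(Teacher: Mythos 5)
The statement you are trying to prove is labelled a \emph{conjecture} in the paper (Geelen's conjecture), and the paper does not prove it --- it only establishes special cases (fans as vertex-minors, cycles as pivot-minors) and explicitly treats the general statement as open. So there is no ``paper's own proof'' to compare against, and your proposal should be judged on its own terms: as written, it is not a proof but a research programme with two unproven pillars.

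First, the ``grid theorem for vertex-minors'' you invoke in the case of large rank-width --- that every graph of sufficiently large rank-width contains a fixed (comparability-grid) circle graph as a vertex-minor --- is not something you can take for granted here; it is of the same flavour as Conjecture~\ref{con:oum} in the paper, which the authors state as open and say they cannot verify even for $H=F_n'$ or $H$ an even cycle. Second, and more decisively, you yourself concede that when $H$ is not a circle graph the grid theorem can never produce $H$ (vertex-minors of circle graphs are circle graphs), and that the ``universal extraction'' statement you would need in that case ``is about as strong as the conjecture itself.'' That concession is accurate: it means the argument is circular at its core, and the Gy\'arf\'as levelling plus local complementation sketch is exactly the kind of argument the paper carries out in full only for the very special targets $F_k$ and $C_k$, where the template (a long induced path in a level with controlled attachments to the previous level) happens to match the target's structure. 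There is no indication that this template can be made to yield an arbitrary $H$, and your own closing paragraph identifies the absence of a structural decomposition for vertex-minors as the obstruction. So the gap is concrete and total: both the large-rank-width reduction and the non-circle-graph extraction step are missing, and the second is equivalent to the conjecture being proved.
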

Dvo\v{r}\'{a}k and Kr\'{a}l'~\cite{DvorakK2012} showed that Conjecture~\ref{con:geelen} is true when $H=W_5$, where $W_5$ is the wheel graph on $6$ vertices, depicted in Figure~\ref{fig:vmobscircle}.
Chudnovsky, Scott, and Seymour~\cite{ChScSe_3}  showed that 
(ii) of Conjecture~\ref{con:gy} holds and this implies that Conjecture~\ref{con:geelen} is true when $H$ is a cycle.

In 1997, Scott~\cite{Scott1997} made a stronger conjecture claiming that for every graph $H$,  the class of graphs having no subdivision of $H$ as an induced subgraph
is $\chi$-bounded and proved the conjecture when $H$ is a tree as follows.
However, the conjecture of Scott turned out to be false, shown by Pawlik et al.~\cite{PKKLMTW2014}. 
\begin{theorem}[Scott~\cite{Scott1997}]\label{thm:scott}
  For every tree $H$, the class of graphs having no induced subdivision of $H$ is $\chi$-bounded.
\end{theorem}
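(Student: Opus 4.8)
The plan is to proceed by induction on the number of vertices of the tree $H$, with the base case supplied by Gy\'arf\'as's theorem that graphs with no induced path on $k$ vertices are $\chi$-bounded. First I would reduce to connected graphs: since $\chi(G)$ is the maximum of $\chi$ over the connected components of $G$, and since any subdivision of a tree is connected, it suffices to bound $\chi$ for connected graphs with no induced subdivision of $H$. I would also record the elementary reduction for paths: a subdivision of $P_k$ is just a longer path, and a subpath of an induced path is induced, so ``no induced subdivision of $P_k$'' is literally the same as ``no induced $P_k$'', which is exactly Gy\'arf\'as's hypothesis.

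For the inductive step, fix a leaf $\ell$ of $H$, let $Q$ be the maximal pendant path of $H$ running from $\ell$ back to the first vertex $b$ of degree at least $3$ (if there is no such vertex, $H$ is a path and we are in the base case), and set $H' = H - (V(Q)\setminus\{b\})$, a tree on fewer vertices. I would prove a strengthening of the theorem for $H'$: there is a function $g$ such that every connected graph $G$ with $\chi(G) > g(\omega(G))$ contains an induced subdivision $S$ of $H'$ \emph{together with} a vertex set $R$ that is disjoint from $S$, anticomplete to $S$ except for edges to the copy of $b$, contains a neighbour of that copy of $b$, and still has $\chi(G[R])$ large. Given such $S$ and $R$, one applies the base case inside $G[R]$ to pull out a long induced path playing the role of $Q$ and anticomplete to $S\setminus\{b\}$; attaching it to $S$ at $b$ yields an induced subdivision of $H$, a contradiction.

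The engine that manufactures the high-chromatic reservoir $R$ hanging off the rest of the construction at a single controlled vertex is a breadth-first layering argument. From a root one obtains distance layers $L_0, L_1, \dots$; since the even-indexed layers together induce a disjoint union of single-layer graphs, as do the odd-indexed layers, one has $\chi(G) \le 2\max_i \chi(G[L_i])$, so some layer, and then some connected component $C$ of some $G[L_i]$, has chromatic number at least $\chi(G)/2$. Since every edge lies within a layer or between consecutive layers, such a high-chromatic component interacts with the rest of $G$ only across a controlled interface, and that is precisely what lets one route a pendant path out of it while keeping it induced and anticomplete to an already-built piece; iterating this peeling is how the strengthened statement is propagated along the induction.

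The main obstacle is this anticompleteness bookkeeping. Finding an induced subdivision of $H'$, and separately a long induced path, is routine once one has Gy\'arf\'as's theorem and the layering trick; the difficulty is producing both simultaneously so that they meet in exactly the one intended branch vertex $b$ and are otherwise non-adjacent, all while both pieces sit inside regions of large chromatic number so that the recursion can continue. This is what forces the stronger inductive hypothesis -- carrying along the reservoir $R$ and its attachment data rather than just the bare statement -- and pinning down the quantifiers in that hypothesis so that it is both provable and strong enough to close the induction is the delicate part.
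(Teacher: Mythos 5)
The paper does not actually prove this statement---it is quoted from Scott~\cite{Scott1997}---so there is no internal proof to compare against; judged on its own, your proposal is a plan rather than a proof, and the gap sits exactly where you yourself place ``the delicate part.'' The strengthened inductive hypothesis (an induced subdivision $S$ of $H'$ together with a reservoir $R$ of large chromatic number, anticomplete to $S$ except for edges to the copy of $b$ and containing a neighbour of that copy) is never formulated with precise quantifiers, let alone proved. The levelling observation $\chi(G)\le 2\max_i \chi(G[L_i])$ and the remark that a high-chromatic component of one level meets the rest of $G$ only through adjacent levels do not by themselves produce such a pair $(S,R)$: a component of $G[L_i]$ can attach to $L_{i-1}$ in an entirely uncontrolled way, and arranging for the already-built subdivision to avoid that attachment set except at the single intended vertex is the substance of Scott's argument, not a routine iteration of the peeling trick.

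Moreover, the one step you do spell out fails as stated. Applying Gy\'arf\'as's path theorem inside $G[R]$ yields a long induced path located somewhere in $R$, with no control over whether it starts at a neighbour of the copy of $b$ or how many vertices of $N(b)\cap R$ it contains. If it misses $N(b)$ entirely it cannot be attached; if it meets $N(b)$ more than once, then $b$ acquires extra neighbours on the pendant path and the union is no longer an induced subdivision of $H$. What is needed is a rooted version of the path lemma (a long induced path growing out of a prescribed attachment vertex and meeting its neighbourhood exactly once), together with a separate treatment of the case where $\chi(G[N(b)\cap R])$ is itself large, since deleting $N(b)\cap R$ to enforce inducedness may destroy the chromatic number of the reservoir. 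Supplying these ingredients, and then proving the strengthened hypothesis that manufactures $(S,R)$ in the first place, is essentially the entire content of Scott's theorem, so the proposal as written does not constitute a proof.
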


Theorem~\ref{thm:scott} implies that Conjecture~\ref{con:geelen} is true when $H$ is a vertex-minor of a tree.
Kwon and Oum~\cite{KO2013} showed that a graph is a vertex-minor of some tree if and only if it is a distance-hereditary graph, or equivalently, a graph of rank-width $1$.  Thus, Theorem~\ref{thm:scott} implies that Conjecture~\ref{con:geelen} is true if $H$ is a distance-hereditary graph. 

\bigskip

Our main theorem (Theorem~\ref{thm:mainfanvertexminor}) adds another infinite class of graphs for which Conjecture~\ref{con:geelen} is true.
A \emph{fan} $F_k$ is a graph that consists of an induced path on $k$ vertices and an additional vertex not on the path that is adjacent to all vertices of the path.
We prove the following.

\begin{THMMAIN}
  For each integer $k$, the class of graphs having no vertex-minor isomorphic to $F_k$ is $\chi$-bounded.
\end{THMMAIN}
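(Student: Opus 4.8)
The plan is to reduce the statement to the theorem of Chudnovsky, Scott, and Seymour~\cite{ChScSe_3} on graphs without long induced cycles. The bridge is the following vertex-minor lemma: \emph{for every integer $k\ge 2$, the fan $F_k$ is a vertex-minor of the cycle $C_{k+1}$} (equivalently, since both graphs have $k+1$ vertices, $C_{k+1}$ and $F_k$ are locally equivalent). Granting this, the theorem follows quickly. First, $C_m$ has $C_{m-1}$ as a vertex-minor for every $m\ge 4$ (locally complement at a vertex and then delete it), so by composing vertex-minor relations, $F_k$ is a vertex-minor of $C_\ell$ for every $\ell\ge k+1$. Hence, if a graph $G$ has an induced cycle of length $\ell\ge k+1$, then $F_k$ is a vertex-minor of the subgraph of $G$ induced by the vertex set of that cycle, and therefore of $G$ itself; contrapositively, every graph with no vertex-minor isomorphic to $F_k$ has no induced cycle of length at least $k+1$. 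Since part (ii) of Conjecture~\ref{con:gy}, proved in~\cite{ChScSe_3}, says that for each fixed constant the class of graphs with no induced cycle of length at least that constant is $\chi$-bounded, applying it with constant $k+1$ gives a $\chi$-bounding function for the class of graphs with no vertex-minor isomorphic to $F_k$ that depends only on $k$.

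To prove the lemma I would exhibit an explicit sequence of local complementations carrying $C_{k+1}$ to $F_k$. The mechanism is visible on small cases: one local complementation at a vertex of $C_4$ yields $K_4$ minus an edge, which is $F_3$; two well-chosen local complementations take $C_5$ to $F_4$ (the gem), passing through the house graph; three take $C_6$ to $F_5$. In general one first locally complements at a vertex of the cycle so as to create a chord joining its two neighbours, and then complements successively along the cycle in such a way that a single fixed vertex absorbs all of the others into its neighbourhood while the remaining vertices get threaded into an induced path; verifying that the result is exactly $F_k$ amounts to checking that no extra chords survive and none of the required edges are destroyed. There is slack here: even if a uniform description only produces $F_k$ as a vertex-minor of $C_{g(k)}$ for some larger function $g$, the argument above is unaffected, because $C_m$ has $C_{g(k)}$ as a vertex-minor for all $m\ge g(k)$, so the conclusion ``no $F_k$ vertex-minor $\Rightarrow$ no induced cycle of length at least $g(k)$'' still combines with~\cite{ChScSe_3} in the same way.

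The only substantive step, and the one I expect to be the main obstacle, is thus the lemma: pinning down the local-complementation sequence and verifying it for all $k$, rather than just the small cases that can be done by hand. The reduction from ``no long induced cycle'' to ``$\chi$-bounded'' is entirely borrowed from~\cite{ChScSe_3}, and the implication ``long induced cycle $\Rightarrow$ $F_k$ vertex-minor'' is immediate once the lemma is available, so no further structural analysis of $F_k$-vertex-minor-free graphs is required.
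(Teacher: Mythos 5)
Your reduction would indeed work \emph{if} the key lemma were true, but the lemma is false, and it is exactly the step you defer that breaks. Since $F_k$ and $C_{k+1}$ have the same number of vertices, your claim is (as you say) that they are locally equivalent; however, local complementation preserves the cut-rank $\rho(X)$ (the $\mathrm{GF}(2)$-rank of the adjacency matrix between $X$ and $V\setminus X$) of \emph{every} vertex subset $X$ (see~\cite{Oum2004}), so the number of $3$-element subsets of cut-rank at most $2$ is an invariant of local equivalence. In $C_8$ a $3$-subset has cut-rank at most $2$ exactly when it consists of three consecutive vertices, so there are $8$ such subsets; in $F_7$ (hub $h$, path $p_1p_2\cdots p_7$) the only such subsets are $\{h,p_1,p_2\}$, $\{h,p_6,p_7\}$, $\{p_1,p_2,p_3\}$, $\{p_5,p_6,p_7\}$, $\{p_1,p_3,p_4\}$, $\{p_4,p_5,p_7\}$, so there are $6$. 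Hence $F_7$ is not locally equivalent to $C_8$, i.e.\ not a vertex-minor of it, and the same count ($6$ versus $k+1\ge 8$) separates $F_k$ from $C_{k+1}$ for every $k\ge 7$. Your small-case evidence is genuine but misleading: the statement does hold up to $k=6$ (for instance $F_5$ really is locally equivalent to $C_6$), and the pattern breaks precisely at $k=7$, which is why no uniform local-complementation scheme for all $k$ can exist.

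The fallback you mention, that $F_k$ might only be a vertex-minor of some longer cycle $C_{g(k)}$, is likewise unproved, and you should distrust it: if it held for every $k$, then a graph with no $F_k$ vertex-minor would have no induced cycle of length at least $g(k)$, and Theorem~\ref{thm:mainfanvertexminor} would be a two-line corollary of the long-holes theorem of~\cite{ChScSe_3}. The paper records only the opposite relation (a cycle is a vertex-minor of a large fan, Section~\ref{sec:discuss}) and develops the entire machinery of Section~\ref{sec:3} --- the leveling together with Theorem~\ref{thm:gyarfas}, the $\ell$-patched paths of Proposition~\ref{prop:indepandpath}, and the $E_\ell$-plus-connected-graph structure of Proposition~\ref{prop:ektofan}, followed by a Ramsey argument --- precisely because no reduction of fan-vertex-minor-freeness to long-hole-freeness is available. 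So the only substantive step of your plan is missing, and in the form you state it (with $C_{k+1}$, or any single cycle on $k+1$ vertices) it is provably false for $k\ge 7$; the surrounding deductions, though logically correct, do not salvage the argument, and the paper's actual proof proceeds along an entirely different route.
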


We further ask whether the stronger statement for pivot-minors is also true. 
Conjecture~\ref{con:geelen} would be true if Conjecture~\ref{con:pivotminor} were to be true, because every pivot-minor of a graph is a vertex-minor.

\begin{conjecture}\label{con:pivotminor}
For every graph $H$, the class of graphs having no pivot-minor isomorphic to $H$ is $\chi$-bounded.
\end{conjecture}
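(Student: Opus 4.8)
This is Geelen's Conjecture~\ref{con:geelen} strengthened from vertex-minors to pivot-minors --- and it implies Conjecture~\ref{con:geelen}, since every pivot-minor is a vertex-minor --- so what follows is a plan of attack rather than a proof. A basic monotonicity principle organizes the attack: if $H'$ is a pivot-minor of $H''$, then every graph with no pivot-minor isomorphic to $H'$ also has no pivot-minor isomorphic to $H''$, so to prove the conjecture for a graph $H$ it suffices to prove it for any one ``witness'' $H''$ having $H$ as a pivot-minor. Taking $H''$ to be a sufficiently long cycle, Theorem~\ref{thm:mainpivotminor} therefore already yields the conjecture for every $H$ that is a pivot-minor of a cycle, recovering in particular the classical cases $H=C_3$ (bipartite graphs) and $H=C_5$ (parity graphs); one would like an analogous reduction to Theorem~\ref{thm:scott} for $H$ a pivot-minor of a tree $T$, but here one must be careful, since an induced subdivision of $T$ supplies $T$ as a pivot-minor only when the subdivided paths have suitable lengths. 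The genuinely new input needed for the full conjecture is a general mechanism, which I sketch next.

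Suppose the class were not $\chi$-bounded: fix an integer $q$ and graphs $G_n$ with no pivot-minor isomorphic to $H$, with $\omega(G_n)<q$, and with $\chi(G_n)\to\infty$. If the rank-width of the $G_n$ remained bounded, the theorem of Dvo\v{r}\'{a}k and Kr\'{a}l' would already be a contradiction, so we may pass to a subsequence whose rank-width tends to infinity; thus each $G_n$ has large chromatic number, small clique number, and large rank-width simultaneously. A Gy\'{a}rf\'{a}s-style breadth-first layering argument converts large chromatic number together with small clique number into a long induced path (or a dense, tightly controlled local configuration), while large rank-width should contribute a large ``transversal'' structure --- a rank-width analogue of a grid minor. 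The aim is to combine these into a single highly structured \emph{template} $T_n$ --- a large ladder, prism, or subdivided wall carrying a prescribed pattern of attachments --- extracted from $G_n$ as a vertex-minor or pivot-minor, with the size of $T_n$ growing without bound. The desired contradiction would then come from a \emph{realizability lemma}: every fixed graph $H$ is a pivot-minor of every sufficiently large template of the relevant type, contradicting that $H$ is not a pivot-minor of $G_n$; so the class is $\chi$-bounded after all.

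The realizability lemma is the main obstacle, and it is hard for the same reason the conjecture is hard: there is no structure theorem for graphs excluding a fixed pivot-minor that is comparable to the graph-minor structure theorem, so one cannot simply read off which dense configurations are unavoidable among graphs of large rank-width and bounded clique number. The natural algebraic setting for such a lemma is Bouchet's theory of isotropic systems and delta-matroids, in which local complementation and pivoting become matroid-type operations --- pivoting a graph corresponds, via a principal pivot transform, to re-representing the associated binary matroid --- so that the deep matroid-minors machinery of Geelen, Gerards, and Whittle for binary matroids becomes available in principle. The difficulty is that transferring such statements back to a \emph{graph} $H$, rather than to its equivalence class under local complementation, while keeping vertex deletions distinct from matroid deletions and contractions, involves genuine loss and is delicate. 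A reasonable sequence of milestones is therefore: first the conjecture for all $H$ of bounded rank-width, where one may hope the template itself can be taken of bounded width and dispatched directly by Dvo\v{r}\'{a}k and Kr\'{a}l'; then for all $H$ that are pivot-minors of cycles (and, modulo the parity subtlety above, of trees), as in the first paragraph; and only then general prime $H$ via the layering-plus-realizability scheme.
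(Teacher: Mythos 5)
The statement you were asked about is Conjecture~\ref{con:pivotminor}; the paper does not prove it, and it is open. You correctly recognized this and offered a plan rather than a proof, so there is no complete argument here to certify. The parts of your first paragraph that are actual mathematics are correct and agree with what the paper records in its discussion section: the monotonicity principle (if $H$ is a pivot-minor of $H''$, then every graph with no $H$-pivot-minor has no $H''$-pivot-minor, so $\chi$-boundedness for $H''$ implies it for $H$) is right, and it does yield the conjecture for every $H$ that is a pivot-minor of a cycle via Theorem~\ref{thm:mainpivotminor}, and for pivot-minors of suitable trees via Theorem~\ref{thm:scott}; your caution about parities of subdivided edges is also well placed (the paper states the tree case only for the $1$-subdivision of a tree and for trees satisfying the Gy\'arf\'as--Sumner conjecture).

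The genuine gap is your ``realizability lemma,'' and it is worth naming precisely why it fails as stated. You ask for a template $T_n$, unavoidable as a pivot-minor in every graph of sufficiently large rank-width (after the Dvo\v{r}\'ak--Kr\'al' reduction), such that \emph{every} fixed $H$ is a pivot-minor of every sufficiently large such template. This cannot be: bipartite graphs have unbounded rank-width and are closed under pivot-minors, yet never contain $C_3$ as a pivot-minor, so no structure unavoidable in all graphs of large rank-width can contain $C_3$ (let alone an arbitrary $H$) as a pivot-minor. Any unavoidable-template statement of this kind must restrict $H$ --- this is exactly the paper's Conjecture~\ref{con:oum}, which posits such a lemma only for $H$ a \emph{bipartite circle graph} and is itself open --- and for general $H$ the large-chromatic-number and bounded-clique-number hypotheses must enter the extraction of the template in an essential way, not merely as a means of discarding the bounded-rank-width case. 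Your plan, as written, conflates these two regimes; the paper's actual proofs of Theorems~\ref{thm:mainfanvertexminor} and~\ref{thm:mainpivotminor} show what a working instance looks like (a Gy\'arf\'as leveling producing a long induced path in one level, plus ad hoc pivoting lemmas tailored to the specific target $F_k$ or $C_k$), and no analogue of those pivoting lemmas is known for general~$H$.
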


Theorem~\ref{thm:scott} implies that if $H$ is a subdivision of $K_{1,n}$, then Conjecture~\ref{con:pivotminor} is true. Thus, Conjecture~\ref{con:pivotminor} is true when $H$ is a pivot-minor of a subdivision of $K_{1,n}$.

Scott and Seymour~\cite{ScSe_1} proved that the class of graphs with no odd hole is $\chi$-bounded, proving (i) of Conjecture~\ref{con:gy}. Thus, Conjecture~\ref{con:pivotminor} holds when $H=C_5$.
Our second theorem provides another evidence to Conjecture~\ref{con:pivotminor} as follows.

\begin{THMMAIN2}
For each integer $k\ge 3$, the class of graphs having no pivot-minor  isomorphic to a cycle of length $k$ is $\chi$-bounded.
\end{THMMAIN2}

Theorem~\ref{thm:mainpivotminor} does not follow from the result of Chudnovsky, Scott, and Seymour~\cite{ChScSe_3} on long holes.
The reason is that for every pair of integers $k$ and $\ell$ with $k>\ell$ and $k-\ell\equiv 1\pmod 2$, $C_k$ has no pivot-minor isomorphic to $C_{\ell}$ 
\footnote{This can be checked using the result of Bouchet~\cite{Bouchet1988} that if $H$ is a pivot-minor of $G$ and $v\in V(G)\setminus V(H)$, then 
$H$ is a pivot-minor of one of $G\setminus v$ and $G\wedge vw\setminus v$ for a neighbor $w$ of $v$. It implies that if $C_{\ell}$ is isomorphic to a pivot-minor of $C_{k}$ and $k>\ell$, then 
$C_{\ell}$ is isomorphic to a pivot-minor of $C_{k-2}$.}
(but has a pivot-minor isomorphic to every shorter induced cycle with the same parity).
We would like to mention that if (iii) of Conjecture~\ref{con:gy} were to be true, 
then this would imply Conjecture~\ref{con:pivotminor} is true when $H$ is an odd cycle.

The paper is organized as follows. In Section~\ref{sec:prelim}, we provide necessary definitions including vertex-minors, pivot-minors, and a leveling of a graph.
Section~\ref{sec:3} proves Theorem~\ref{thm:mainfanvertexminor}. We show that for a leveling of a graph, if a level contains a sufficiently long induced path, then the graph contains a large fan as a vertex-minor. 
We devote in Subsections~\ref{sec:pathwithmatching} and \ref{sec:patchedpath} to show how to find a simple structure containing a fan vertex-minor from a leveling with a long induced path in a level. 
With the help of a result by Gy\'{a}rf\'{a}s~\cite{Gyarfas1987} (Theorem~\ref{thm:gyarfas}) 
we show Theorem~\ref{thm:mainfanvertexminor} in Subsection~\ref{sec:fanfreegraphs}.
Section~\ref{sec:4} presents a proof of Theorem \ref{thm:mainpivotminor} by using a similar strategy. 
However, there is an issue of finding a pivot-minor isomorphic to a long induced cycle from a graph consisting of a long induced path with a vertex having many neighbors on it. 
In fact, this is not always true; for instance, a graph obtained from a fan by subdividing each edge on the path once is bipartite, and thus, it contains no odd cycles. 
We need a relevant result regarding the parity of a cycle, and we show
in Subsection~\ref{sec:findingfan} that for every fixed $k$, there exists
$\ell$ with $\ell\equiv k\pmod 2$ such that every graph consisting of an induced path $P$ of length $\ell$ and a vertex $v$ not on $P$ where  $v$ is adjacent to the end vertices of $P$ and may be adjacent to some other vertices contains a pivot-minor isomorphic to $C_k$.
Based on this result, we show Theorem~\ref{thm:mainpivotminor} in Subsection~\ref{sec:nockpmchibound}.
We conclude the paper by further discussions in Section~\ref{sec:discuss}.

\section{Preliminaries}\label{sec:prelim} 
For a graph $G$, let $V(G)$ and $E(G)$ denote the vertex set and the edge set of $G$, respectively. 
For $S\subseteq V(G)$, let $G[S]$ denote the subgraph of $G$ induced on the vertex set $S$. For $v\in V(G)$ and $S\subseteq V(G)$, let $G\setminus v$ be the graph obtained from $G$ by removing $v$, and let $G\setminus S$ be the graph obtained by removing all vertices in $S$. 
For $F\subseteq E(G)$, let $G\setminus F$ denote the graph obtained from $G$ by removing all edges in $F$.
For $v\in V(G)$, the set of \emph{neighbors} of $v$ in $G$ is denoted by $N_G(v)$.

The \emph{length} of a path is the number of edges on the path.

For two positive integers $k$ and $\ell$, let $R(k,\ell)$ be the \emph{Ramsey number}, which is the minimum integer satisfying that every graph with at least $R(k,\ell)$ vertices contains either a clique of size $k$ or an independent set of size $\ell$. By Ramsey's Theorem~\cite{Ramsey1930}, $R(k, \ell)$ exists for every pair of positive integers $k$ and $\ell$.

\subsection*{Vertex-minors and pivot-minors}

Given a graph $G$ and a vertex $v\in V(G)$, let $G*v$ denote the graph obtained from $G$ by applying local complementation at $v$; the \emph{local complementation} at $v$ is an operation to replace the subgraph induced on $N_G(v)$ with its complement.
A graph $H$ is a \emph{vertex-minor} of $G$ if $H$ can be obtained from $G$ by applying a sequence of local complementations and vertex deletions.

The graph obtained from $G$ by \emph{pivoting} an edge $uv\in E(G)$ is defined by $G\wedge uv:=G*u*v*u$. 
A graph $H$ is a \emph{pivot-minor} of $G$ if $H$ can be obtained from $G$ by pivoting edges and deleting vertices.
By the definition of pivoting edges, every pivot-minor of a graph $G$ is also its vertex-minor.

For an edge $uv$ of a graph $G$, let $S_1:= N_G(u)\setminus (N_G(v)\cup \{v\})$, $S_2:=N_G(v)\setminus (N_G(u)\cup \{u\})$, and $S_3:=N_G(v)\cap N_G(u)$. 
See Figure~\ref{fig:pivoting} for an example. 
It is easy to verify that $G\wedge uv$ is identical to the graph obtained from $G$ by complementing the adjacency relations of vertices between distinct sets $S_i$ and $S_j$, and swapping the labels of the vertices $u$ and $v$. 
See \cite[Proposition 2.1]{Oum2004} for a formalized proof.

\begin{figure}\centering
\tikzstyle{v}=[circle, draw, solid, fill=black, inner sep=0pt, minimum width=2.5pt]
\tikzset{photon/.style={decorate, decoration={snake}}}
\begin{tikzpicture}[scale=0.05]

\node [v] (a) at (10,50) {};
\node [v] (b) at (30,50) {};
\node [v] (c) at (0,30) {};
\node [v] (d) at (10,15) {};
\node [v] (e) at (30,17) {};
\node [v] (f) at (40,35) {};
\node [v] (g) at (40,10) {};

\draw [rounded corners=5pt, thick] (-3,33) rectangle (13, 12);
\draw (5, 7) node{$S_1$};
\draw [rounded corners=3pt, thick] (27,20) rectangle (33, 14);
\draw (30, 9) node{$S_2$};
\draw [rounded corners=3pt, thick] (37,38) rectangle (43, 32);
\draw (47, 35) node{$S_3$};

\draw (c) -- (a) -- (b) -- (f);
\draw (d) -- (a);
\draw (e) -- (b);
\draw (f) -- (c) -- (d);
\draw (c)-- (e);
\draw (a)--(f);

\draw (d) -- (g) -- (f);

\draw (10,55) node{$u$};
\draw (30,55) node{$v$};

\end{tikzpicture}\qquad\quad
\begin{tikzpicture}[scale=0.05]

\node [v] (a) at (10,50) {};
\node [v] (b) at (30,50) {};
\node [v] (c) at (0,30) {};
\node [v] (d) at (10,15) {};
\node [v] (e) at (30,17) {};
\node [v] (f) at (40,35) {};
\node [v] (g) at (40,10) {};

\draw [rounded corners=5pt, thick] (-3,33) rectangle (13, 12);
\draw (5, 7) node{$S_1$};
\draw [rounded corners=3pt, thick] (27,20) rectangle (33, 14);
\draw (30, 9) node{$S_2$};
\draw [rounded corners=3pt, thick] (37,38) rectangle (43, 32);
\draw (47, 35) node{$S_3$};

\draw (c) -- (a) -- (b) -- (f);
\draw (d) -- (a);
\draw (e) -- (b);
\draw (c) -- (d);
\draw (a)--(f);

\draw (d) -- (e) --(f);
\draw (d) -- (f);

\draw (d) -- (g) -- (f);

\draw (10,55) node{$v$};
\draw (30,55) node{$u$};

\end{tikzpicture}\caption{Pivoting an edge $uv$.}\label{fig:pivoting}
\end{figure}
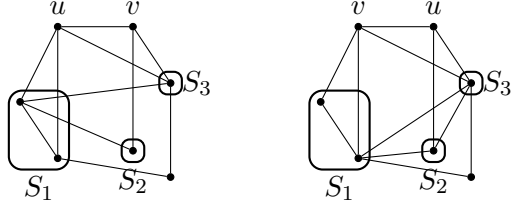

For a vertex $v$ of $G$ with exactly two neighbors $v_1$ and $v_2$, if $v_1$ and $v_2$ are non-adjacent, then the operation of replacing $G$ with $G*v\setminus v$ is called \emph{smoothing} a vertex $v$.
Smoothing a vertex $v$ is equivalent to removing $v$ and adding the edge between the two neighbors of $v$.

\subsection*{Leveling in a graph}

A sequence $L_0, L_1, \ldots, L_m$ of disjoint subsets of the vertex set of a graph $G$ is called a \emph{leveling} in $G$ if 
\begin{enumerate}
\item $\abs{L_0} = 1$, and
\item for each $i\in\{1, \ldots, m\}$, every vertex in $L_i$ has a neighbor in $L_{i-1}$, and has no neighbors in $L_j$ for all $j\in\{0, \ldots, i-2\}$.
\end{enumerate}
Each $L_i$ is called a \emph{level}. 
For $i\in\{1, \ldots, m\}$, a vertex $v\in L_{i-1}$ is called a \emph{parent} of a vertex $w\in L_i$ if $v$ and $w$ are adjacent in $G$.
For $u\in L_i$ and $v\in L_j$ where $0\le i \le j \le m$, 
$u$ is called an \emph{ancestor} of $v$ 
if there is a path between $u$ and $v$ of length $j-i$ with one vertex in each of $L_i, L_{i+1}, \ldots ,L_j$.

One natural way to obtain a leveling that covers all vertices in a graph is to fix a vertex $v$, and define $L_i$ as the set of all vertices at distance $i$ from $v$.

Our basic strategy to color a graph is to color each level of this leveling.
If each level can be colored with $N$ colors, then all levels can be colored with $2N$ colors, by using two disjoint sets of $N$ colors for even levels and odd levels. 
So, we may assume that some level has sufficiently large chromatic number.
The following theorem of Gy\'{a}rf\'{a}s~\cite{Gyarfas1987} implies that we may assume that a level contains a sufficiently long induced path, and this gives a starting point of proving Theorems~\ref{thm:mainfanvertexminor} and \ref{thm:mainpivotminor}.

\begin{theorem}[Gy\'{a}rf\'{a}s~\cite{Gyarfas1987}]\label{thm:gyarfas}
  If $k\ge 2$ and a graph $G$ has no induced path on $k$ vertices, then $\chi(G)\le(k-1)^{\omega(G)-1}$.
\end{theorem}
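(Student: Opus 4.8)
The plan is to prove this by induction on $\omega(G)$, where the inductive step is a decomposition of $V(G)$ into $k-1$ pieces of smaller clique number. The degenerate cases are immediate: if $k=2$ then $G$ is edgeless and $\chi(G)\le 1=(k-1)^{\omega(G)-1}$, and the same bound holds if $\omega(G)\le 1$. So assume $k\ge 3$ and $\omega(G)=\omega\ge 2$, and assume inductively that every $P_k$-free graph with clique number at most $\omega-1$ is $(k-1)^{\omega-2}$-colorable. Since $\chi$ and $\omega$ are each attained on a single connected component, we may assume $G$ is connected.

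The core reduction is: it suffices to partition $V(G)$ into $k-1$ sets $V_1,\dots,V_{k-1}$ with $\omega\bigl(G[V_i]\bigr)\le\omega-1$ for every $i$. Indeed, each $G[V_i]$ is $P_k$-free, so by the induction hypothesis $\chi\bigl(G[V_i]\bigr)\le(k-1)^{\omega-2}$; giving the $k-1$ pieces pairwise disjoint palettes of that size yields a proper coloring of $G$ (edges inside a piece are handled by that piece's coloring, edges between pieces by disjointness of palettes), using $(k-1)\cdot(k-1)^{\omega-2}=(k-1)^{\omega-1}$ colors. Equivalently, it suffices to produce a map $\phi\colon V(G)\to\{1,\dots,k-1\}$ under which no clique of size $\omega$ is monochromatic, and then take $V_i=\phi^{-1}(i)$.

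To build $\phi$, I would run a search of the connected graph $G$ from a fixed vertex $r$ and let $\phi(v)$ record how far an induced path of $G$ ``ending at $v$'' can be made to reach inside the portion of $G$ already explored when $v$ is first seen; since $G$ is $P_k$-free, such a path has at most $k-1$ vertices, so $\phi$ indeed lands in $\{1,\dots,k-1\}$. The thing to verify is that a clique $K$ of size $\omega$ cannot be monochromatic. Listing the vertices of $K$ in search order, a monochromatic $K$ would force each vertex of $K$ to interact with a witnessing induced path $Q$ in a restricted way: a vertex of $K$ lying on $Q$ must sit adjacent to the endpoint of $Q$, and a vertex of $K$ off $Q$ cannot be non-adjacent to all of $Q$, for otherwise $Q$ extends. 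A careful choice of which vertex of $K$ to attach to $Q$, and at which vertex of $Q$ to attach it, should then yield either a strictly longer induced path (contradicting that $\phi$ is constant on $K$) or a clique of size $\omega+1$ (contradicting $\omega(G)=\omega$).

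Making this dichotomy go through is where essentially all the difficulty lies, and is the step I expect to be the main obstacle: the witnessing paths must be kept long enough to be exploitable, which is what dictates the precise ``anchoring'' of the paths to the search (so the relevant subgraph at $v$ contains all vertices seen on the way to $v$) and the choice between a breadth-first and a depth-first exploration. Note this subsumes, at $\omega=2$, the statement that a triangle-free $P_k$-free graph is $(k-1)$-colorable, so even the base of the induction on $\omega$ is really contained in this construction; the outer induction on $\omega$ and the palette-disjointness bookkeeping are then routine.
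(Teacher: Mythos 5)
The paper itself does not prove Theorem~\ref{thm:gyarfas}; it is quoted from Gy\'arf\'as~\cite{Gyarfas1987}, so your argument has to stand on its own, and it does not: all of the content of the theorem is concentrated in the step you yourself flag as ``the main obstacle.'' Your outer reduction is fine --- if every connected $P_k$-free graph $G$ with $\omega(G)=\omega\ge 2$ admitted a partition $V(G)=V_1\cup\cdots\cup V_{k-1}$ with $\omega(G[V_i])\le\omega-1$ for all $i$, then induction on $\omega$ with disjoint palettes gives $(k-1)^{\omega-1}$. But that partition claim is not a routine auxiliary fact: at $\omega=2$ it is literally the assertion that every triangle-free $P_k$-free graph is properly $(k-1)$-colorable, i.e.\ the $\omega=2$ case of the theorem itself (as you note), and for larger $\omega$ it is at least as strong as what is to be proved. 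You give no proof of it. The labeling $\phi$ is not even pinned down (what ``how far an induced path ending at $v$ can be made to reach inside the explored portion'' means, and whether the exploration is breadth-first or depth-first, are left open), and the verification that no $\omega$-clique is monochromatic is precisely where the attempt stops: when a clique vertex $w$ is adjacent to interior vertices of the witnessing path $Q$ of an earlier clique vertex, you get neither a longer induced path nor a larger clique, and no mechanism is offered for producing one. As it stands, the proposal reduces the theorem to an unproven statement of essentially the same strength.

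For comparison, Gy\'arf\'as's argument needs no global partition into $k-1$ parts. It is an induction on $\omega$ run by contradiction along a ``Gy\'arf\'as path'': set $m=(k-1)^{\omega-2}$, assume $G$ is connected and $\chi(G)>(k-1)m$, and maintain an induced path $p_1p_2\cdots p_i$ together with a connected induced subgraph $C_i$ disjoint from the path such that $p_i$ has a neighbor in $C_i$, no $p_j$ with $j<i$ has one, and $\chi(C_i)\ge\chi(G)-im$ (up to routine bookkeeping). To extend, let $A=N(p_i)\cap V(C_i)$; since $A$ lies inside a neighborhood, $\omega(G[A])\le\omega-1$, so $\chi(G[A])\le m$ by the induction hypothesis, hence some component $C_{i+1}$ of $C_i-A$ satisfies $\chi(C_{i+1})\ge\chi(C_i)-m$, and connectivity of $C_i$ supplies $p_{i+1}\in A$ with a neighbor in $C_{i+1}$; the choice of the $C_j$'s guarantees $p_1\cdots p_{i+1}$ is still induced. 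After $k$ steps this yields an induced path on $k$ vertices, a contradiction. If you want to rescue your plan, you must actually prove your partition lemma, and the natural route to it is essentially this same neighborhood-peeling argument --- at which point the detour through the partition buys nothing.
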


\section{Coloring graphs without $F_k$ vertex-minors}
\label{sec:3}
We prove that every class of graphs excluding a fixed fan as a vertex-minor is $\chi$-bounded.

\begin{theorem}\label{thm:mainfanvertexminor}
 For each integer $k$, the class of graphs having no vertex-minor isomorphic to $F_k$ is $\chi$-bounded.
\end{theorem}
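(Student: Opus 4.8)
The plan is to follow the leveling strategy outlined in the preliminaries. Suppose $G$ has no vertex-minor isomorphic to $F_k$ and $\omega(G)\le q$; we want to bound $\chi(G)$ by a function of $q$ and $k$. As explained, it suffices to bound the chromatic number of a single level $L_i$ of a breadth-first leveling $L_0,L_1,\dots,L_m$ rooted at a vertex, since doubling the palette handles all levels at once. By Theorem~\ref{thm:gyarfas}, if $\chi(G[L_i])$ is large (as a function of $q$), then $G[L_i]$ contains a long induced path $P$; the exact length we need will be dictated by the later steps, so I would leave it as a parameter $\ell=\ell(q,k)$ to be fixed at the end.

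The heart of the argument is then: \emph{a long induced path sitting inside a level of a leveling yields an $F_k$ vertex-minor}. Here is how I would extract it. Let $P=p_1p_2\cdots p_\ell$ be the induced path in level $L_i$. Walk up the levels: each $p_j$ has an ancestor chain reaching $L_0$, and since all the $p_j$ sit at the same level, any two ancestor chains that meet must meet at a common vertex in some level $L_t$ with $t<i$. A standard pruning/averaging argument (choosing a subpath of $P$ and looking at the first level where ancestor paths of a chosen pair of endpoints coincide) lets me find two vertices $u,v$ on (a long subpath of) $P$ together with a common ancestor $w$, such that the ancestor path from $w$ to $u$ and from $w$ to $v$ are internally disjoint and together form an induced path $Q$ whose two endpoints are joined through $w$; crucially the interior of $P$ between $u$ and $v$ is still a long induced path, and I can control the adjacencies between $Q$ and this interior path (vertices in higher levels have no neighbors more than one level down). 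After smoothing the internal vertices of the two ancestor legs — smoothing is a vertex-minor operation, and smoothing a degree-2 vertex with non-adjacent neighbors just contracts the leg — I obtain a graph consisting of a long induced path together with one extra vertex adjacent to both of its ends (and possibly to some interior vertices). This reduces Theorem~\ref{thm:mainfanvertexminor} to the purely structural claim: \emph{an induced path of length $\ell$ plus one vertex adjacent to its two endpoints (and arbitrarily to the rest) contains $F_k$ as a vertex-minor}, which is handled by the dedicated lemmas the paper promises in Subsections~\ref{sec:pathwithmatching} and \ref{sec:patchedpath}; the idea there is that by local complementations at well-chosen path vertices one can ``clean up'' the apex's neighborhood into an interval of the path, after which an induced $F_k$ appears directly.

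Assembling the pieces: fix $k$; the structural lemma gives a threshold $\ell_0(k)$ such that a patched path of length $\ge\ell_0(k)$ contains an $F_k$ vertex-minor. The ancestor-pruning step needs the original induced path to have length about $\ell_0(k)$ times a factor depending on $q$ (to survive the passes that isolate a good pair and a common ancestor), giving $\ell=\ell(q,k)$. Then Theorem~\ref{thm:gyarfas} gives a constant $N=N(q,k)=(\ell-1)^{q-1}$ bounding the chromatic number of any $F_k$-vertex-minor-free graph with no induced $P_\ell$ and clique number $\le q$; hence every level has chromatic number $\le N$, and $\chi(G)\le 2N$. This is the $\chi$-bounding function.

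The step I expect to be the main obstacle is the middle one — turning ``long induced path in a level'' into ``patched path containing $F_k$ as a vertex-minor.'' Two things make it delicate: first, controlling the adjacency between the two ancestor legs and the interior path (they lie in different levels, which limits cross-edges, but one still has to argue the legs can be smoothed without destroying induced-ness of the long path); and second, the cleanup of the apex vertex's neighborhood on the path via local complementations, where one must ensure that forcing the neighborhood to be an interval does not accidentally introduce chords that would block an induced fan — this is exactly the content the paper isolates into the ``path with matching'' and ``patched path'' subsections, and it is where the real combinatorial work lies.
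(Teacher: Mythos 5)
Your overall frame (level the graph, use Theorem~\ref{thm:gyarfas} to find a long induced path inside one level, then convert that path into an $F_k$ vertex-minor) agrees with the paper, but the heart of your argument contains a genuine gap. You reduce the theorem to the claim that \emph{an induced path of length $\ell$ plus one extra vertex adjacent to its two endpoints (and possibly to other path vertices) contains $F_k$ as a vertex-minor}, and you assert that this is what Subsections~\ref{sec:pathwithmatching} and~\ref{sec:patchedpath} supply. It is not: those subsections prove Proposition~\ref{prop:ektofan} (the comb $E_\ell$ glued onto a connected graph contains an $F_k$ vertex-minor, via Lemma~\ref{lem:connected}, Theorem~\ref{thm:binary}, Erd\H{o}s--Szekeres and the ladder Lemma~\ref{lem:laddertofan}) and Proposition~\ref{prop:indepandpath} (extraction of an induced matching between a level and the path), and the paper's proof of Theorem~\ref{thm:mainfanvertexminor} uses \emph{two} levels above the path plus two Ramsey steps to build exactly that comb structure. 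Your claimed reduction target is unproved and is the crux: in your own construction the extra vertex is a common ancestor lying at least two levels above $L_n$, so it has \emph{no} neighbors on the path other than through the smoothed legs; the structure you actually obtain is just a long induced cycle. Nothing in the paper (nor in your sketch) shows that a long hole contains $F_k$ as a vertex-minor, and the entire design of Section~\ref{sec:3} indicates this cannot be taken for granted: if it were true, Theorem~\ref{thm:mainfanvertexminor} would follow at once from the long-holes theorem of Chudnovsky, Scott and Seymour cited in the introduction, and Propositions~\ref{prop:ektofan} and~\ref{prop:indepandpath} would be unnecessary. The ``path plus an apex adjacent to its two ends'' configuration is the engine of Section~\ref{sec:4} (Proposition~\ref{prop:pathtocycle}), where the target is a \emph{cycle pivot-minor}, not a fan; transplanting it to the fan theorem is where your proposal breaks.

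Two further points. First, your suggested repair --- ``clean up the apex's neighborhood into an interval by local complementations'' --- is not justified and does not help: when the apex sees only the two endpoints there is nothing to clean up, and in general local complementations at path vertices alter both the path and the adjacencies among the apex's neighbors, with no argument that inducedness survives. (The easy case, an apex with at least $k$ neighbors on the induced path, does give $F_k$ by smoothing, but that is exactly the case your construction cannot guarantee.) Second, even the ancestor-smoothing step is rougher than you state: the two legs can have chords between vertices in the same level, and their bottom vertices in $L_{n-1}$ may have many neighbors on $P$, so the legs are not made of degree-$2$ vertices with non-adjacent neighbors; the paper faces the analogous difficulties in the proof of Theorem~\ref{thm:mainpivotminor} and resolves them by pivoting. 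Those issues are repairable; the missing proof that the resulting one-apex structure (essentially a long hole) yields an $F_k$ vertex-minor is not, and that is the gap you would need to fill --- which is precisely what the comb/ladder machinery of Subsections~\ref{sec:pathwithmatching} and~\ref{sec:patchedpath}, applied across the two levels $L_{n-1}$ and $L_{n-2}$, was built to avoid.
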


\subsection{A structure containing a fan vertex-minor}\label{sec:pathwithmatching}

To show Theorem~\ref{thm:mainfanvertexminor}, we essentially prove that for a fixed $k$ and a graph $G$ with a leveling, 
if a level contains a sufficiently long induced path, then $G$ contains a vertex-minor isomorphic to $F_k$.
In this subsection, we introduce an intermediate structure having a vertex-minor isomorphic to $F_k$.

We will use the following two theorems.

\begin{theorem}[Erd\H{o}s and Szekeres~\cite{1935ErSz}]\label{thm:Erdoss1935}
Every sequence of $n^2+1$ integers contains an increasing or decreasing subsequence of length $n+1$.
\end{theorem}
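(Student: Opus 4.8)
The plan is to use the classical pigeonhole argument (due to Seidenberg). Write the sequence as $a_1, a_2, \ldots, a_{n^2+1}$, and to each index $i$ I would attach a pair of positive integers $(p_i, q_i)$, where $p_i$ is the maximum length of a non-decreasing subsequence ending with $a_i$, and $q_i$ is the maximum length of a non-increasing subsequence ending with $a_i$. Here I read ``increasing'' as non-decreasing and ``decreasing'' as non-increasing, which is the natural convention when the terms are integers that may repeat; if one insists on the strict versions, the same argument goes through after replacing each $a_i$ by the pair $(a_i,i)$ compared lexicographically, which breaks ties without creating spurious monotone patterns.

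The key step is to observe that the map $i \mapsto (p_i, q_i)$ is injective. Take indices $i < j$. Since $a_i$ and $a_j$ are integers, either $a_i \le a_j$ or $a_i \ge a_j$. In the first case any non-decreasing subsequence ending at $a_i$ extends by appending $a_j$, so $p_j \ge p_i + 1$; in the second case symmetrically $q_j \ge q_i + 1$. In either case $(p_i, q_i) \ne (p_j, q_j)$. Then I would finish by contradiction: if the sequence had neither a non-decreasing subsequence of length $n+1$ nor a non-increasing subsequence of length $n+1$, then $1 \le p_i \le n$ and $1 \le q_i \le n$ for every $i$, so the $n^2+1$ pairs $(p_i,q_i)$ would all lie in a set of only $n^2$ elements, contradicting injectivity.

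There is essentially no serious obstacle here; the only point that needs a word of care is pinning down the convention for ``increasing''/``decreasing'' in the presence of equal terms, as noted above. As an alternative packaging one could invoke Dilworth's (or Mirsky's) theorem for the partial order on indices defined by $i \preceq j$ if and only if $i \le j$ and $a_i \le a_j$, whose chains are the non-decreasing subsequences and whose antichains are the strictly decreasing ones, a poset with no chain and no antichain of size $n+1$ having at most $n^2$ elements; but the direct counting argument above is shorter and self-contained, so that is the route I would write down.
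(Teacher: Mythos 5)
Your proof is correct: the Seidenberg pigeonhole argument (assigning to each index the pair of longest monotone subsequence lengths ending there, showing the map is injective, and counting) is a complete and standard proof of the Erd\H{o}s--Szekeres theorem, and your remark about handling ties via lexicographic comparison properly addresses the only delicate point in the statement as phrased for integers. Note that the paper itself offers no proof of this statement---it is invoked as a known result with a citation to Erd\H{o}s and Szekeres---so there is no internal argument to compare against; your self-contained proof is exactly the kind of argument the citation is standing in for, and nothing further is needed.
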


\begin{theorem}[folklore; see Diestel~{\cite{Diestel2010}}]\label{thm:binary}
  For $k\ge 1$ and $\ell\ge 3$,  
  every connected graph on at least $k^{\ell-2}+1$ vertices contains 
  a vertex of degree at least $k$ or an induced path on $\ell$ vertices.
\end{theorem}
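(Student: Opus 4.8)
The plan is to prove the contrapositive in the following sharpened form: if $G$ is a connected graph with maximum degree at most $k-1$ and $G$ has no induced path on $\ell$ vertices, then $\abs{V(G)}\le k^{\ell-2}$. This immediately yields the theorem, since a graph violating both conclusions would be exactly such a $G$ with more than $k^{\ell-2}$ vertices.

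First I would fix an arbitrary vertex $v$ and consider the distance levels $L_0=\{v\}, L_1, L_2, \ldots$, where $L_i$ is the set of vertices at distance exactly $i$ from $v$; since $G$ is connected, these cover $V(G)$. The first key point is that if $L_{\ell-1}\ne\emptyset$ then $G$ contains an induced path on $\ell$ vertices: for $w\in L_{\ell-1}$ take a shortest $v$--$w$ path $v=u_0,u_1,\ldots,u_{\ell-1}=w$, and observe that the initial segment $u_0u_1\cdots u_{\ell-1}$ is induced, because a chord $u_iu_j$ with $j\ge i+2$ would force $\operatorname{dist}(v,u_j)\le i+1<j$, contradicting that the path is shortest. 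Hence under our hypotheses only $L_0,\ldots,L_{\ell-2}$ are nonempty.

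Next I would bound the level sizes. We have $\abs{L_0}=1$, and for each $i$ every vertex of $L_{i+1}$ has a neighbour in $L_i$, so the number of edges between $L_i$ and $L_{i+1}$ is at least $\abs{L_{i+1}}$ and at most $(k-1)\abs{L_i}$ because each vertex has degree at most $k-1$; thus $\abs{L_{i+1}}\le(k-1)\abs{L_i}$ and therefore $\abs{L_i}\le(k-1)^i$. Summing, $\abs{V(G)}\le\sum_{i=0}^{\ell-2}(k-1)^i$. It then remains to verify the elementary inequality $\sum_{i=0}^{\ell-2}(k-1)^i\le k^{\ell-2}$, which I would prove by induction on $\ell\ge 3$: for $\ell=3$ both sides equal $k$, and if $\sum_{i=0}^{m}(k-1)^i\le k^m$ then $\sum_{i=0}^{m+1}(k-1)^i = 1+(k-1)\sum_{i=0}^{m}(k-1)^i \le 1+(k-1)k^m \le k^{m+1}$ since $1\le k^m$. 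This completes the proof.

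There is no real obstacle here, as the result is folklore; the only points needing care are the observation that an initial segment of a shortest path is induced, and the bookkeeping of the geometric-type sum, including the degenerate cases $k\le 2$, where a connected graph of maximum degree at most $k-1$ has at most two vertices and the bound $k^{\ell-2}\ge 1$ is trivial. An essentially equivalent alternative is induction on $\ell$ directly: the base case $\ell=3$ uses that a connected graph with no induced $P_3$ is a clique, hence has a vertex of degree $\abs{V(G)}-1\ge k$, and the inductive step peels off a distance level; I would present whichever version is shorter, but the layering computation above seems cleanest.
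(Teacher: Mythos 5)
Your proof is correct: the BFS-layering argument (shortest paths are induced, so all vertices lie in levels $L_0,\ldots,L_{\ell-2}$, and $\abs{L_i}\le (k-1)^i$ under maximum degree $k-1$, with $\sum_{i=0}^{\ell-2}(k-1)^i\le k^{\ell-2}$) is exactly the standard folklore proof. The paper itself states this theorem without proof, citing Diestel, so there is no in-paper argument to compare against; your write-up would serve as a complete and correct justification.
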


For $k\geq 2$, let $E_k$ be a graph on $3k$ vertices constructed in the following way: start with the disjoint union of $k$ $2$-edge paths $P_1, \ldots, P_k$ having $v_1,\ldots,v_k$ as an end vertex, respectively and  then add $k-1$ edges that make the graph induced on $\{v_1, \ldots, v_k\}$ a path (of length $k-1$). 
Note that $E_k$ is a tree with $k$ vertices of degree $1$, $k+2$ vertices of degree $2$, and $k-2$ vertices of degree $3$.

\begin{proposition}\label{prop:ektofan}
Let $k$ be a positive integer and let $\ell\ge R(k,k)^{2(k-1)^2-1}+1$. Let $H$ be a connected graph with at least $\ell$ vertices. 
Then the graph obtained from the disjoint union of $H$ and $E_\ell$ by identifying $\ell$ distinct vertices of $H$ with the leaves of $E_\ell$ contains a vertex-minor isomorphic to $F_k$.
\end{proposition}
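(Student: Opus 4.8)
The plan is to perform local complementations and vertex deletions so as to cut $G$ down to a transparent skeleton in which $F_k$ is visible. I would first record the elementary fact that drives everything: if a graph $G'$ has an induced subgraph consisting of an induced path $Q$ and a vertex $z\notin V(Q)$ with at least $k$ neighbors on $Q$, then $G'$ has a vertex-minor isomorphic to $F_k$. Indeed, delete all vertices outside $\{z\}\cup V(Q')$, where $Q'$ is the subpath of $Q$ between the first and the last neighbor of $z$ on $Q$; in the resulting graph every internal vertex of $Q'$ not adjacent to $z$ has degree exactly $2$ with non-adjacent neighbors, so all of them can be smoothed, and this turns $Q'$ into an induced path all of whose vertices are adjacent to $z$, i.e.\ a fan on at least $k$ path-vertices. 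Hence it suffices to exhibit, after some vertex-minor operations, an induced \emph{spider} in $G$: a centre $z$ together with $k$ internally disjoint induced paths (the \emph{legs}) from $z$ whose far endpoints $b_1,\dots,b_k$ induce a path $b_1-\cdots-b_k$ and have no further adjacencies within the configuration. Smoothing each leg down to a single edge $zb_i$ — again each smoothed vertex has degree $2$ with non-adjacent neighbors — produces precisely the situation above.

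The natural place to find the path $b_1-\cdots-b_k$ is the central path $v_1-v_2-\cdots-v_\ell$ of $E_\ell$, which is already an induced path of $G$, and along which each $v_i$ carries the length-two tail $v_i-x_i-w_i$ into $H$. After deleting the vertices $x_q$ at unused positions and smoothing the resulting degree-$2$ vertices $v_q$, any chosen positions $m_1<\cdots<m_k$ yield an induced path $v_{m_1}-\cdots-v_{m_k}$ whose members still carry their tails $v_{m_i}-x_{m_i}-w_{m_i}$. Thus the task reduces to joining $w_{m_1},\dots,w_{m_k}$ to a common vertex by internally disjoint induced paths inside $H$ that create no edges between distinct tails.

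To produce such a system I would use the structure of $H$. Since $H$ is connected with $\abs{V(H)}\ge\ell=R(k,k)^{2(k-1)^2-1}+1$, Theorem~\ref{thm:binary}, applied with parameters $R(k,k)$ and $2(k-1)^2+1$, yields that $H$ has a vertex of degree at least $R(k,k)$ or an induced path on $2(k-1)^2+1$ vertices. In the first case I would apply Ramsey's theorem to the neighborhood of the hub to get $k$ of its neighbors forming a clique or an independent set in $H$, turn a clique into an independent set by one local complementation at the hub, and then route from these $k$ vertices through $H$ to $k$ special vertices lying in distinct components of $H$ minus the hub, so the routing paths meet only at the hub; together with the tails and the central path of $E_\ell$ this is the required induced spider once the rest of $H$ is deleted. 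In the second case I would first locally complement at (and delete) the special vertices lying on the long induced path — each such step ``heals'' the path and reattaches the corresponding $x$-vertex to two consecutive vertices of the shortened path — and then organise these reattached vertices, the tails, and the central path of $E_\ell$ into a centred spider, using Erd\H{o}s--Szekeres (Theorem~\ref{thm:Erdoss1935}) to pass to $k$ special vertices whose $E_\ell$-positions are monotone along the path, which is what lets the central path of $E_\ell$ supply an induced path of leaf-ends. The size of the bound $R(k,k)^{2(k-1)^2-1}+1$ is exactly what these nested Ramsey/Erd\H{o}s--Szekeres/branching arguments consume.

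The main obstacle, and the only place where genuine care is needed, is \emph{inducedness}: $H$ may have many edges between the branches of the hub, or chords on the long induced path, and we do not control which $E_\ell$-positions the reachable special vertices occupy. Taming this — choosing the legs as shortest paths in suitable subgraphs of $H$, and using Ramsey-type extraction both to destroy unwanted adjacencies and to align positions — is the heart of the argument; once a genuinely induced centred spider with its leaf-ends strung along a path has been found, the local-complementation bookkeeping that produces $F_k$ is routine.
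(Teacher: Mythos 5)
Your opening reduction (a vertex with at least $k$ neighbors on an induced path yields an $F_k$ vertex-minor, hence it suffices to build an induced subdivision of $F_k$ after some local complementations) is fine, and your use of Theorem~\ref{thm:binary} to split into a high-degree case and a long-induced-path case matches the paper. But there are two genuine gaps. First, you never perform the paper's key preliminary reduction (Lemma~\ref{lem:connected}): repeatedly deleting or locally complementing-and-deleting the vertices of $H$ that are \emph{not} identified with leaves of $E_\ell$, so that one may assume $V(H)=\{v_1,\dots,v_\ell\}$. Without this, the high-degree vertex or the long induced path produced by Theorem~\ref{thm:binary} may consist largely of anonymous vertices of $H$ carrying no tails into $E_\ell$, and your fix --- routing internally disjoint induced legs from the hub's neighbors ``to $k$ special vertices lying in distinct components of $H$ minus the hub'' --- is unjustified: $H$ minus the hub may be connected, and nothing guarantees such a routing with no chords or cross-edges exists. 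You yourself flag the inducedness of these legs as ``the heart of the argument,'' but that is precisely the part you do not prove; in the paper this difficulty never arises, because after the reduction the hub's neighbors are themselves identified vertices and the legs are single edges.

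Second, the long-induced-path case is not correct as sketched. After your healing/reattaching steps, the available configuration is (as in the paper) a subdivided \emph{ladder}: the induced path inside $H$ on one side, the main path of $E_\ell$ on the other, rungs through the $x$-vertices. Every vertex of this structure has degree at most $3$, whereas an induced ``centred spider'' (an induced subdivision of $F_k$) needs a vertex of degree $k$; Erd\H{o}s--Szekeres (Theorem~\ref{thm:Erdoss1935}) only aligns positions and cannot manufacture such a centre. The paper needs a separate, non-trivial pivoting argument (Lemma~\ref{lem:laddertofan}), which by an inductive sequence of pivots along one side of the $1$-subdivided ladder accumulates all rung-middles into the neighborhood of a single vertex, to extract $F_k$; your proposal contains no counterpart of this step, so the path case is essentially unproven. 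In short, the two ingredients your outline treats as routine bookkeeping --- the reduction of $H$ to the identified vertices and the ladder-to-fan pivoting lemma --- are exactly the substance of the paper's proof, and without them the argument does not go through.
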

 See Figure~\ref{fig:examplecomb} for an illustration of a graph described in Proposition~\ref{prop:ektofan}.

\begin{figure}
  \centering
  \tikzstyle{v}=[circle, draw, solid, fill=black, inner sep=0pt, minimum width=3pt]
      \begin{tikzpicture}[scale=0.8]
    \foreach \x in {1,...,6}
    {
    }
        \foreach \x in {0,...,2}
    {
      \node (p1\x) at (1,\x*1.2) [v] {};
    }
        \foreach \x in {0,...,2}
    {
      \node (p2\x) at (2,\x*1.2) [v] {};
    }
        \foreach \x in {0,...,2}
    {
      \node (p3\x) at (3,\x*1.2) [v] {};
    }
        \foreach \x in {0,...,2}
    {
      \node (p4\x) at (4,\x*1.2) [v] {};
    }
        \foreach \x in {0,...,2}
    {
      \node (p5\x) at (5,\x*1.2) [v] {};
    }
        \foreach \x in {0,...,2}
    {
      \node (p6\x) at (6,\x*1.2) [v] {};
    }

    \draw[very thick] (p10)--(p12);
    \draw[very thick] (p20)--(p22);
    \draw[very thick] (p30)--(p32);
    \draw[very thick] (p40)--(p42);
    \draw[very thick] (p50)--(p52);
    \draw[very thick] (p60)--(p62);
  \draw[very thick] (p10)--(p60);
 	\draw [rounded corners=2pt, very thick] (1-.2,2.2) rectangle (6+.2, 4.5);

  \end{tikzpicture}
  \caption{A graph obtained from $E_{6}$ and a connected graph by identifying $6$ vertices.}
  \label{fig:examplecomb}
\end{figure}
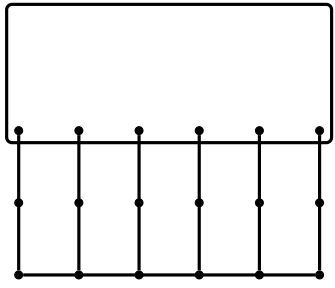

We first observe that for every connected graph $H$ and a vertex $v$ in $H$, either $H\setminus v$ or $H*v\setminus v$ is connected.
This allows us to reduce $H$ into a graph on exactly $\ell$ vertices.

\begin{lemma}\label{lem:connected}
Let $H$ be a connected graph with at least $2$ vertices. 
For each vertex $v$ of $H$, either $H\setminus v$ or $H*v\setminus v$ is connected. 
\end{lemma}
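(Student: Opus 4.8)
The plan is to prove Lemma~\ref{lem:connected} by a direct case analysis based on whether $v$ is a cut vertex of $H$. The key observation is that local complementation at $v$ only alters adjacencies within $N_H(v)$, so it does not affect connectivity of $H \setminus v$ at all; the statement is therefore interesting only when $H \setminus v$ is disconnected, i.e.\ when $v$ is a cut vertex.

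So first I would dispose of the easy case: if $H \setminus v$ is connected, we are done immediately. Now suppose $H \setminus v$ is disconnected, with connected components $C_1, \dots, C_t$ for some $t \ge 2$. Since $H$ is connected, $v$ has at least one neighbor in each $C_i$. The plan is to show that $H * v \setminus v$ is connected. The crucial point is that in $H * v$, the neighborhood $N_H(v)$ becomes a set on which adjacency is complemented; in particular, for any two vertices $a \in C_i \cap N_H(v)$ and $b \in C_j \cap N_H(v)$ with $i \ne j$, the pair $ab$ was a non-edge of $H$ (they lie in different components of $H \setminus v$), hence becomes an edge of $H * v$. Thus in $H * v \setminus v$, the sets $C_i \cap N_H(v)$ are pairwise completely joined. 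Picking one vertex $a_i \in C_i \cap N_H(v)$ for each $i$ (possible since $v$ has a neighbor in each component), the vertices $a_1, \dots, a_t$ form a clique in $H * v \setminus v$. Since each $C_i$ remains internally connected in $H * v \setminus v$ (local complementation at $v$ changes no edges with both endpoints outside $N_H(v)$, and changes no edge incident to $v$'s non-neighbors, but it can only complement edges within $C_i$ among vertices of $N_H(v) \cap C_i$ — still, $C_i$ was connected to begin with and $a_i$ is reachable within... here I need to be slightly careful). Let me restate: within a single component $C_i$, after applying $*v$, the induced subgraph on $V(C_i)$ may change, but I claim $H*v[V(C_i)]$ is still connected. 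Indeed $H[V(C_i)]$ is connected; and $H*v[V(C_i)]$ is obtained from it by local complementation at... no, $v \notin C_i$. Rather, $H*v[V(C_i)]$ differs from $H[V(C_i)]$ only by complementing adjacencies within $N_H(v) \cap V(C_i)$. So I should actually verify connectivity survives this.

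Here is the cleaner route I would take for that last point, which I expect to be the main (minor) obstacle: instead of tracking each $C_i$ separately, observe that it suffices to show every vertex $w$ of $H * v \setminus v$ is connected to some $a_i$. Fix the component $C_i$ containing $w$. In $H$, there is a path from $w$ to $v$; let $w'$ be the second-to-last vertex on this path, so $w' \in N_H(v) \cap V(C_i)$ and there is a $w$--$w'$ path inside $H[V(C_i)]$ avoiding $v$. Now take this path in $H$ and consider it in $H * v$: if it uses an edge that got deleted by the local complementation (both endpoints in $N_H(v)$, and they were adjacent), replace that single edge by the length-two detour through $v$? No — $v$ is deleted. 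Better: since we only need $w$ connected to the clique $\{a_1,\dots,a_t\}$, and $w'$ itself lies in $N_H(v)$, in $H * v \setminus v$ either $w'$ is still adjacent to enough of its old neighbors, or — the cleanest fix — simply note that $w' \in N_H(v) \cap V(C_i)$ is adjacent in $H*v$ to $a_j$ for $j \ne i$ (completely joined across components), so $w'$ reaches the clique. Thus I only need $w$ connected to $w'$ in $H * v \setminus v$, i.e.\ that $H*v[V(C_i)]$ is connected. To get this I would argue: among all vertices of $C_i$ that are reachable from $a_i$ in $H*v[V(C_i)]$, if this set is not all of $V(C_i)$, take an edge of $H[V(C_i)]$ crossing the boundary; such an edge must have been deleted by $*v$, so both its endpoints are in $N_H(v) \cap V(C_i)$ and were adjacent in $H$ — but then in $H*v$ they are non-adjacent, yet both are adjacent to, say, $a_j$ ($j \ne i$), contradicting that one is reachable from $a_i$ and the other is not, since both are joined to $a_j$ which is joined to $a_i$. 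This contradiction shows $H*v[V(C_i)]$ is connected, and combining over all $i$ with the clique on $\{a_1,\dots,a_t\}$ gives that $H * v \setminus v$ is connected, completing the proof.
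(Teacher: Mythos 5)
Your overall strategy is sound, but the final step, as written, does not work. The intermediate claim that $(H*v)[V(C_i)]$ is connected is false in general: take $H$ with vertex set $\{v,a,b,c\}$ and edges $va,vb,vc,ab$. Then $H\setminus v$ has components $C_1=\{a,b\}$ and $C_2=\{c\}$, and local complementation at $v$ deletes the edge $ab$, so $(H*v)[V(C_1)]$ is two isolated vertices. Correspondingly, the ``contradiction'' you derive is not one: the connection you exhibit between the two endpoints of the deleted boundary edge goes through $a_j\in C_j$, a vertex outside $V(C_i)$, so it says nothing about reachability inside the induced subgraph $(H*v)[V(C_i)]$, which is where you defined your reachable set. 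In the example, $b$ really is unreachable from $a$ inside $(H*v)[\{a,b\}]$ even though both are adjacent to $c$ in $H*v$.

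The repair is small and uses only ideas you already have: run the reachability argument in $H*v\setminus v$ itself rather than inside one component. Let $R$ be the set of vertices reachable in $H*v\setminus v$ from the clique $\{a_1,\dots,a_t\}$. If some $w\in C_i$ lies outside $R$, a path in $H[V(C_i)]$ from $a_i$ to $w$ contains an edge $xy$ with $x\in R$, $y\notin R$; this edge must have been complemented away, so $x,y\in N_H(v)\cap V(C_i)$, but then $y$ is adjacent in $H*v$ to $a_j$ for any $j\ne i$, giving $y\in R$, a contradiction. With that change your proof is correct, and it is genuinely different from the paper's, which is a two-line argument: if $H[N_H(v)]$ is connected then $H\setminus v$ is connected (every vertex reaches $N_H(v)$ by truncating a path to $v$), and otherwise $H[N_H(v)]$ is disconnected, so its complement $(H*v)[N_H(v)]$ is connected and the same truncation argument shows $H*v\setminus v$ is connected. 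The paper's route replaces your case analysis on components by the single observation that the complement of a disconnected graph is connected; yours is more hands-on but needs the global-reachability bookkeeping above to close correctly.
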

\begin{proof}
If $H[N_H(v)]$ is connected, then $H\setminus v$ is connected trivially. 
Otherwise, $(H*v)[N_H(v)]$ is connected and therefore 
$H*v\setminus v$ is connected.
\end{proof}

This implies that in Proposition~\ref{prop:ektofan}, if $H$ contains a vertex $v$ that will not be identified with a leaf of $E_\ell$, then we can reduce $H$ into one of $H\setminus v$ or $H*v\setminus v$, which is connected. 
In the end, we may assume that $H$ is a connected graph on the vertex set $\{v_1, \ldots, v_{\ell}\}$.
We now aim to obtain a fan vertex-minor in either case, by using Theorem~\ref{thm:binary}, which says that every sufficiently large connected graph contains a vertex of large degree or a long induced path, 

The following lemma proves the case when $H$ contains a long induced path.
For a positive integer $t$, the \emph{ladder} of order $t$ is a graph $G$ that consists of two vertex-disjoint paths $P = p_1p_2 \cdots p_t$, $Q=q_1q_2 \cdots q_t$ such that
\begin{itemize}
\item $V(G)=V(P) \cup V(Q)$, and
\item for each $i, j\in\{1, \ldots, t\}$, $p_iq_j\in E(G)$ if and only if $i=j$.
\end{itemize}
The \emph{$1$-subdivision} of a graph $G$ is the graph obtained from $G$ by replacing each edge by a $2$-edge path.

\begin{lemma}\label{lem:laddertofan}
The $1$-subdivision of the ladder of order $k$ contains a vertex-minor isomorphic to $F_k$.
\end{lemma}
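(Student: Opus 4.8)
The plan is to exhibit an explicit sequence of local complementations and vertex deletions (in fact only smoothings and one local complementation, plus deletions) that turns the $1$-subdivision of the ladder of order $k$ into the fan $F_k$. Write the ladder of order $k$ with rails $P=p_1\cdots p_k$ and $Q=q_1\cdots q_k$ and rungs $p_iq_i$; in the $1$-subdivision, insert a degree-$2$ vertex $r_i$ on each rung edge $p_iq_i$, a vertex $a_i$ on each rail edge $p_ip_{i+1}$ of $P$, and a vertex $b_i$ on each rail edge $q_iq_{i+1}$ of $Q$. The target $F_k$ should have the path $p_1p_2\cdots p_k$ as its induced spine and the apex adjacent to all of $p_1,\dots,p_k$; the natural candidate for the apex is a single vertex of the $Q$-side, so the strategy is to contract the whole $Q$-rail down to one vertex while keeping the $k$ rungs intact, and to contract the subdivided $P$-rail back to a genuine induced path.

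The key steps, in order: (1) First smooth all the rail-subdivision vertices $a_i$ on $P$; each $a_i$ has exactly two neighbors $p_i,p_{i+1}$ which are non-adjacent in the subdivision, so smoothing is legal and restores $P=p_1\cdots p_k$ as an induced path. (2) Next handle the $Q$-side and the rungs. Smoothing the $b_i$'s restores $Q=q_1\cdots q_k$; then we want to identify all the $q_i$ into one vertex while the rung-paths $p_i r_i q_i$ survive. Concretely, repeatedly pivot along a $Q$-rail edge $q_iq_{i+1}$ (using the formula $G\wedge uv$, which complements adjacencies between the three sets $S_1,S_2,S_3$ and swaps $u,v$) and then delete one endpoint; after pivoting $q_1q_2$, the neighbor $r_1$ of $q_1$ becomes adjacent to $q_2$'s neighborhood (in particular to $r_2$ and $q_3$), so after deleting $q_1$ the vertex $r_1$ has moved "up" the rail. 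Iterating this process folds $r_1,r_2,\dots,r_{k-1}$ and $q_k$ together into a common neighborhood, and a final local complementation at $q_k$ cleans up the adjacencies among the $r_i$'s; deleting all but one of these vertices leaves a single vertex $v$ adjacent to exactly the paths leading back to $p_1,\dots,p_k$. (3) Finally each surviving rung is a path $p_i r_i v$ with $r_i$ of degree $2$ and its two neighbors non-adjacent, so smoothing each $r_i$ makes $v$ adjacent to every $p_i$, and we have $F_k$. One then checks that the $p_i$'s still induce a path and that $v$ is non-adjacent to nothing it shouldn't be.

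The main obstacle will be bookkeeping the adjacencies during step (2): pivoting is a global operation on the three neighborhood classes, so I have to verify carefully that folding the $Q$-rail does not accidentally create chords among $p_1,\dots,p_k$ or among the $r_i$'s that cannot be removed, and that the parity/order in which the pivots and the final local complementation are applied actually yields the clean "star of paths" configuration rather than something with extra edges. It is cleanest to prove this by induction on $k$: the $1$-subdivided ladder of order $k$ contains (after one round of pivot-and-delete on the last rung) an induced subgraph that is essentially the $1$-subdivided ladder of order $k-1$ with one pendant $2$-edge path attached to $q_{k-1}$, and an $F_{k-1}$ vertex-minor of that extends to an $F_k$ vertex-minor of the whole graph by smoothing. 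I expect the base case $k=2$ (or $k=3$) to be a direct finite check, and the inductive step to reduce exactly to the local picture of pivoting a single rail edge, which is the only place real verification is needed.
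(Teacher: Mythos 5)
Your overall strategy is the mirror image of the paper's: fold one rail of the subdivided ladder by pivoting so that a single vertex accumulates adjacency to all rung-subdivision vertices, then delete the folded rail and smooth. However, as written there is a genuine gap at exactly the point you flag: step (2) is never actually verified, and the specific operations you propose do not behave as described. Once you smooth the rail-subdivision vertices $b_i$ and pivot the rail edge $q_1q_2$, the relevant sets are $S_1=\{r_1\}$ and $S_2=\{r_2,q_3\}$, so the pivot creates the edges $r_1r_2$ and $r_1q_3$; in particular the rung-subdivision vertices immediately become adjacent to one another, and iterating produces a growing tangle of such edges. These edges are fatal for your step (3), since smoothing $r_i$ requires $r_i$ to have exactly two non-adjacent neighbours, and the claim that one final local complementation at $q_k$ ``cleans up'' all accumulated $r_ir_j$ edges is unsupported: a local complementation only toggles pairs inside the current neighbourhood of $q_k$, and you have not shown that all the bad pairs lie there, let alone with the right parity. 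Your fallback induction is also too weak as stated: knowing only that the smaller configuration has \emph{some} vertex-minor isomorphic to $F_{k-1}$ does not let you ``extend by smoothing,'' because you have no control over where the apex and the ends of the spine of that $F_{k-1}$ sit relative to the pendant $2$-edge path; you would need an explicit inductive statement about which adjacencies are present after a prescribed pivot sequence.

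The paper's proof avoids all of this by folding \emph{before} any smoothing and by pivoting the pairs $p_jv_{p_jp_{j+1}}$, i.e.\ a rail vertex together with its rail-subdivision vertex. Because the subdivision vertex has only two neighbours, each such pivot cleanly transfers the accumulated adjacencies to the next rail vertex, and the proof tracks this with the explicit claim that in $G\wedge p_1v_{p_1p_2}\wedge\cdots\wedge p_jv_{p_jp_{j+1}}$ the vertex $p_{j+1}$ is adjacent to $v_{p_iq_i}$ for all $i\le j+1$. Moreover, since only pivots are used up to that point, the graph stays bipartite, so the vertices $v_{p_iq_i}$ remain pairwise non-adjacent for free --- precisely the property your route destroys by smoothing first. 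To salvage your plan you would essentially have to adopt this order of operations and prove such an explicit adjacency claim by induction, rather than deferring the bookkeeping, which is the only substantive content of the lemma.
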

\begin{proof}
Let $H$ be the ladder of order $k$  with two vertex-disjoint paths $P = p_1p_2 \cdots p_k$ and $Q=q_1q_2 \cdots q_k$ such that 
for each $i, j\in\{1, \ldots, k\}$, $p_iq_j\in E(G)$ if and only if $i=j$.
Let $G$ be the $1$-subdivision of $H$, and let $v_{xy}$ be the degree-$2$ vertex adjacent to $x$ and $y$ in $G$ for each edge $xy$ of $H$.
We claim that for each $1\le j\le k-1$, the vertex $p_{j+1}$ is adjacent to $v_{p_iq_i}$ for all $1\le i\le j+1$ in the graph 
\[G\wedge p_1v_{p_1p_2} \wedge \cdots \wedge p_jv_{p_jp_{j+1}}.\]
It is easy to observe that this is true when $j=1$. 
Suppose $j\ge 2$. 
By the induction hypothesis, $p_{j}$ is adjacent to $v_{p_iq_i}$ for all $1\le i\le j$ in the graph 
$G\wedge p_1v_{p_1p_2} \wedge \cdots \wedge p_{j-1}v_{p_{j-1}p_{j}}$.
Note that $v_{p_jp_{j+1}}$ still has two neighbors $p_j$ and $p_{j+1}$ in the graph $G\wedge p_1v_{p_1p_2} \wedge \cdots \wedge p_{j-1}v_{p_{j-1}p_{j}}$
because it is adjacent to no vertex of $\{p_1, v_{p_1p_2}, \ldots, p_{j-1}, v_{p_{j-1}p_j}\}$ in $G$ and thus, it was not affected by the previous pivotings.
By the definition of pivoting, $p_{j+1}$ becomes adjacent to $v_{p_iq_i}$ for all $1\le i\le j+1$ in 
\[(G\wedge p_1v_{p_1p_2} \wedge \cdots \wedge p_{j-1}v_{p_{j-1}p_{j}})\wedge p_jv_{p_jp_{j+1}}.\]

By the above claim, $p_k$ is adjacent to $v_{p_iq_i}$ for all $1\le i\le k$ in 
$G\wedge p_1v_{p_1p_2} \wedge \cdots \wedge p_{k-1}v_{p_{k-1}p_k}$.
Note that there are no edges between the vertices of $\{v_{p_iq_i}:1\le i\le k\}$ as this graph is bipartite.
Therefore, by removing all vertices in $\{p_1, v_{p_1p_2}, \ldots, p_{k-1}, v_{p_{k-1}p_k}\}$
and smoothing all degree-$2$ vertices in the remaining graph, 
we obtain a vertex-minor isomorphic to $F_k$.
\end{proof}

\begin{proof}[Proof of Proposition~\ref{prop:ektofan}]
Let $w_1,\ldots,w_\ell$ be the leaves of $E_\ell$ in the order following the main path. 
For all $i\in\{1, \ldots, \ell\}$, let $x_i$ be the neighbor of $w_i$ in $E_{\ell}$ and let $y_i$ be the neighbor of $x_i$ other than $w_i$.
Let $v_1,\ldots,v_\ell$ be the vertices of $H$ to be identified with $w_1,\ldots,w_\ell$, respectively.
Let $G$ be the graph obtained from the disjoint union of $H$ and $E_\ell$ by identifying $v_i$ and $w_i$ for each $i$.

Suppose there is a vertex $v$ in $H$ other than $v_1, \ldots, v_{\ell}$. 
By Lemma~\ref{lem:connected}, either $H\setminus v$ or $H*v\setminus v$ is connected.
Since applying local complementation at $v$ in $G$ does not change adjacency with a vertex in $V(E_{\ell})\setminus \{w_1, \ldots, w_{\ell}\}$, 
we can reduce $G$ to one of $G\setminus v$ or $G*v\setminus v$.  
By this observation, we may assume that $H$ is a connected graph on the vertex set $\{v_1, \ldots, v_{\ell}\}$.

Since $\ell\ge R(k,k)^{2(k-1)^2-1}+1$, by Theorem~\ref{thm:binary}, $H$ contains a vertex of degree at least $R(k,k)$, or an induced path on $2(k-1)^2+1$ vertices.

\medskip
\textbf{Case 1}: $H$ has an induced path $v_{i_1}v_{i_2} \ldots v_{i_{2(k-1)^2+1}}$. 

By Theorem~\ref{thm:Erdoss1935}, $i_1, i_2,\ldots, i_{2(k-1)^2+1}$ contains an increasing or decreasing subsequence $j_1, j_2,\ldots, j_k$, where
all of $j_1, \ldots, j_k$ have the same parity.
We may assume $j_1<j_2<\cdots<j_k$ by relabeling the indices if necessary and let $j_1=i_p$ and $j_k=i_q$. 
Now, the graph induced on 
\[\{w_z: z\in\{i_p, i_{p+1},\ldots, i_q\}\}\cup \{x_{z}: z\in \{j_1, j_2, \ldots, j_k\}\}\cup \{y_{z}: z\in\{j_1, j_1+1,\ldots, j_k\}\}\] is a subdivision of a ladder of order $k$, where each edge of the ladder is subdivided at least once. 
We apply local complementations to degree-$2$ vertices to transform this graph into the $1$-subdivision of the ladder of order $k$.
By Lemma~\ref{lem:laddertofan}, it contains a vertex-minor isomorphic to $F_k$.

\medskip
\textbf{Case 2}: $H$ has a vertex $v_s$ of degree at least $R(k,k)$. 

Using Ramsey's Theorem on $N_H(v_s)$, we get either a clique of size $k$ or an independent set of size $k$. 
If there is an independent set $\{v_{i_1}, \ldots, v_{i_k}\}$ in $N_H(v_s)$ where $i_1<i_2<\cdots<i_k$, then the graph induced on $\{v_s\}\cup\{v_{i_z}, x_{i_z}: z\in\{1, \ldots, k\}\}\cup \{y_{z}:z\in\{i_1, i_1+1, \ldots, i_k\}\}$ is a subdivision of $F_k$. Thus, it contains a vertex-minor isomorphic to $F_k$. 
If there is a clique $\{v_{i_1}, \ldots, v_{i_k}\}$ in $N_H(v_s)$ where $i_1<i_2<\cdots<i_k$, then first apply local complementation at $v_s$ to change $\{v_{i_1}, \ldots, v_{i_k}\}$ into an independent set. 
Similar to above, the graph induced on $\{v_s\}\cup\{v_{i_z}, x_{i_z}: z\in\{1, \ldots, k\}\}\cup \{y_{z}:z\in\{i_1, i_1+1, \ldots, i_k\}\}$ is a subdivision of $F_k$, which contains a vertex-minor isomorphic to $F_k$.
\end{proof}

Now, it is sufficient to find a vertex-minor isomorphic to a graph described in Proposition~\ref{prop:ektofan}.
In Subsection~\ref{sec:patchedpath}, we show how to extract an induced matching between two levels in a leveling where one contains a long induced path.

\subsection{$\ell$-patched paths}\label{sec:patchedpath}

The following proposition will be used to extract an induced matching between two levels in a leveling where one level contains a long induced path.

\begin{proposition}\label{prop:indepandpath}
Let $k\ge 3$ and $\ell\ge 1$ be integers. 
Let $G$ be a graph on the disjoint union of vertex sets $S$ and $T$ 
such that $G[T]$ is an induced path and
each vertex of $T$ has a neighbor in $S$.
If $\abs{T}\ge (k-1)^{(k-1)^{2\ell+1}+1}$, 
then either $S$ has a vertex having at least $k$ neighbors in $T$, or
there exist $S'\subseteq S$, $T'\subseteq T$ with $S'=\{s'_j:1\le j\le \ell\}$, $T'=\{q'_j:1\le j\le \ell\}$ and a graph $G'$ on the vertex set $S'\cup T'$ such that 
\begin{itemize}
\item $G'[S']=G[S']$ and $G'[T']$ is an induced path $q_1'q_2' \cdots q_{\ell}'$,
\item $s'_i$ is adjacent to $q'_j$ in $G'$ if and only if $i=j$, and
\item $G'$ is obtained from $G$ by applying a sequence of local complementations at vertices in $T$ and removing vertices in $V(G)\setminus (S'\cup T')$.
\end{itemize}
\end{proposition}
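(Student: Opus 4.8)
The plan is as follows. We may assume that no vertex of $S$ has at least $k$ neighbors in $T$, so every vertex of $S$ has at most $k-1$ neighbors in $T$; write the induced path $G[T]$ as $t_1t_2\cdots t_n$ with $n=\abs{T}\ge (k-1)^{(k-1)^{2\ell+1}+1}$. First I would extract a large \emph{matching along the path}: scanning $t_1,t_2,\ldots,t_n$ in order and greedily matching each $t_i$ to a hitherto unmatched neighbor in $S$ whenever one exists, one obtains indices $b_1<b_2<\cdots<b_M$ and pairwise distinct $s_1,\ldots,s_M\in S$ with $s_j\in N_G(t_{b_j})$. An unmatched $t_i$ can only have matched $S$-neighbors, and each matched vertex of $S$ has at most $k-1$ neighbors in $T$; counting these shows $M\ge n/k$, which is still enormous. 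The purpose of this step is to replace the hypothesis ``every $t_i$ has a neighbor in $S$'' by ``a long, sparse subsequence of the path comes equipped with distinct private-candidate representatives in $S$.''

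The heart of the argument is to prune $\{b_1,\ldots,b_M\}$ to a subsequence $b_{j_1}<\cdots<b_{j_\ell}$ with the \emph{private-interval property}: for every $m$, the representative $s_{j_m}$ has no neighbor of $T$ in $\{t_i: b_{j_1}\le i\le b_{j_\ell}\}$ other than $t_{b_{j_m}}$ itself. Since each $s_j$ has at most $k-2$ neighbors in $T$ besides $t_{b_j}$, it suffices to control, for each $j$, the position of the nearest neighbor of $T$ strictly to the left of $b_j$ and strictly to the right of $b_j$; the private-interval property for a chosen subsequence amounts to pushing all of these ``stray'' positions out of the interval spanned by the kept vertices. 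I would obtain this by a repeated application of the Erdős--Szekeres theorem (Theorem~\ref{thm:Erdoss1935}), together with pigeonholing on how many stray neighbors lie on each side, choosing each time the monotone direction that forces the stray neighbors of \emph{all} surviving representatives simultaneously out of the common window $[b_{j_1},b_{j_\ell}]$; iterating this over the at most $k-2$ stray neighbors per vertex is exactly what consumes the tower-type lower bound on $\abs{T}$. I expect this to be the main obstacle, precisely because the stray neighbors of different representatives must be expelled from one shared interval rather than from private ones, so a naive single application of Erdős--Szekeres does not suffice.

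Finally, set $S':=\{s_{j_1},\ldots,s_{j_\ell}\}$. I would delete every vertex of $T$ with index $<b_{j_1}$ or $>b_{j_\ell}$ and every vertex of $S\setminus S'$, and then, for each $m<\ell$, contract the subpath of $G[T]$ between $t_{b_{j_m}}$ and $t_{b_{j_{m+1}}}$ down to a single edge by repeatedly applying local complementation at the internal path-vertex adjacent to the current endpoint and deleting that vertex; all these local complementations are at vertices of $T$. By the private-interval property, every vertex $t_x$ at which we locally complement is non-adjacent to every vertex of $S'$ throughout the process, so no edge inside $S'$ is ever toggled, giving $G'[S']=G[S']$, and no edge between $s_{j_m}$ and $t_{b_{j_{m'}}}$ with $m\ne m'$ is ever created; moreover each contraction creates exactly the intended path edge $t_{b_{j_m}}t_{b_{j_{m+1}}}$ and no chord among the kept path-vertices, since $t_{b_{j_m}}$ is adjacent only to internal vertices of its two incident blocks. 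Writing $q'_m:=t_{b_{j_m}}$ and $s'_m:=s_{j_m}$, the resulting graph $G'$ on $S'\cup T'$ has $G'[S']=G[S']$, $G'[T']$ equal to the induced path $q'_1q'_2\cdots q'_\ell$, and $s'_i$ adjacent to $q'_j$ if and only if $i=j$, so all three required conclusions hold. Verifying the invariants through this sequence of local complementations is routine bookkeeping.
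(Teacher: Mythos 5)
There is a genuine gap, and it is exactly at the step you yourself flag as the ``heart of the argument.'' The private-interval property you want --- a choice of $\ell$ representatives $s_{j_1},\dots,s_{j_\ell}$ such that every $s_{j_m}$ has \emph{no} $T$-neighbor in the whole common window $\{t_i: b_{j_1}\le i\le b_{j_\ell}\}$ other than its own matched vertex --- is not just hard to get by Erd\H{o}s--Szekeres and pigeonholing; it is false in general, no matter how large $\abs{T}$ is. Take $k=3$, $T$ the path $t_1t_2\cdots t_n$ with $n$ even, and $S=\{s_1,\dots,s_{n/2}\}$ where $s_j$ is adjacent exactly to $t_{2j-1}$ and $t_{2j}$. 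Every vertex of $T$ has a neighbor in $S$ and no vertex of $S$ has $3$ neighbors in $T$, so the proposition applies; but for any choice of $\ell\ge 3$ representatives and any choice of which of its two neighbors each one is ``matched'' to, every representative other than the leftmost and rightmost has both of its neighbors strictly inside the spanned window, so its stray neighbor cannot be expelled. The obstruction is local (strays sitting right next to the matched vertices), so no monotone-subsequence or pigeonhole selection can remove it, and your final contraction step (which needs every internal window vertex to have degree $2$) cannot even start. Your step 1 (greedy matching, $M\ge n/k$) and your final smoothing/contraction bookkeeping are fine as far as they go, but the middle of the argument cannot be repaired by selection alone.

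The paper's proof differs precisely here: it never asks for a unique neighbor in a common window. It first extracts an ``$\ell$-patched path'' (each $s_j$ has a last neighbor $q_{b_j}$, with $b_1<\cdots<b_\ell$), then a \emph{simple} patched path in which the neighbors of $s_j$ are merely confined to $s_j$'s own block $(b_{j-1},b_j]$ --- each $s_j$ may still have up to $k-1$ neighbors there (Lemmas~\ref{lem:inducedmatching1} and \ref{lem:inducedmatching2}). The excess adjacencies are then destroyed not by further selection but by the graph operations the conclusion permits: in Lemma~\ref{lem:inducedmatching3} one starts from a simple $2\ell$-patched path, keeps every other representative, and uses smoothing, local complementation at a path vertex inside a run of consecutive neighbors, and pivoting a path edge to reduce each kept $s$ to exactly one neighbor before contracting the blocks. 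In the counterexample above this is exactly what saves the day (locally complement at $t_{2j-1}$, say, to kill one of the two edges). So to fix your proof you would need to replace the private-interval pruning by a step that actively edits the stray edges via local complementations at vertices of $T$, which is in substance what the paper does.
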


For $\ell\ge 1$, an \emph{$\ell$-patched path} is a graph $G$ on two disjoint sets $S=\{s_1, s_2, \ldots, s_\ell\}$ and $T=\{q_1, q_2, \ldots, q_n\}$ 
satisfying the following.
\begin{itemize}
\item $G[T]$ is an induced path $q_1q_2 \cdots q_n$, called its \emph{underlying path}.
\item There exists 
a sequence $b_1< \ldots<b_2<\cdots< b_{\ell}\le n$ 
such that for each $j\in \{1,2,\ldots, \ell\}$, 
$s_j$ is adjacent to $q_{b_j}$
and non-adjacent to $q_{m}$ for all $m>b_j$.
\end{itemize}

In particular, if  $s_j$ has no neighbors in $\{q_1,\ldots,q_{b_{j-1}}\}$ for all $j\in\{2,\ldots,k\}$, then we call it a \emph{simple $\ell$-patched path}.

We first find an $\ell$-patched path with sufficiently large $\ell$ from the structure given in Proposition~\ref{prop:indepandpath}.
In the next step, we will find a long simple patched path from a patched path.

\begin{lemma}\label{lem:inducedmatching1}
Let $k\ge 3$ and $\ell\ge 1$ be  integers. Let $G$ be a graph on the disjoint union of vertex sets $S$ and $T$ 
such that $G[T]$ is an induced path and
each vertex of $T$ has a neighbor in $S$.
If $\abs{T}\ge 1+(k-1)+(k-1)^2+\cdots+(k-1)^{\ell}$, then 
either $S$ has a vertex having at least $k$ neighbors in $T$, or 
there exist $S'\subseteq S$ and $T'\subseteq T$ such that 
$G[S'\cup T']$ is an $\ell$-patched path whose underlying path is $G[T']$.
\end{lemma}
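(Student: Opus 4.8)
The plan is to induct on $\ell$. Put $M_\ell:=1+(k-1)+\cdots+(k-1)^\ell$, so that $M_\ell=1+(k-1)M_{\ell-1}$, and write the underlying path as $G[T]=q_1q_2\cdots q_n$ with $n=\abs{T}\ge M_\ell$. The base case $\ell=1$ is immediate: since $T\ne\emptyset$, some $s\in S$ has a neighbor in $T$; letting $b$ be the largest index with $s$ adjacent to $q_b$, the graph $G[\{s\}\cup T]$ is a $1$-patched path with underlying path $G[T]$.

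For the inductive step I would choose a vertex $s^*\in S$ \emph{adjacent to the endpoint $q_1$}. If $s^*$ has at least $k$ neighbors in $T$, we land in the first conclusion and stop; otherwise $N_G(s^*)\cap T=\{q_{p_1},\ldots,q_{p_t}\}$ with $1=p_1<p_2<\cdots<p_t$ and $t\le k-1$. These neighbors cut $T$ into the $t$ intervals $I_i:=\{q_{p_i+1},\ldots,q_{p_{i+1}-1}\}$ for $1\le i\le t-1$ and $I_t:=\{q_{p_t+1},\ldots,q_n\}$, none of which contains a neighbor of $s^*$. Crucially, because $p_1=1$ there is no interval to the left of $q_{p_1}$, so each $I_i$ has the neighbor $q_{p_i}$ of $s^*$ sitting immediately to its left. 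The $t$ intervals contain $n-t$ vertices in total, so some $I:=I_i$ has at least $(n-t)/t\ge n/(k-1)-1>M_{\ell-1}-1$ vertices, hence (being an integer) at least $M_{\ell-1}$.

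Next I would apply the induction hypothesis to $G[(S\setminus\{s^*\})\cup I]$: the path part is $G[I]$, a consecutive subpath of $G[T]$ and so induced; $\abs{I}\ge M_{\ell-1}$; and every vertex of $I$ retains a neighbor in $S\setminus\{s^*\}$ since $I$ avoids $N_G(s^*)$. If some vertex of $S\setminus\{s^*\}$ has at least $k$ neighbors in $I\subseteq T$ we are again done; otherwise we obtain $S''\subseteq S\setminus\{s^*\}$ with $\abs{S''}=\ell-1$ and a consecutive subpath $T''\subseteq I$ such that $G[S''\cup T'']$ is an $(\ell-1)$-patched path with underlying path $G[T'']$. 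I would then let $T'$ be the consecutive subpath of $G[T]$ running from $q_{p_i}$ to the last vertex of $T''$, set $S':=S''\cup\{s^*\}$, and order $S'$ by placing $s^*$ first followed by the order on $S''$. Since $s^*$ meets $T'$ only in its first vertex $q_{p_i}$, it occupies the smallest position; and every vertex of $T'$ lying to the left of $T''$ precedes all positions used by $S''$, so this leftward extension leaves intact the ``no neighbor to the right'' conditions inherited from the $(\ell-1)$-patched path. This yields the desired $\ell$-patched path with underlying path $G[T']$.

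The heart of the argument — and the step I expect to be the real obstacle — is this last extension: one must glue $T''$ to a neighbor of $s^*$ on its \emph{left}, since attaching one on the right would destroy the condition that each vertex of $S''$ has no neighbor among the later vertices of the path, and guaranteeing such a left neighbor always exists is precisely why $s^*$ is taken adjacent to $q_1$ (killing the interval that would otherwise sit left of $s^*$'s first neighbor). A secondary point to be careful with is the pigeonhole estimate, where one uses the strict inequality $(n-t)/t>M_{\ell-1}-1$ together with integrality to conclude $\abs{I}\ge M_{\ell-1}$.
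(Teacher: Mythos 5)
Your argument is sound in substance but organized quite differently from the paper's. The paper proves the lemma in one pass: it sweeps the path greedily from $q_1$, choosing $s_1,s_2,\dots$ and positions $b_1<b_2<\cdots$ inside nested windows of geometrically shrinking length, and then verifies the non-adjacency conditions directly; your proof instead inducts on $\ell$, using the pigeonhole principle to find, among the at most $k-1$ gaps between consecutive neighbours of a vertex $s^*$ chosen adjacent to $q_1$, one gap of size at least $1+(k-1)+\cdots+(k-1)^{\ell-1}$, recursing inside it, and gluing $s^*$ back on via its neighbour on the left of that gap. Your numerics check out, and your observation that $s^*$ must be taken adjacent to $q_1$ (so every gap has an $s^*$-neighbour on its left) is exactly the right point; the recursion is arguably cleaner than the paper's explicit window bookkeeping.

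One repair is needed in the gluing step. ``$(\ell-1)$-patched path whose underlying path is $G[T'']$'' is orientation-agnostic: the inductive hypothesis, invoked as a black box, might return a structure whose labelling of $T''$ runs right-to-left along $T$ (equivalently, only the \emph{first} neighbours, not the last neighbours, of the vertices of $S''$ are distinct), and then the ``no neighbour to the right'' conditions you inherit are with respect to the wrong direction and the leftward extension fails. The fix is trivial: strengthen the statement you prove by induction to require that the ordering of $T'$ is the restriction of the fixed left-to-right ordering of $T$ (i.e., each $s'_j$ has no neighbour of larger index in $T'$). Your base case and your gluing produce exactly this stronger conclusion, so the induction closes with no other change.
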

\begin{proof}
Suppose that every vertex of $S$ has less than $k$ neighbors in $T$.
Let $q_1q_2\ldots q_{\abs{T}}$ be the path induced by $T$.
Assume that $\abs{T}\ge 1+(k-1)+(k-1)^2+\cdots+(k-1)^\ell$.

Let  $s_1\in S$  be a neighbor of $q_1$. Since $s_1$ has at most $k-1$ neighbors on $T$, there exists $ b_1$ such that $q_{b_1}$ is adjacent to $s_1$ and 
$q_{b_1+j}$ is non-adjacent to $s_1$ for all \[1\le j\le 
\left\lceil \frac{1+(k-1)+(k-1)^2+\cdots+(k-1)^\ell}
{(k-1)}-1\right\rceil
= 1+(k-1)+(k-1)^2+\cdots+(k-1)^{\ell-1} \]
and $b_1\le (k-1)^\ell$.

Let $i$ be the maximum $i$ such that there exist distinct vertices $s_1,s_2,\ldots,s_i$ of $S$ 
and a sequence $b_1<b_2<\cdots<b_i$ such that 
\begin{itemize}
\item $b_1\le (k-1)^\ell$,  and $b_{m+1}-b_{m}\le (k-1)^{\ell-m}$  for all $1\le m<i$, 
\item  for all $1\le m\le i$, 
$s_m$ is adjacent to $q_{b_m}$ but non-adjacent to $q_{b_m+j}$ for all $1\le j\le 1+(k-1)+(k-1)^2+\cdots+(k-1)^{\ell-m}$.
\end{itemize}

Such $i$ exists, because $i=1$ satisfies the conditions.

Suppose that $i<\ell$. 
Let $s_{i+1}\in S$ be a neighbor of $q_{b_i+1}$.
For each $m\le i$, since $b_i+1-b_m\le (k-1)^{\ell-m}+(k-1)^{\ell-(m+1)}+\cdots+(k-1)^{\ell-(i-1)}+1
\le 1+(k-1)+(k-1)^2+\cdots+(k-1)^{\ell-m}$,
$s_m$ is non-adjacent to $q_{b_i+1}$ and therefore $s_m\neq s_{i+1}$.

Since $s_{i+1}$ has at most $k-1$ neighbors in $\{q_{b_{i}+j}: 1\le j\le 1+(k-1)+(k-1)^2+\cdots+(k-1)^{\ell-i}\}$, there exists $b_{i+1}$ such that 
$b_i+1\le b_{i+1}\le b_i+(k-1)^{\ell-i}$
and
$s_{i+1}$ is adjacent to $q_{b_{i+1}}$
but non-adjacent to $b_{i+1}+j$ for all \[1\le j\le 
\left\lceil \frac{1+(k-1)+\cdots+(k-1)^{\ell-i}}{k-1}-1\right\rceil =1+(k-1)+\cdots+(k-1)^{\ell-i-1}.\]
This contradicts our assumption that $i$ was maximum. 

Thus $i\ge \ell$. We take $S'=\{s_1,s_2,\ldots,s_\ell\}$
and $T'=\{q_1,q_2,\ldots,q_{b_\ell}\}$.
For all $m<\ell$, since $b_\ell-b_m=(k-1)^{\ell-m}+(k-1)^{\ell-(m+1)}+\cdots+(k-1)^1+1$, $s_m$ is non-adjacent to all $q_i$ with $b_m<i\le b_\ell$.
\end{proof}

\begin{lemma}\label{lem:inducedmatching2}
Let $k\ge 3$ and $\ell\ge 1$ be integers. If $G$ is a graph on the disjoint union of vertex sets $S$ and $T$ such that
$G$ is a $(1+(k-1)+(k-1)^2+\cdots+(k-1)^{\ell-1} )$-patched path whose underlying path is $G[T]$, 
then either $S$ has a vertex having at least $k$ neighbors in $T$, or 
there exist $S'\subseteq S$, $T'\subseteq T$ such that
$G[S'\cup T']$ is a simple $\ell$-patched path whose underlying path is $G[T']$.
\end{lemma}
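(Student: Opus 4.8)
The plan is to reduce the statement to a purely combinatorial claim about the positions of the patches' neighbours on the underlying path, and then prove that claim by induction on $\ell$, peeling off a single patch at each step.

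\emph{Setup and reduction.} Assume no vertex of $S$ has at least $k$ neighbours in $T$, so every vertex of $S$ has at most $k-1$ neighbours in $T$. Write the underlying path $G[T]$ as $q_1q_2\cdots q_n$, let $s_1,\dots,s_m$ (with $m=1+(k-1)+\cdots+(k-1)^{\ell-1}$) be the patches with attachment vertices $q_{b_1},\dots,q_{b_m}$ where $b_1<b_2<\cdots<b_m$, and for each $t$ let $N_t\subseteq\{1,\dots,n\}$ be the set of indices $j$ with $q_j$ adjacent to $s_t$; thus $\abs{N_t}\le k-1$ and $b_t=\max N_t$. I claim it suffices to find indices $i_1<i_2<\cdots<i_\ell$ with
\[ N_{i_j}\cap\{b_{i_1},b_{i_1}+1,\dots,b_{i_{j-1}}\}=\emptyset\qquad\text{for all }2\le j\le\ell. \]
Indeed, take $S'=\{s_{i_1},\dots,s_{i_\ell}\}$ and $T'=\{q_{b_{i_1}},q_{b_{i_1}+1},\dots,q_{b_{i_\ell}}\}$: then $G[T']$ is an induced path, the attachment vertices $q_{b_{i_1}},\dots,q_{b_{i_\ell}}$ occur in this order along it, each $s_{i_j}$ is non-adjacent to every $q_m$ with $m>b_{i_j}$ since $b_{i_j}=\max N_{i_j}$, and the displayed condition says precisely that $s_{i_j}$ has no neighbour among $q_{b_{i_1}},\dots,q_{b_{i_{j-1}}}$. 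Hence $G[S'\cup T']$ is a simple $\ell$-patched path whose underlying path is $G[T']$.

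\emph{The induction.} I would prove, by induction on $\ell$, that any $\bigl(1+(k-1)+\cdots+(k-1)^{\ell-1}\bigr)$-patched path in which every patch has at most $k-1$ neighbours on the underlying path admits a selection $i_1<\cdots<i_\ell$ as above. The case $\ell=1$ is trivial. For $\ell\ge2$, peel off the patch $s_m$ with the largest attachment. Its neighbour set is $N_m=\{g_1<g_2<\cdots<g_{v+1}\}$ with $g_{v+1}=b_m$ and $v+1\le k-1$; setting $g_0=0$, the $v+1$ open integer intervals $I_p=\{m':g_{p-1}<m'<g_p\}$ cover $\{1,\dots,b_m-1\}$ apart from the at most $v\le k-2$ points $g_1,\dots,g_v$. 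Since the $m-1=(k-1)\bigl(1+(k-1)+\cdots+(k-1)^{\ell-2}\bigr)$ values $b_1,\dots,b_{m-1}$ are distinct elements of $\{1,\dots,b_m-1\}$ and at most $k-2$ of them equal some $g_p$, a short computation using $k\ge3$ shows some interval $I_{p^*}$ contains the attachment indices of at least $1+(k-1)+\cdots+(k-1)^{\ell-2}$ of the patches $s_t$ with $t<m$. Apply the induction hypothesis to the patched path induced on these patches together with the vertices $\{q_{m'}:m'\in I_{p^*}\}$ (an induced subpath of $G[T]$ on which each of these patches still has its old attachment $q_{b_t}$); this yields indices $i_1<\cdots<i_{\ell-1}$ among them with $N_{i_j}\cap\{b_{i_1},\dots,b_{i_{j-1}}\}=\emptyset$ computed inside $I_{p^*}$, which is the same condition in $G$ because $b_{i_1},\dots,b_{i_{\ell-1}}$ all lie in $I_{p^*}$ and so the whole interval $\{b_{i_1},b_{i_1}+1,\dots,b_{i_{\ell-1}}\}$ does too. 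Finally put $i_\ell=m$: the conditions for $2\le j\le\ell-1$ are inherited, and for $j=\ell$ we have $N_m\cap\{b_{i_1},\dots,b_{i_{\ell-1}}\}\subseteq N_m\cap I_{p^*}=\emptyset$ since $I_{p^*}$ is a gap between consecutive elements of $N_m$. This finishes the induction and hence the lemma.

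\emph{Main obstacle.} The delicate point is choosing the right inductive statement. A naive induction that just restricts the underlying path to a shorter interval fails: deleting vertices of $G$ does not delete the edges from the patches to vertices outside that interval, so a patch that appears ``simple'' within a subpath may have stray neighbours that wreck simplicity once the path is extended again. The fix is (i) to formulate the conclusion as the exact combinatorial condition above, with the output's underlying path equal to the segment $\{q_{b_{i_1}},\dots,q_{b_{i_\ell}}\}$ from the first to the last chosen attachment, and (ii) to peel from the right, so that the recursion always lives strictly to the right of the gap boundary $g_{p^*-1}$ while re-attaching $s_m$ only enlarges the path to the right; this guarantees no new edge between a kept patch and the path is ever created, so the simplicity conditions of the recursive solution carry over unchanged.
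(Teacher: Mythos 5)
Your proof is correct and takes essentially the same route as the paper: peel off the patch with the largest attachment, pigeonhole its at most $k-1$ neighbours on the path to find a gap (an interval between consecutive neighbours) containing at least $1+(k-1)+\cdots+(k-1)^{\ell-2}$ attachments, recurse inside that gap, and reattach the peeled patch. Your reformulation via the index-selection condition, with $T'$ taken to be the interval from the first to the last chosen attachment, plays exactly the role of the paper's strengthened inductive claim that $T'$ can be chosen as a suffix of the underlying path.
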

\begin{proof}
  Suppose that every vertex of $S$ has at most $k-1$ neighbors in $T$.
  Suppose that $S=\{s_1, s_2, \ldots, s_{\abs{S}}\}$ and $G[T]$
  is an underlying induced path $q_1q_2\cdots q_m$. Furthermore let us assume that there exists a sequence $b_1<b_2<\cdots<b_{(k-1)^\ell}\le m$ such that 
  for all $i$, $s_i$ is adjacent to $q_{b_i}$ but non-adjacent to $q_j$ for all $j>b_i$.

  We prove a stronger claim that $T'$ can be chosen so that $T'=\{q_{i},q_{i+1},q_{i+2},\ldots,q_m\}$ for some $i$.
  We proceed by induction on $\ell$.
  The statement is trivial if $\ell=1$ and so we may assume $\ell>1$.

  We say that a vertex $q_j$ of $T$ is \emph{paired} with $s_i$ if $b_i=j$.
  There are $\abs{S}=1+(k-1)+(k-1)^2+\cdots+(k-1)^{\ell-1}$ paired vertices in $T$.
  We say that a paired vertex $q_j$ is an \emph{$s$-friend} of $q_t$ for $s\in S$
  if $j<t$ and $q_j,q_{j+1},\ldots,q_{t-1}$ are non-neighbors of $s$ and $q_t$ is a neighbor of $s$.

  Let $s'=s_{\abs{S}}$. Since $s'$ has at most $k-1$ neighbors in $T$, there exists $b'$ such that $s'$ is adjacent to $q_{b'}$ and 
  the number of $s'$-friends of $q_{b'}$ is at least 
  \[
  \left\lceil \frac{(1+(k-1)+(k-1)^2+\cdots+(k-1)^{\ell-1})-(k-1)}{k-1}\right\rceil
  = 1+(k-1)+\cdots+(k-1)^{\ell-2}.
  \]
  Let $S_1$ be a set of all $s_i\in S$ such that $q_{b_i}$ is an $s'$-friend of $q_{b'}$
  and $\abs{S_1}=1+(k-1)+\cdots+(k-1)^{\ell-2}$.
  Let $i$ be the minimum such that $q_i$ is paired with some $s\in S_1$.
  Let $T_1=\{q_i,q_{i+1},\ldots,q_{b'-1}\}$. 
  Then $G[S_1\cup T_1]$ is a $(1+(k-1)+\cdots+(k-1)^{\ell-2})$-patched path and therefore by the induction hypothesis, there exist $S'_1\subseteq S_1$, $T'_1\subseteq T_1$ such that $G[S'_1\cup T'_1]$  is a simple $(\ell-1)$-patched path whose underlying path is $G[T'_1]$
and furthermore $T'_1=\{q_p,q_{p+1},\ldots,q_{b'-1}\}$ for some $p$.

By the definition of an $s'$-friend, no vertex in $T_1$ is adjacent to $s'$.
Let $S'=S'_1\cup \{s'\}$ and $T'=T_1\cup \{q_{b'},q_{b'+1},\ldots,q_m\}$. Then $G[S'\cup T']$ is a simple $\ell$-patched path whose underlying path is $G[T']$.
\end{proof}

\begin{lemma}\label{lem:inducedmatching3}
Let $\ell$ be a positive integer. If $G$ is a graph on the disjoint union of vertex sets $S$ and $T$ such that
$G$ is a simple $2\ell$-patched path whose underlying path is $G[T]$,
then 
there exist $S'\subseteq S$, $T'\subseteq T$ with $S'=\{s'_j:1\le j\le \ell\}$, $T'=\{q'_j:1\le j\le \ell\}$ and a graph $G'$ on the vertex set $S'\cup T'$ such that 
\begin{itemize}
\item $G'[S']=G[S']$ and $G'[T']$ is an induced path $q_1'q_2' \cdots q_{\ell}'$,
\item $s'_i$ is adjacent to $q'_j$ in $G'$ if and only if $i=j$, and 
\item $G'$ is obtained from $G$ by applying a sequence of local complementations at vertices in $T$ and removing vertices in $V(G)\setminus (S'\cup T')$.
\end{itemize}
\end{lemma}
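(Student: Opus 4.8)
The plan is to start with a simple $2\ell$-patched path $G$ on $S=\{s_1,\dots,s_{2\ell}\}$ and $T=\{q_1,\dots,q_n\}$, with witnesses $b_1<b_2<\cdots<b_{2\ell}$ so that $s_j$ is adjacent to $q_{b_j}$, non-adjacent to $q_m$ for $m>b_j$, and (by simplicity) non-adjacent to $q_m$ for $m<b_{j-1}$. The only ``excess'' adjacency we must kill is that $s_j$ may be adjacent to some $q_m$ with $b_{j-1}\le m<b_j$, i.e. to vertices in the short stretch of the path between its predecessor's attachment point and its own. The idea is to process the $s_j$'s one at a time, and for each pair $(s_{2i-1},s_{2i})$ use one of the two indices to ``clean up'' the adjacencies of the other, then discard the helper vertex; this is why we need $2\ell$ patches to extract $\ell$ of them. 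I would keep only $s'_i$ for $i=1,\dots,\ell$ (say $s'_i=s_{2i}$) and a carefully chosen set of $\ell$ path vertices $q'_i$, after performing local complementations at path vertices.

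Concretely, here is the intended mechanism. Fix $i$ and consider the block of path vertices $B_i=\{q_{b_{2i-1}},q_{b_{2i-1}+1},\dots,q_{b_{2i}}\}$; note these blocks are pairwise disjoint and their union is a sub-path of $G[T]$. Inside $B_i$, the vertex $s_{2i}$ is attached at the right end $q_{b_{2i}}$ and possibly at some interior/left vertices, while $s_{2i-1}$ is attached at the left end $q_{b_{2i-1}}$ and, by simplicity, at no vertex to the left of $B_i$. I would first remove, from $T$, all path vertices strictly between consecutive blocks, smoothing/deleting as needed so that the blocks become consecutive along the remaining path — more precisely, since $G[T]$ is induced, deleting vertices just breaks the path, so instead I will only delete the path vertices I do not need and keep the block endpoints $q_{b_{2i}}$ as the future $q'_i$. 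The key local move: using a pivot or local complementation along a short segment of $B_i$ (exactly in the style of the pivoting computation in the proof of Lemma~\ref{lem:laddertofan}), I can arrange that, after the operation, $s_{2i}$ is adjacent to $q_{b_{2i}}$ and to no other surviving path vertex, and that $s_{2i-1}$ (or whatever auxiliary vertices were used) can be deleted without disturbing the other blocks. Since adjacencies of $s_{2i}$ to vertices outside $B_i$ are already zero on the right (by the defining property $m>b_{2i}$) and can be made zero on the left after we delete everything to the left of the cleaned segment, the only interactions to track are within $B_i$, and these do not touch $s_{2j}$ or its block for $j\ne i$ because local complementations at a vertex of $B_i$ only affect vertices adjacent to it, all of which lie in $B_i\cup\{s_{2i-1},s_{2i}\}$ — here I use the induced-path structure and the patched-path adjacency pattern to confine the damage. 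After doing this for every $i$, I set $G'[T']$ to be the path on $q'_1,\dots,q'_\ell$ (obtained by contracting/smoothing the cleaned blocks down to single vertices, which is legitimate since smoothing is a vertex-minor operation), $S'=\{s'_1,\dots,s'_\ell\}$, and $s'_i$ adjacent to $q'_i$ only; and $G'[S']=G[S']$ because no operation was ever performed at a vertex of $S$.

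The main obstacle I anticipate is bookkeeping the interference between blocks during the smoothing/deletion of the ``connector'' path vertices between consecutive $b_{2i}$ and $b_{2i+1}$: I need the cleaned block for index $i$ to end up adjacent along the underlying path to the cleaned block for index $i+1$, which forces me to be careful that the local complementations used inside $B_i$ do not create a chord from $B_i$ to $B_{i+1}$ once the connector vertices are smoothed. I expect to resolve this by always performing the block-cleaning operations at vertices strictly interior to each block, so that the block endpoints (which are what get joined when connectors are smoothed) retain their original adjacencies to everything outside the block; then smoothing the connectors simply splices the blocks together into one induced path, and a final round of smoothing collapses each block to its representative $q'_i$. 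A secondary subtlety is ensuring $s'_i$ remains adjacent to \emph{exactly} $q'_i$ after the block is collapsed — this follows because we arrange $s'_i$'s unique surviving neighbor in the block to be the representative vertex before collapsing, and simplicity guarantees it had no neighbors further left to begin with.
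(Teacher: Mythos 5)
Your high-level plan is the same as the paper's: keep $s_{2i}$, discard $s_{2i-1}$ so that its stretch of the path acts as a buffer, clean each block using local complementations and pivots at path vertices only, and smooth everything else down to one representative per block. However, the step that carries the whole lemma --- making $s_{2i}$ adjacent to exactly one surviving path vertex --- is only asserted (``using a pivot or local complementation along a short segment of $B_i$ \dots\ I can arrange that''), and as described it does not go through. In a general simple $2\ell$-patched path, $s_{2i}$ may have arbitrarily many neighbours in its interval $(b_{2i-1},b_{2i}]$, and they need not be consecutive; no single pivot or local complementation reduces such a neighbourhood to a single vertex. The paper handles this with an induction on $\abs{T}$ driven by two reduction rules (smooth any path vertex with no neighbour in $S$; if some $s_i$ has four consecutive path neighbours $q_{x+1},\dots,q_{x+4}$, locally complement at $q_{x+2}$ and delete it), which forces every $s_i$ to have at most three, consecutive, path neighbours, and only then a short three-case analysis finishes. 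Your proposal contains no analogue of this reduction, and your pointer to the pivoting computation of Lemma~\ref{lem:laddertofan} is not a substitute: that computation accumulates neighbours along a path, it does not delete them.

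Moreover, your anticipated resolution --- perform the cleaning only at vertices strictly interior to $B_i$ so that the endpoints keep their outside adjacencies, while keeping $q_{b_{2i}}$ as both the unique neighbour of $s_{2i}$ and the representative $q_i'$ --- is internally inconsistent. Any local complementation at an interior path neighbour of $s_{2i}$ toggles $s_{2i}$'s adjacency to the adjacent path vertices: in the two-neighbour case ($s_{2i}$ adjacent to $q_{b_{2i}-1}$ and $q_{b_{2i}}$) the only interior move that kills the extra edge also kills $s_{2i}q_{b_{2i}}$ and relocates the unique attachment to $q_{b_{2i}-2}$, whereas trying to keep $q_{b_{2i}}$ by locally complementing at $q_{b_{2i}}$ itself creates the edge $s_{2i}q_{b_{2i}+1}$ into the next block. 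The paper's resolution is precisely to let the attachment drift leftward (to $q_{b_{2i}-2}$ or $q_{b_{2i}-3}$), which is safe because the discarded sub-block of $s_{2i-1}$ supplies the needed room --- this, rather than $s_{2i-1}$ actively ``cleaning up'' $s_{2i}$, is the role of pairing. To repair your argument you need to state the reduction rules and the resulting bounded case analysis (or an equivalent induction); with that in place, the surrounding bookkeeping about confining operations to blocks and smoothing connectors is fine.
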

\begin{proof}
 Suppose that $S=\{s_1, s_2, \ldots, s_{2\ell}\}$ and $G[T]$
  is an underlying induced path $q_1q_2\cdots q_m$. Furthermore let us assume that there exists a sequence $0=b_0<b_1<b_2<\cdots<b_{2\ell}\le m$ such that 
  for all $i$, $s_i$ is adjacent to $q_{b_i}$ but non-adjacent to $q_j$ for all $j>b_i$ and all $j\le b_{i-1}$.
We proceed by induction on $\abs{V(T)}$.
  The statement is trivial if $\abs{V(T)}=2\ell$.  We assume that $\abs{V(T)}>2\ell$.

If $T$ contains a vertex of degree $2$ in $G$, then we smooth it. 
Since the resulting graph is still a simple  $2\ell$-patched path, we are done by induction hypothesis.

If $s_{i}$ is adjacent to $4$ consecutive neighbors $q_{x+1}, q_{x+2}, q_{x+3}, q_{x+4}$, 
then we apply local complementation at $ q_{x+2}$ and remove it.
This operation removes the edges $s_{i}q_{x+1}$ and $s_{i}q_{x+3}$.
Since $s_{i}$ has at least one neighbor $q_{x+4}$, 
the resulting graph is a simple $2\ell$-patched path, and  it contains the required structure by induction hypothesis.

By these two reductions, we may assume that 
each vertex in $T$ has a neighbor in $S$, and each vertex in $S$ has at most $3$ neighbors in $T$.

Now, we take a subset $S'=\{s_2, s_4, \ldots, s_{2\ell}\}$ of $S$, and let $G':=G[T\cup S']$.
For each $1\le i\le \ell$, 
we shrink the path $q_{b_{2(i-1)}+1}q_{b_{2(i-1)}+2} \cdots q_{b_{2i}}$ into some vertex $q'_i$
such that $q'_i$ is adjacent to $s_{2i}$.

If $\abs{N_G(s_{2i})\cap T}=1$, then let $q'_i:=q_{b_{2i}}$.
If $\abs{N_G(s_{2i})\cap T}=2$, then we apply local complementation at $q_{b_{2i}-1}$ and remove it. 
Then $s_{2i}q_{b_{2i}}$ is removed and $s_{2i}q_{b_{2i}-2}$ is added.
We assign $q'_i:=q_{b_{2i}-2}$.
In case when $\abs{N_G(s_{2i})\cap T}=3$, we pivot $q_{b_{2i}-2}q_{b_{2i}-1}$ and remove both end vertices. 
Then $s_{2i}q_{b_{2i}}$ is removed and $s_{2i}q_{b_{2i}-3}$ is added.
We assign $q'_i:=q_{b_{2i}-3}$.
We can observe that in each case, $s_{2i}$ has exactly one neighbor on the remaining path from $q_{b_{2(i-1)}+1}$ to $q_{b_{2i}}$.
Finally, we smooth all vertices of $q_{b_{2(i-1)}+1}, \ldots, q_{b_{2i}}$ except $q'_i$ in the remaining path.
Then we obtain an induced path $q'_1q'_2 \cdots q'_{\ell}$ such that
$s_{2i}$ is adjacent to $q'_j$ if and only if $i=j$.
\end{proof}

\begin{proof}[Proof of Proposition~\ref{prop:indepandpath}]
Suppose that every vertex of $S$ has at most $k-1$ neighbors in $T$.
Since $\abs{T}\ge (k-1)^{(k-1)^{2\ell+1}+1}$ and $k\ge 3$, by Lemma~\ref{lem:inducedmatching1}, 
there exist $S_1\subseteq S$ and $T_1\subseteq T$ such that 
$G[S_1\cup T_1]$ is an $(k-1)^{2\ell+1}$-patched path whose underlying path is $G[T_1]$.
Then, 
by Lemma~\ref{lem:inducedmatching2}, 
there exist $S_2\subseteq S_1$, $T_2\subseteq T_1$ such that
$G[S_2\cup T_2]$ is a simple $2\ell$-patched path whose underlying path is $G[T_2]$.
Lastly, by Lemma~\ref{lem:inducedmatching3}, 
there exist $S_3\subseteq S_2$, $T_3\subseteq T_2$ with $S_3=\{s'_j:1\le j\le \ell\}$, $T_3=\{q'_j:1\le j\le \ell\}$ and a graph $G'$ on the vertex set $S_3\cup T_3$ such that 
\begin{itemize}
\item $G'[S_3]=G[S_3]$ and $G'[T_3]$ is an induced path $q_1'q_2' \cdots q_{\ell}'$,
\item $s'_i$ is adjacent to $q'_j$ in $G'$ if and only if $i=j$, and 
\item $G'$ is obtained from $G$ by applying a sequence of local complementations at vertices in $T_2\subseteq T$ and removing vertices in $V(G)\setminus (S_3\cup T_3)$.\qedhere
\end{itemize}
\end{proof}

\subsection{Proof of Theorem~\ref{thm:mainfanvertexminor}}\label{sec:fanfreegraphs}

\begin{proof}[Proof of Theorem~\ref{thm:mainfanvertexminor}]

Let $q$ and $k$ be positive integers. 
If $k=1$, then it is trivial. Since $F_2$ is isomorphic to $C_3$, graphs having no vertex-minor isomorphic to $F_2$ are exactly forests, and we can color such graphs with $2$ colors.
Therefore, we may assume that $k\ge 3$.
Let $\ell:=R(k,k)^{2(k-1)^2-1}+1$ and $m:=(k-1)^{(k-1)^{2R(k+1, k\ell)+1}+1}$.
Let $G$ be a graph with maximum clique size  $q$ such that it has no vertex-minor isomorphic to $F_k$.
We claim that $G$ can be colored with $2(m-1)^{q-1}$ colors.

We may assume that $G$ is connected as we can color each connected component separately.
Let $v$ be a vertex of $G$ and for $i\ge 0$, let $L_i$ be the set of all vertices of $G$ whose distance to $v$ is $i$ in $G$. 
If each $L_j$ is $(m-1)^{q-1}$-colorable, then $G$ is $2(m-1)^{q-1}$-colorable. 
By Theorem~\ref{thm:gyarfas}, we may assume that there exists a level $L_n$ containing an induced path $P$ on $m$ vertices.

By Proposition~\ref{prop:ektofan}, it is sufficient to find a vertex-minor that is isomorphic to a graph obtained from the disjoint union of $E_{\ell}$ with the leaves $w_1, \ldots, w_{\ell}$ and a connected graph $H$ on at least $\ell$ vertices with pairwise distinct vertices $v_1, \ldots, v_{\ell}$, by identifying $v_i$ and $w_i$ for all $1\le i\le \ell$. 
We construct this graph based on the path $P$ and the leveling $L_0, \ldots, L_n$.

Since $L_0, \ldots, L_n$ is a leveling, each vertex in $P$ has a neighbor in $L_{n-1}$. If $n=1$, then we directly obtain a vertex-minor isomorphic to $F_k$. We may assume that $n\ge 2$.
Since $m=(k-1)^{(k-1)^{2R(k+1, k\ell)+1}+1}$, by Proposition~\ref{prop:indepandpath},
there exist $S=\{s_j:1\le j\le R(k+1, k\ell)\}\subseteq L_{n-1}$, $T=\{q_j:1\le j\le R(k+1, k\ell)\}\subseteq V(P)$, and a graph $G'$ on the vertex set $L_0\cup \cdots \cup L_{n-2}\cup S\cup T$ such that
\begin{itemize}
\item $G$ and $G'$ are identical on the vertex set $L_0\cup \cdots \cup L_{n-2}\cup S$, 
\item $G'[T]$ is an induced path $q_1q_2 \cdots q_{R(k+1, k\ell)}$, 
\item $s_i$ is adjacent to $q_j$ in $G'$ if and only if $i=j$, and 
\item $G'$ is obtained from $G$ by applying a sequence of local complementations at vertices in $P$ and removing vertices in $V(G)\setminus V(G')$.
\end{itemize}
Since $\abs{S}=R\left(k+1, k\ell \right)$, by Ramsey's Theorem,
$G'[S]$ contains a clique of size $k+1$ or an independent set of size $k\ell$.
If $G'[S]$ has a clique $C$ of size $k+1$, 
then for a vertex $s_i\in C$ with minimum $i$, 
$G'*v$ contains an induced subgraph isomorphic to a subdivision of $F_k$ and so $G$ has a vertex-minor isomorphic to $F_k$.
Thus we may assume that 
$G'[S]$ contains an independent set $S'$ of size $k\ell$.

Now, if there is a vertex in $L_{n-2}$ that has $k$ neighbors on $S'$ in $G'$, 
then $G'$ contains an induced subgraph isomorphic to a subdivision of $F_k$.
Thus, we may assume that each vertex in $L_{n-2}$ has at most $k-1$ neighbors on $S'$ in $G'$. It implies that $n\ge 3$.
Since each vertex of $S'$ has a neighbor in $L_{n-2}$ and $k\ell\ge (k-1)\ell+1$, 
there exist $\{w_1, \ldots, w_{\ell}\}\subseteq L_{n-2}$ and $\{x_1, \ldots, x_{\ell}\}\subseteq S'$ where
$w_i$ is adjacent to $x_j$ in $G'$ if and only if $i=j$.
For each $1\le i\le \ell$, let $y_i$ be the neighbor of $x_i$ contained in $T$.

Let $G''$ be the graph obtained from  
\[G'[L_0\cup \cdots \cup L_{n-3}\cup \{w_{z}, x_z: z\in\{1, \ldots, \ell\}\}\cup T]\]
by repeatedly removing degree-$1$ vertices and smoothing degree-$2$ vertices in $T$ other than $y_1, \ldots, y_{\ell}$.
In the resulting graph, the vertices $y_1, \ldots, y_{\ell}$ remain among vertices of $T$.
Note that $G'[L_0\cup \cdots \cup L_{n-3}\cup \{w_{z}: z\in\{1, \ldots, \ell\}\}]$
is connected because there is a path from each vertex to the vertex in $L_0$.
Also, the graph obtained from $G''[\{w_{z}, x_z, y_z: z\in\{1, \ldots, \ell\}\}]$ 
by removing edges in $G''[\{w_{z}: z\in\{1, \ldots, \ell\}]$ is isomorphic to $E_{\ell}$.
Therefore, by Proposition~\ref{prop:ektofan}, it contains a vertex-minor isomorphic to $F_k$.
\end{proof}

\section{Coloring graphs without $C_k$ pivot-minors}
\label{sec:4}
In this section, we prove the second main result.

\begin{theorem}\label{thm:mainpivotminor}
For each integer $k\ge 3$, the class of graphs having no pivot-minor isomorphic to a cycle
of length $k$ is $\chi$-bounded.
\end{theorem}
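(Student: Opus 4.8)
The plan is to mirror the strategy of the proof of Theorem~\ref{thm:mainfanvertexminor}, replacing the ``fan vertex-minor'' endpoint by a ``cycle pivot-minor'' endpoint. Fix $k\ge 3$ and $q$, and let $G$ be a graph with $\omega(G)\le q$ having no pivot-minor isomorphic to $C_k$. As before, build the distance leveling $L_0,L_1,\ldots$ from a fixed vertex in a connected component; it suffices to $N$-color each level for a bounded $N$, which by Theorem~\ref{thm:gyarfas} reduces to the case where some level $L_n$ contains a very long induced path $P$. Every vertex of $P$ has a neighbor in $L_{n-1}$, so Proposition~\ref{prop:indepandpath} (applied with a suitably large bound on $\abs{T}=\abs{V(P)}$) yields, after local complementations on vertices of $P$, a long induced matching between a subset $S'\subseteq L_{n-1}$ and an induced subpath, i.e.\ a ladder-like structure; using Ramsey's theorem on $G'[S']$ we may assume $S'$ is independent, and then combine it with the leveling below to produce a long induced structure consisting of an induced path whose endpoints are joined through short paths going up the leveling to a common ancestor (or directly, a long induced cycle with many chords coming off one vertex).

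The genuinely new ingredient, flagged in the introduction, is the parity issue: a subdivided fan can be bipartite, so we cannot always extract $C_k$ from ``a long induced path plus a vertex with many neighbors on it''. So the core lemma I would prove first is the one promised in Subsection~\ref{sec:findingfan}: for each fixed $k\ge 3$ there is $\ell$ with $\ell\equiv k\pmod 2$ such that every graph consisting of an induced path $P$ of length $\ell$ together with a vertex $v\notin V(P)$ adjacent to both endpoints of $P$ (and possibly to other vertices of $P$) has a pivot-minor isomorphic to $C_k$. The proof of this lemma should go by pivoting/smoothing along $P$: if $v$ has an internal neighbor, one can pivot an edge of $P$ incident to a neighbor of $v$ to ``shortcut'' $P$, reducing the number of neighbors of $v$ on the path while controlling the parity of the length of the resulting induced cycle through $v$; since each such pivot changes the cycle length by a fixed amount, a counting/parity argument (Erd\H{o}s--Szekeres-type or just a pigeonhole on residues mod something) guarantees that after a bounded number of reductions one reaches an induced cycle through $v$ of length exactly $k$, which is then a pivot-minor. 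One has to be careful that pivoting an edge $q_iq_{i+1}$ of $P$ keeps the rest of $P$ an induced path and only toggles adjacencies among $S_1,S_2,S_3$ as in Figure~\ref{fig:pivoting}; this is exactly the kind of local bookkeeping done in Lemma~\ref{lem:laddertofan} and Lemma~\ref{lem:inducedmatching3}.

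With that lemma in hand, the rest of Section~\ref{sec:4} follows the template of Section~\ref{sec:3}: from the long induced path in level $L_n$ and the matching into $L_{n-1}$, I would use the ancestor structure of the leveling (two vertices of $L_{n-1}$ have a common ancestor reachable by internally-disjoint induced paths of equal length, one vertex in each of $L_0,\ldots,L_{n-1}$) to build, for arbitrarily large $\ell$, an induced subgraph $H_\ell$ that is a subdivision of a graph consisting of a path plus a vertex joined to its ends — then first pivot and smooth degree-$2$ vertices to turn $H_\ell$ into exactly the ``induced path of length $\ell'$ plus an apex adjacent to both ends'' configuration for some $\ell'\equiv \ell\pmod 2$ as large as we like, and finally choose $\ell$ so that the core lemma delivers $C_k$. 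The analogue of Proposition~\ref{prop:ektofan} here is this ``path-with-apex $\Rightarrow$ $C_k$-pivot-minor'' statement, and the analogues of Proposition~\ref{prop:indepandpath} and Lemmas~\ref{lem:inducedmatching1}--\ref{lem:inducedmatching3} can be reused almost verbatim since they only manipulate a path and vertices attached to it via local complementations inside the path.

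I expect the main obstacle to be the core parity lemma: proving that the pivot/smoothing reductions along $P$ can be orchestrated to land on an induced cycle of length \emph{exactly} $k$ rather than merely some length congruent to $k$ and bounded. The difficulty is that each reduction decreases the cycle length by an amount depending on the local structure (how many consecutive non-neighbors of $v$ lie between two neighbors), so one needs either enough ``slack'' in the starting length $\ell$ together with the ability to also lengthen via $1$-subdivision-type moves, or a careful choice of which neighbor of $v$ to pivot at each step; getting the parity to work out while keeping the cycle induced at every intermediate stage is where the real care is needed, and I would set $\ell$ to be a large function of $k$ (not just $\ell\equiv k\pmod 2$) precisely to have room for this.
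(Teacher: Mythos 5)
The decisive gap is the core lemma. You correctly isolate it (it is the paper's Proposition~\ref{prop:pathtocycle}), but your proposal does not prove it, and your sketch --- ``each pivot changes the cycle length by a fixed amount, then pigeonhole on residues'' --- does not engage with the actual obstruction, which you yourself flag as the hard part. In the paper this lemma is where essentially all of the work of Section~\ref{sec:4} lives: one classifies the gaps $i_{j+1}-i_j$ between consecutive neighbors of the apex on the path and argues by cases. A long gap of the same parity as $k$ lets one shorten an induced cycle two at a time (Lemma~\ref{lem:cycletocycle}); a long gap of the wrong parity forces one to locate an odd gap elsewhere (it exists because the total length has the parity of $k$) and propagate it by pivots (Lemma~\ref{lem:evenandodd}(1)); if all gaps are short, then either there are many odd gaps, which yields a large fan as a pivot-minor (Lemma~\ref{lem:evenandodd2}) and then $C_k$ or $C_{k+2}$ via Lemma~\ref{lem:evenandodd}(2)--(3), or there is a long run of even gaps, handled by a separate pivoting pattern. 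A pivot on an edge of an induced cycle changes its length by exactly $2$, so within a fixed parity class no pigeonhole on residues can by itself land you on length exactly $k$; the bipartite subdivided fan shows that without this structural case analysis the reduction can simply get stuck. Acknowledging this difficulty is not the same as resolving it, so the proposal does not yet contain a proof.

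There is also a flaw in your outer reduction that is specific to pivot-minors: Proposition~\ref{prop:indepandpath}, Lemmas~\ref{lem:inducedmatching1}--\ref{lem:inducedmatching3}, and single-vertex smoothing all use local complementations, which are vertex-minor operations but not pivot-minor operations. Running that machinery certifies only a vertex-minor isomorphic to $C_k$ (recall $C_n$ has $C_{n-1}$ as a vertex-minor but not as a pivot-minor), so Theorem~\ref{thm:mainpivotminor} would not follow. The paper's proof avoids this entirely and is much leaner than your plan: it takes one parent $x$ of $p_0$, one parent $y$ of $p_t$, their first common ancestor $z$ in the leveling, and shortens the connecting path two vertices at a time by pivoting an edge and deleting both of its ends --- an operation that stays within pivot-minors and preserves parity --- until a single vertex is adjacent to both ends of the untouched induced path of length $\equiv k\pmod 2$, at which point Proposition~\ref{prop:pathtocycle} finishes; no Ramsey step and no induced matching are needed. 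If you keep your route, every reduction must be re-expressed as pivots plus deletions (pairs of smoothings via pivoting the edge between them, never a single smoothing), and the matching/Ramsey stage should be dropped as unnecessary.
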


\subsection{Obtaining $C_k$ pivot-minor from a large incomplete fan}\label{sec:findingfan}

We show that for every fixed $k$, there exists $\ell$ with the same parity as $k$ such that every graph consisting of an induced path $P$ of length $\ell$ and a vertex $v$ not on $P$ where  $v$ is adjacent to the end vertices of $P$ contains a pivot-minor isomorphic to $C_k$.
This will support Theorem~\ref{thm:mainpivotminor}. 

\begin{proposition}\label{prop:pathtocycle}
Let $k\ge 3$ be an integer and  $n\geq 6 k^3-26 k^2+25 k-2$ such that $k\equiv n\pmod 2$.
If $G$ is a graph with a vertex $v$ such that $G\setminus v$ is an induced path $P$ of length $n$ and $v$ is adjacent to the end vertices of $P$, then 
$G$ contains a pivot-minor isomorphic to $C_k$.
\end{proposition}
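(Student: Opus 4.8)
Let $P = p_0 p_1 \cdots p_n$ be the induced path, and let $v$ be adjacent to $p_0$ and $p_n$ (and possibly to many other $p_i$'s). So $G$ is a ``fan-like'' graph where $v$ may have arbitrarily many extra neighbors on $P$; the whole graph $G + vp_0 + vp_n$ together with $P$ contains the cycle $C := v p_0 p_1 \cdots p_n v$ of length $n+2$, but we cannot simply delete $v$'s extra chords — deletion of vertices and pivots are our only moves, and as the introduction warns (the subdivided fan example), parity obstructions mean we cannot always descend to a shorter cycle of the \emph{wrong} parity. The plan is to use pivots along $P$ to ``absorb'' or ``jump over'' the unwanted chords $v p_i$ while controlling the parity of the resulting induced cycle through $v$, and then cut the cycle down to length exactly $k$.

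\textbf{Key steps.} First I would record the basic effect of pivoting an edge $p_i p_{i+1}$ on the induced path $P$ when $v$ is adjacent to exactly one of $p_i, p_{i+1}$: by the $S_1/S_2/S_3$ description of pivoting (Figure~\ref{fig:pivoting}), such a pivot swaps the labels of $p_i$ and $p_{i+1}$, keeps $P$ an induced path (of the same length), and has the net effect of \emph{moving} a single chord $vp_i$ one step along the path while toggling adjacency of $v$ to the neighbors involved — crucially it changes $v$'s neighborhood on $P$ in a controlled, local way. Iterating this lets me slide chords together. When two chords $vp_i$ and $vp_{i+1}$ become consecutive, a pivot at $p_i p_{i+1}$ (now $S_3 \ne \emptyset$) deletes the edge between the two former neighbors on one side and, after removing one vertex by smoothing (Section~\ref{sec:prelim}), \emph{reduces $v$'s degree on $P$ by two while shortening the path by a controlled amount}. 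The parity of $n$ (hence of the path length) is preserved modulo $2$ by each such ``degree-reducing'' move precisely because each move removes an even number of vertices, or — if it removes an odd number — it does so in a way I will pair up so that two consecutive reductions together preserve parity. Repeating until $v$ has exactly two neighbors left on $P$ (its endpoints), I arrive at an induced cycle $C'$ through $v$ whose length has the same parity as $n$, hence the same parity as $k$, and whose length is at least $k$ provided $n$ started large enough. Finally, an induced cycle of length $\ell$ with $\ell \equiv k \pmod 2$ and $\ell \ge k$ has a pivot-minor isomorphic to $C_k$: repeatedly pivoting an edge of the cycle and deleting the two endpoints (or smoothing) shortens the cycle by $2$ while keeping it induced, as noted in the footnote about $C_k$ and $C_{k-2}$ in the introduction; so $C_\ell \ge C_k$ in the pivot-minor order when the parities agree.

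\textbf{The bookkeeping / main obstacle.} The genuinely delicate part is the quantitative bound: I need to show that if $v$ has $d$ neighbors on $P$ and $P$ has length $n$, then clearing all but two of the chords costs at most some explicit number of vertices of the path, and that $n \ge 6k^3 - 26k^2 + 25k - 2$ leaves a cycle of length $\ge k$ at the end. Each chord-sliding step is free (length-preserving), but the ``merge two consecutive chords'' step can shrink the path; I must bound how many such merges happen ($\le d/2$, but $d$ could be as large as $n$) and how much each one costs. The resolution is that I should not slide chords arbitrarily far: instead, partition $P$ into consecutive blocks and argue that within any block either $v$ has few neighbors (so few merges, cheap) or $v$ has many neighbors clustered together (so merges are cheap per-chord because the chords are already close). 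A clean way to organize this is to first reduce to the case where no two chords of $v$ are ``too far apart relative to the number of chords'' by a pigeonhole/greedy selection of a sub-path, analogous to the patched-path extraction of Section~\ref{sec:patchedpath}. The polynomial $6k^3 - 26k^2 + 25k - 2$ suggests the accounting is: a worst-case of roughly $k$ merges, each costing $O(k^2)$ path-vertices, plus an additive $O(k)$ for the final trimming to length exactly $k$ — so the main work is verifying these per-step costs and that parities line up at every stage, which is routine but must be done carefully, keeping the displayed inequalities on single lines to respect the path-length budget.
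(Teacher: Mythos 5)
Your proposal hinges on a ``free, length-preserving chord slide'': pivoting $p_ip_{i+1}$ when $v$ is adjacent to exactly one of the two endpoints, claiming this keeps $P$ an induced path and merely moves the chord. That local move is not what pivoting does. Taking $u=p_i$, $w=p_{i+1}$ with $v$ adjacent to $p_i$ only, one has $S_1\supseteq\{p_{i-1},v\}$, $S_2\supseteq\{p_{i+2}\}$ and $S_3=\emptyset$, so the pivot \emph{adds the chord $p_{i-1}p_{i+2}$} (and toggles $vp_{i+2}$); the vertices of $P$ no longer induce a path, and there is no way to restore it without deleting vertices. The only clean operations available are of the form ``pivot an edge of the path and delete its endpoints,'' which shortens the path by $2$ and toggles $v$'s adjacency to $p_{i-1}$ and/or $p_{i+2}$ according to which of the two pivoted vertices $v$ sees --- exactly the moves catalogued in Lemma~\ref{lem:evenandodd}. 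So the first stage of your plan (slide chords together at no cost, then merge) fails at the outset, and with it the assertion that you can always reduce $v$'s neighborhood on $P$ to the two endpoints while keeping the cycle length at least $k$ and of the parity of $n$.

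Even if the local moves are corrected, what you defer as ``routine bookkeeping'' is precisely the content of the paper's proof, and your sketch contains an explicit admission of the gap (``if it removes an odd number, it does so in a way I will pair up''), with no argument that such pairing is possible. The paper instead analyzes the sequence of gaps $i_{j+1}-i_j$ between consecutive neighbors of $v$: a gap of length at least $k-2$ with the parity of $k$ gives a long induced cycle (then Lemma~\ref{lem:cycletocycle}); a long gap of the wrong parity is repaired using an odd gap elsewhere via part (1) of Lemma~\ref{lem:evenandodd}; at least $6k-2$ odd gaps yield a $(1,6k-2)$-fan, hence $F_{2k}$ by Lemma~\ref{lem:evenandodd2} and then a suitable cycle by part (2); a run of $k-1$ consecutive even gaps is handled by part (3); and if none of these occurs then $t\le(k-1)(6k-2)$ and $n\le(k-3)((k-1)(6k-2)-1)$, contradicting the hypothesis. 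In particular the numerical threshold comes from this counting of gaps, not from ``about $k$ merges each costing $O(k^2)$.'' Your final step (shortening an induced cycle by $2$ per pivot, Lemma~\ref{lem:cycletocycle}) is correct, but the route to an induced cycle of the right length and parity is missing, and that is the heart of the proposition.
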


We remark that the parity condition in Proposition~\ref{prop:pathtocycle} cannot be removed as $C_n$ has no pivot-minor isomorphic to $C_{k}$ if $n\not\equiv k\pmod 2$.

To prove Proposition~\ref{prop:pathtocycle}, we prove some useful lemmas. 

\begin{lemma}\label{lem:cycletocycle}
Every induced cycle of length $k+2$ contains an induced cycle of length $k$ as a pivot-minor.
\end{lemma}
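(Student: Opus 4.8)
The plan is to exhibit an explicit sequence of pivots on an induced cycle $C$ of length $k+2$ that produces an induced cycle of length $k$. Label the vertices of $C$ as $v_1 v_2 \cdots v_{k+2} v_1$ in cyclic order. The idea is to ``cut a corner'' of the cycle: pivot along a single edge so that a short sub-path gets shortened, and then delete the two vertices that were used for the pivot. Concretely, I would pivot the edge $v_2 v_3$. Using the description of $G\wedge uv$ in terms of the partition $S_1, S_2, S_3$ of the common and private neighbourhoods (as recalled in Section~\ref{sec:prelim}), for the edge $v_2 v_3$ on an induced cycle we have $S_1 = \{v_1\}$, $S_2 = \{v_4\}$, and $S_3 = \emptyset$. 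Pivoting therefore toggles adjacency between $v_1$ and $v_4$: since $v_1 v_4 \notin E(C)$ (as $k+2 \ge 5$, so these are not consecutive), the edge $v_1 v_4$ is \emph{added}, and the labels of $v_2, v_3$ are swapped but no other adjacency changes. Now in $C \wedge v_2 v_3$ the vertices $v_1, v_4, v_5, \ldots, v_{k+2}$ induce a cycle of length $k$ (with $v_1 v_4$ the new edge, and $v_2, v_3$ now hanging off with degree at most two but irrelevant once deleted). Deleting $v_2$ and $v_3$ yields an induced cycle of length $k$ as a pivot-minor.

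The one case requiring care is the smallest one, $k = 3$, where $C$ has length $5$. Then $v_1 v_2 v_3 v_4 v_5 v_1$, and pivoting $v_2 v_3$ gives $S_1 = \{v_1\}$, $S_2 = \{v_4\}$, $S_3 = \emptyset$; the new edge $v_1 v_4$ is added, $v_1 v_5$ and $v_4 v_5$ are untouched, so $\{v_1, v_4, v_5\}$ induces a triangle $C_3$ after deleting $v_2, v_3$. So the argument goes through uniformly for all $k \ge 3$; I would simply note that $k+2 \ge 5$ guarantees $v_1 \ne v_4$ and $v_1 v_4 \notin E(C)$, which is all that is needed.

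I do not expect a genuine obstacle here, since this is a short explicit computation. The only thing to be careful about is to verify that the pivot affects \emph{no other} adjacencies — in particular that $v_1$ does not gain spurious edges to $v_5, \ldots, v_{k+2}$ and $v_4$ does not gain spurious edges either. This is immediate from the $S_i$-description: the only pair in distinct blocks is $(v_1, v_4)$, so that is the unique toggled adjacency. Hence after deleting the two pivot endpoints the remaining $k$ vertices induce exactly a $k$-cycle, completing the proof.
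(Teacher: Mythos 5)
Your proof is correct and follows the same approach as the paper: pivot one edge of the cycle and delete its two endpoints, which toggles exactly one chord (between the two outer neighbours) and shortens the induced cycle by $2$. The paper states this in a single sentence; your verification via the $S_1,S_2,S_3$ description is just a more detailed writeup of the same computation.
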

\begin{proof}
By pivoting an edge $xy$ on an induced cycle and deleting $x, y$ from the resulting graph, we obtain an induced cycle that is of length $2$ shorter than the initial one. 
\end{proof}

\begin{lemma}\label{lem:evenandodd}
Let $G$ be a graph with a vertex $v$ such that $G\setminus v$ is an induced path $P:=p_0p_1 \cdots p_n$. 
Let $i_1=0<i_2<i_3<\cdots<i_t=n$ be a sequence of integers such that $p_{i_1}$, $\ldots$, $p_{i_t}$ are all neighbors of $v$ on $P$.
Then the following hold.
\begin{enumerate}[(1)]
\item If $k:=i_2-i_1>1$ and $i_2\equiv i_3\equiv \cdots \equiv i_{t-1}\not\equiv i_t \pmod 2$, then $G$ contains a pivot-minor isomorphic to $C_{k+1}$. 
\item For a positive integer $k$, if $t\ge 4k$ and $i_j=j-1$ for all $j\in \{1,2,\ldots,t\}$, then $G$ contains a pivot-minor isomorphic to $C_{2k+1}$ and a pivot-minor isomorphic to  $C_{2k+2}$.
\item For a positive integer $k$, if $t\ge 2k+1$ and $i_j=2(j-1)$ for all $j\in\{1, \ldots, t-1\}$, then $G$ contains a pivot-minor isomorphic to $C_{2k+2}$.
Moreover, if $i_{t}-i_{t-1}$ is odd, then $G$ contains a pivot-minor isomorphic to $C_{2k+1}$.
\end{enumerate}
\end{lemma}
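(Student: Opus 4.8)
The plan is to prove the three parts of Lemma~\ref{lem:evenandodd} by explicit pivoting arguments, in each case reducing the given configuration to an induced cycle of the desired length and then, if necessary, invoking Lemma~\ref{lem:cycletocycle} to step down the length by multiples of $2$.

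For part~(1), the key observation is that the cycle $v p_{i_1} p_{i_1+1} \cdots p_{i_2} v$ is \emph{almost} induced: its only chords come from the vertices $p_{i_1+1},\ldots,p_{i_2-1}$ being possibly adjacent to $v$, but by hypothesis these are not neighbors of $v$, so in fact $v p_0 p_1 \cdots p_k v$ (with $k = i_2-i_1$) is already an induced cycle of length $k+1$ \emph{inside $G$} once we delete $p_{i_2+1},\ldots,p_n$. So part~(1) should follow essentially immediately: delete all $p_j$ with $j > i_2$, and observe that what remains is $C_{k+1}$ — wait, but then the parity hypothesis $i_2\equiv\cdots\equiv i_{t-1}\not\equiv i_t$ would be irrelevant. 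So the real content must be that we are \emph{not} allowed to delete down to an induced cycle directly (perhaps $G$ is not assumed to be $G\setminus v$ plus $v$ only on an isolated path — re-reading, $G\setminus v$ \emph{is} the induced path, so deletion does work). I suspect the intended reading is subtler: the parity condition lets one pivot along $P$ to contract the long tail $p_{i_2}\cdots p_{i_t}$ into nothing useful being created, or it is used elsewhere. I would first pin down exactly why a naive deletion is insufficient, then replace it with a pivoting sequence: pivot successive edges $p_{i_t-1}p_{i_t}$, then the new penultimate edge, etc., each pivot removing two vertices from the tail while preserving that $v$ sees exactly $p_{i_1}$ and the current last vertex; the parity condition $i_t \not\equiv i_{t-1}$ guarantees the tail has odd length so that this process terminates leaving $v$ adjacent to $p_{i_1}$ and to one endpoint, yielding $C_{k+1}$.

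For part~(2), the configuration is $v$ adjacent to $t\ge 4k$ consecutive vertices $p_0,p_1,\ldots,p_{t-1}$ of $P$. Here I would work locally: within the block $p_0\cdots p_4$ together with $v$, pivoting the edge $p_1p_2$ and deleting $p_1,p_2$ removes the edges $vp_1, vp_3$ and leaves $v$ adjacent to $p_0$ and $p_4$ (via the path edge $p_3p_4$ being unaffected — need to check adjacencies carefully), effectively turning a run of $4$ consecutive neighbors into a ``gap of $2$'' pattern, i.e.\ converting the fully-attached fan into the every-other-vertex configuration of part~(3) but with roughly half as many spokes. Iterating, $t\ge 4k$ consecutive neighbors yield $\ge 2k$ spokes at distance $2$, and then part~(3) produces $C_{2k+1}$ and $C_{2k+2}$. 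Alternatively, and perhaps more cleanly, I would directly extract a short induced cycle and then apply Lemma~\ref{lem:cycletocycle}: the triangle or $4$-cycle on $\{v,p_0,p_1\}$ or $\{v,p_0,p_1,p_2\}$ is too short, so instead take $v p_0 p_1 \cdots p_{2k} v$ — but this has chords $vp_1,\ldots,vp_{2k-1}$, which must be pivoted away. The routine but fiddly part is choosing which edges to pivot so that chords disappear two at a time without destroying the cycle structure; I will suppress these calculations.

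For part~(3), $v$ is adjacent to $p_0, p_2, p_4, \ldots, p_{2(t-2)}$ and then to $p_{i_t}$ with $i_t > 2(t-2)$. Consider the cycle $C = v\, p_0\, p_1\, \cdots\, p_{2(t-1)}\ldots$; the natural approach is to pivot each edge $p_{2j-1}p_{2j}$ (which lies on $P$ and is disjoint from $v$'s neighborhood in the relevant range, since $p_{2j-1}\notin N(v)$) and delete its endpoints, which contracts each length-$2$ segment $p_{2j-2}p_{2j-1}p_{2j}$ and removes one spoke, leaving $v$ adjacent to the surviving $p_{2j-2}$'s which now form a path; after enough such pivots we are left with $v$ plus a short induced path on which $v$ has two neighbors that are its endpoints — i.e.\ an induced cycle — of length $2k+2$ when we keep $2k+1$ spokes, and of length $2k+1$ when the final segment $p_{2(t-2)}\cdots p_{i_t}$ has odd length (contributing an odd rather than even number of path-vertices). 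The key bookkeeping is that each pivot at an interior path-edge does not touch $v$, so $v$'s neighborhood is modified only by the deletions; tracking the parity of the total surviving path length gives the two cases. I expect part~(1) — understanding precisely what role the parity hypothesis plays and why deletion alone does not suffice — to be the main conceptual obstacle, while parts~(2) and~(3) are mechanical once the right pivot sequence is identified; the bulk of the write-up will be verifying adjacencies after each pivot via the $S_1,S_2,S_3$ description of pivoting recalled in Section~\ref{sec:prelim}.
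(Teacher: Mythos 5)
Your proposal has the right general flavor (shorten the path by pivoting path edges and deleting their endpoints, use parity to control the spokes), which is indeed what the paper does, but each of the three parts contains a concrete error in the pivot bookkeeping, and the conceptual point you flagged but left unresolved in part (1) is in fact the crux. The cycle $vp_0p_1\cdots p_{i_2}v$ obtained by deleting everything beyond $p_{i_2}$ has $k+2$ vertices, not $k+1$: it is a $C_{k+2}$, and since a pivot-minor of a cycle is a cycle of the same parity (pivoting an edge of an induced cycle and deleting its ends shortens it by exactly $2$; cf.\ Lemma~\ref{lem:cycletocycle} and the footnote in the introduction), no manipulation of that short cycle alone can ever yield $C_{k+1}$. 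That parity obstruction is precisely why the hypothesis $i_2\equiv\cdots\equiv i_{t-1}\not\equiv i_t\pmod 2$ is needed. The paper's argument inducts on $i_t-i_2$: pivoting the last path edge creates the spoke $vp_{i_t-2}$ (the parity condition guarantees $p_{i_t-2}$ was not already a spoke, so the toggle is a creation, not a destruction), and when $i_t-i_2=1$ the pivot of $p_{i_2}p_{i_t}$ creates $vp_{i_2-1}$, giving the induced cycle $vp_0\cdots p_{i_2-1}v$ on $k+1$ vertices. Your fallback sketch asserts a false invariant --- after these pivots $v$ still sees all intermediate spokes, not ``exactly $p_{i_1}$ and the current last vertex'' --- and as written it terminates at the cycle through $p_{i_2}$, i.e.\ at $C_{k+2}$ again.

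In parts (2) and (3) the stated effects of your pivots are incorrect. In configuration (2), pivoting $p_1p_2$ has $S_1=\{p_0\}$, $S_2=\{p_3\}$, $S_3=\{v\}$, so it removes the spokes $vp_0$ and $vp_3$ and adds $p_0p_3$; it does not remove $vp_1,vp_3$, and $v$ does not stay adjacent to $p_0$, so the iteration does not produce the every-other-vertex pattern of (3), and your alternative route is explicitly suppressed. In configuration (3), pivoting $p_{2j-1}p_{2j}$ has $S_1=\{p_{2j-2}\}$ and $S_2=\{p_{2j+1},v\}$, so it toggles $vp_{2j-2}$ \emph{off}: each such step kills two spokes, contradicting both your claim that interior pivots ``do not touch $v$'' and your claim that the surviving even-indexed vertices remain adjacent to $v$; the ``moreover'' case ($C_{2k+1}$ when $i_t-i_{t-1}$ is odd) is not actually handled, whereas the paper obtains it by reducing to part (1) via the surviving odd interval. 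The paper's proof instead chooses the spaced pivots $p_{4j-2}p_{4j-1}$ in (2) and $p_2p_3,p_6p_7,\ldots$ in (3), precisely so that the spokes destroyed are the ones not needed for the final induced cycle $vp_0p_1p_4p_5\cdots v$. Since the entire content of this lemma is that bookkeeping, the proposal as it stands does not constitute a proof.
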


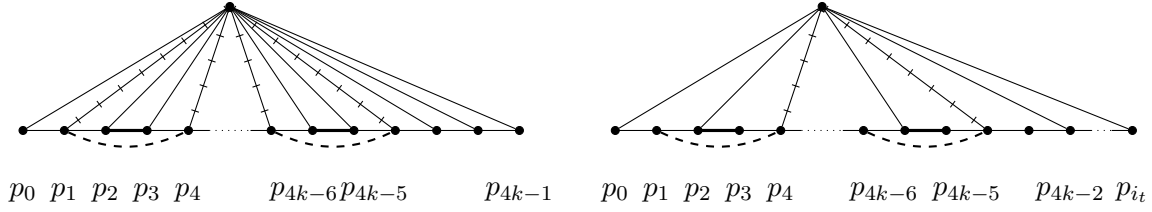
\begin{figure}[t]
  \centering
  \tikzstyle{v}=[circle, draw, solid, fill=black, inner sep=0pt, minimum width=3pt]
      \begin{tikzpicture}[scale=0.55]
    \foreach \x in {1,...,5}
    {
      \node (v\x) at (\x, 1) [v] {};
    }
    \foreach \x in {7,...,13}
    {
      \node (v\x) at (\x, 1) [v] {};
    }
    \draw (1,1)--(5.5,1);
    \draw (6.5,1)--(13,1);
    \draw[dotted] (5.5,1)--(6.5,1);
   \draw[very thick] (3,1)--(4,1);
   \draw[very thick] (8,1)--(9,1);
      \node (w) at (6, 4) [v] {};

\path (v2) edge [thick, bend left=-25, dashed]  (v5);   
\path (v7) edge [thick, bend left=-25, dashed]  (v10);   

\draw (1, -.5) node{$p_0$};
\draw (2, -.5) node{$p_1$};
\draw (3, -.5) node{$p_2$};
\draw (4, -.5) node{$p_3$};
\draw (5, -.5) node{$p_4$};

\draw (7.8, -.5) node{$p_{4k-6}$};
\draw (9.5, -.5) node{$p_{4k-5}$};
\draw (13, -.5) node{$p_{4k-1}$};

   \foreach \x in {1,3,4,8,9,11,12,13}
    {
\draw (w)--(v\x);
}
   \foreach \x in {2, 5, 7, 10}
    {
\draw (w)--(v\x);
\draw[snake=ticks, segment amplitude=1.5pt] (w) -- (v\x);
}

  \end{tikzpicture}\quad
        \begin{tikzpicture}[scale=0.55]
    \foreach \x in {1,...,5}
    {
      \node (v\x) at (\x, 1) [v] {};
    }
    \foreach \x in {7,...,12}
    {
      \node (v\x) at (\x, 1) [v] {};
    }
      \node (v13) at (13.5, 1) [v] {};
    \draw (1,1)--(5.5,1);
    \draw (6.5,1)--(12.5,1);
    \draw[dotted](12.5,1)--(13,1);
    \draw (13, 1)--(13.5,1);
    \draw[dotted] (5.5,1)--(6.5,1);
   \draw[very thick] (3,1)--(4,1);
   \draw[very thick] (8,1)--(9,1);
      \node (w) at (6, 4) [v] {};

\path (v2) edge [thick, bend left=-25, dashed]  (v5);   
\path (v7) edge [thick, bend left=-25, dashed]  (v10);   

\draw (1, -.5) node{$p_0$};
\draw (2, -.5) node{$p_1$};
\draw (3, -.5) node{$p_2$};
\draw (4, -.5) node{$p_3$};
\draw (5, -.5) node{$p_4$};

\draw (7.5, -.5) node{$p_{4k-6}$};
\draw (9.5, -.5) node{$p_{4k-5}$};
\draw (12, -.5) node{$p_{4k-2}$};
\draw (13.5, -.5) node{$p_{i_t}$};

   \foreach \x in {1,3,8,12,13}
    {
\draw (w)--(v\x);
}
   \foreach \x in {5, 10}
    {
\draw (w)--(v\x);
\draw[snake=ticks, segment amplitude=1.5pt] (w) -- (v\x);
}

  \end{tikzpicture}
  \caption{Configurations in (2) and (3) of Lemma~\ref{lem:evenandodd}.}
  \label{fig-prop-odd-even}
\end{figure}

\begin{proof}
(1) We proceed by induction on $i_t-i_2$. 

If $i_t-i_2=1$, then we can create the edge $vp_{i_2-1}$ by pivoting the edge $p_{i_2}p_{i_t}$. Since $p_{i_2},p_{i_t}$ have no neighbors in $\{p_{i_1}, \ldots, p_{i_2-2}\}$, $vp_{0}p_{1} \cdots p_{i_2-1}v$ is an induced cycle of length $k+1$ in $G\wedge p_{i_2}p_{i_t}$. 

If $i_t-i_2\ge 3$, then we can create the edge $vp_{i_t-2}$ by pivoting $p_{i_{t-1}}p_{i_t}$.
Then the (new) neighborhood of $v$ on the path from $p_{i_1}$ to $p_{i_t-2}$ satisfies the condition of our assumption as the new edge $vp_{i_t-2}$ divides either an even interval into two odd intervals or an odd interval into an odd interval and an interval of length $2$.
Thus, by the induction hypothesis, $G\wedge p_{i_{t-1}}p_{i_t}$ contains a pivot-minor isomorphic to $C_{k+1}$ and so does $G$.

\medskip
(2) For $j\in \{1, \ldots, t-3\}$, if we pivot $p_{j}p_{j+1}$, then the edges $vp_{j-1}$, $vp_{j+2}$ are removed and $p_{j-1}p_{j+2}$ is added.
If $k\ge 2$, then by pivoting $p_{4j-2}p_{4j-1}$ and removing the vertices $p_{4j-2}$ and $p_{4j-1}$ for all $j\in\{1, \ldots, k-1\}$, we can obtain an induced cycle 
\[vp_{0}p_{1}p_{4}p_{5} \cdots p_{4k-4}p_{4k-3}v\] 
of length $2k+1$. If $k=1$, then $vp_0p_1$ is an induced cycle of length $3=2k+1$.
Now, by pivoting $p_{4k-2}p_{4k-1}$, we can remove the edge $vp_{4k-3}$ and thus, we obtain an induced cycle of length $2k+2$, which is $vp_{0}p_{1} \cdots p_{4k-4}p_{4k-3}p_{4k-2}v$.

\medskip
(3) For $j\in\{1, \ldots, t-3\}$, if we pivot $p_{2j}p_{2j+1}$, then the edge $vp_{2j+2}$ is removed and $p_{2j-1}p_{2j+2}$ is added.
Therefore, pivoting $p_2p_3, p_6p_7,p_{10}p_{11},\ldots,p_{4k-6}p_{4k-5}$ and removing the vertices $p_2$, $p_3$, $p_6$, $p_7$, $p_{10}$, $p_{11}$, $\ldots$, $p_{4k-6},p_{4k-5}$ creates an induced cycle
 \[vp_{0}p_{1}p_{4}p_{5} \cdots p_{4k-6}p_{4k-5}p_{4k-4}v\] of length $2k+2$.
If $i_t-i_{t-1}$ is odd, then the last odd interval is still an odd interval after pivotings, and by (1), it also contains a pivot-minor isomorphic to a cycle of length $2k+1$.
\end{proof}

For positive integers $k, \ell$, a \emph{$(k,\ell)$-fan} is a graph $F$ with a specified vertex $p$, called the \emph{central vertex},  such that 
\begin{itemize}
\item $F\setminus p$ is a path $p_0p_1 \cdots p_{n}$, and 
let $i_1=0<i_2<i_3<\cdots<i_t=n$ be a sequence of integers such that $p_{i_1}$, $\ldots$, $p_{i_t}$ are all neighbors of $v$ on $P$,
\item $i_{j+1}-i_j$ is odd for $j\in\{1, \ldots, k\}$,
\item $\abs{ j\in\{1, \ldots, t-1\}:i_{j+1}-i_j\text{ is odd}\}       }\ge \ell$.
\end{itemize}

\begin{lemma}\label{lem:evenandodd2}
  Every $(k,\ell)$-fan   contains a pivot-minor isomorphic to $F_{k+\lfloor (\ell-k)/3 \rfloor}$.
\end{lemma}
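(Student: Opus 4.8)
The plan is to put the fan into a normal form and then grow, by induction, the initial block of ``short'' gaps that already displays a fan. Write $v$ for the central vertex, let $P=p_0p_1\cdots p_n$ denote $F\setminus v$, and keep the notation $i_1=0<i_2<\cdots<i_t=n$ for the positions on $P$ of the neighbours of $v$; call the numbers $i_{j+1}-i_j$ the \emph{gaps} (so a gap of length $1$ means two consecutive neighbours of $v$).

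\emph{Normal form.} Inside any gap I choose two consecutive vertices $p_{x+1},p_{x+2}$ of its interior: both are non-neighbours of $v$, so pivoting the edge $p_{x+1}p_{x+2}$ and deleting $p_{x+1},p_{x+2}$ creates the edge $p_xp_{x+3}$, shortens that gap by two (hence preserves its parity), touches no edge incident with $v$, and leaves $F\setminus v$ an induced path. Iterating, I may assume every odd gap has length $1$ and every even gap has length $2$; this is still a $(k,\ell)$-fan with centre $v$. In particular the first $k$ gaps have length $1$, so $v$ is adjacent to the $k+1$ consecutive vertices $p_{i_1},\dots,p_{i_{k+1}}$ and $F$ contains an induced $F_{k+1}$. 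Since $F_{k+1}$ contains $F_j$ as an induced subgraph for every $j\le k+1$, this already proves the lemma whenever $\lfloor(\ell-k)/3\rfloor\le 1$, that is, whenever $\ell-k\le 5$.

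\emph{Induction.} I induct on $\ell-k$, the case $\ell-k\le 5$ being the paragraph above. For $\ell-k\ge 6$ it suffices to exhibit inside $F$, as a pivot-minor with the same central vertex $v$, a $(k+1,\ell-2)$-fan: the induction hypothesis then supplies a pivot-minor isomorphic to $F_{(k+1)+\lfloor(\ell-k-3)/3\rfloor}=F_{k+\lfloor(\ell-k)/3\rfloor}$ (using $\lfloor(\ell-k-3)/3\rfloor=\lfloor(\ell-k)/3\rfloor-1$), and a pivot-minor of a pivot-minor is a pivot-minor. Consider the gap $g_{k+1}$ immediately after the core. If it has length $1$, then $F$ is already a $(k+1,\ell)$-fan. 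Otherwise $g_{k+1}$ has length $2$; since at least $\ell-k\ge 6$ of the gaps in positions $>k$ have length $1$, let $m>k$ be smallest with $g_m$ of length $1$, so $g_{k+1},\dots,g_{m-1}$ all have length $2$. The elementary move is: when $p_a,p_{a+1}$ are consecutive on the current path and both neighbours of $v$, pivot the edge $p_ap_{a+1}$ and delete $p_a,p_{a+1}$; this removes two vertices, joins the two other neighbours of $p_a$ and $p_{a+1}$ by a new path edge, toggles the adjacency of $v$ with each of those two vertices, and changes nothing else, so $F\setminus v$ stays an induced path. Applying this move at the length-$1$ gap $g_m$ converts the unique non-neighbour of $v$ inside the even gap $g_{m-1}$ into a neighbour of $v$: it slides the length-$1$ gap one step to the left, from position $m$ to position $m-1$, perturbing the adjacency of $v$ only at the single vertex two steps to the right of $p_{i_{m+1}}$. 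Repeating slides the length-$1$ gap to position $k+1$; the block of length-$1$ gaps then has length $k+1$, so $v$ is adjacent to $k+2$ consecutive path vertices and $F$ has a pivot-minor that is a $(k+1,\ell')$-fan with centre $v$, where $\ell'\ge\ell-2$ because at most two neighbours of $v$ are destroyed along the way; finally I delete any non-neighbours of $v$ at the two ends of the path, so that the first and last path vertices are again neighbours of $v$ as the definition demands.

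\emph{The main obstacle} is the sliding step. One has to push the length-$1$ gap \emph{towards} the core from the right — pivoting an edge inside the core would apply the same elementary move but would toggle $v$ off a core vertex and shrink the fan we have built; one has to check that each pivot keeps $F\setminus v$ an induced path (the only new edges are the path edge $p_xp_{x+3}$ in the normal-form step, and the single path edge together with the two edges at $v$ in the elementary move); and — the quantitative heart — one has to show that the number of odd gaps drops by at most two each time the core gains one vertex, equivalently that one short gap can be bought at the front for the price of at most three odd gaps, which is precisely what the floor of $(\ell-k)/3$ records.
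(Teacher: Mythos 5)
Your overall plan mirrors the paper's: renormalise every gap to length $1$ or $2$ by pivoting two adjacent non-neighbours of $v$, then upgrade a $(k,\ell)$-fan to a $(k+1,\ell-2)$-fan as a pivot-minor and recurse, with $\lfloor(\ell-k)/3\rfloor$ recording the cost; the base case, the description of the elementary move (pivoting two consecutive neighbours of $v$ toggles $v$'s adjacency at the two outside path vertices and re-closes the path), and the arithmetic $\lfloor(\ell-k-3)/3\rfloor=\lfloor(\ell-k)/3\rfloor-1$ are all fine. The gap is exactly at what you call ``the quantitative heart'': the claim that after sliding the leftmost length-$1$ gap from position $m$ down to position $k+1$ you are left with a $(k+1,\ell')$-fan with $\ell'\ge\ell-2$ is asserted, not proved, and the justification offered (``at most two neighbours of $v$ are destroyed along the way'') is false. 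Each slide deletes the two neighbours of $v$ spanning the current length-$1$ gap and toggles one more vertex (the path-successor of $p_{i_{m+1}}$, by the way, not a vertex two steps away), and the number of slides equals the number of length-$2$ gaps between the core and the first length-$1$ gap, which is not bounded in terms of $k$ and $\ell$; so the number of destroyed neighbours is unbounded. What must be controlled is the number of \emph{odd gaps}, and a single slide can destroy two of them: in your normal form the local gap pattern $(2,1,1,1)$ becomes $(1,2)$, because the right-hand toggle switches a neighbour off and merges gaps. A priori this loss of two could recur at every one of the many slides, and then $\ell'\ge\ell-2$ simply does not follow.

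The claim is in fact salvageable, but only by bookkeeping you do not supply: recording the post-core gaps as a word over $\{1,2\}$, a slide acts by $(2,1,2)\mapsto(1,1,1)$ (gain two odd gaps), $(2,1,1,2)\mapsto(1,3)$, renormalised to $(1,1)$ (no change), and $(2,1,1,1)\mapsto(1,2)$ (loss of two); after a losing slide the word beyond the remaining $2$'s begins with $1,2$, so the very next slide is forced to be a gaining one, hence losses never accumulate and the net loss over the whole phase is at most two (with a separate easy check at the right end of the path, where a trailing non-neighbour must be deleted). Without this observation your induction step is unproven. Note that the paper sidesteps the issue entirely: it inducts on the number of vertices and only ever pivots inside the window $p_{k+1},\dots,p_{k+4}$ immediately to the right of the block of consecutive neighbours, so each step is one of finitely many local configurations whose effect on the odd-gap count is checked on the spot, and no control of a long sliding phase is needed. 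As written, your proposal sets up the right strategy but leaves its key quantitative step as an acknowledged obstacle, so it is not yet a proof.
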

\begin{proof}
  Let $m=k+\lfloor (\ell-k)/3 \rfloor$.
  Let $G$ be the $(k,\ell)$-fan with the central vertex $v$ such that $G\setminus v$ is an induced path $P:=p_0p_1 \cdots p_n$ and let $i_1=0<i_2<i_3<\cdots<i_t=n$ be a sequence of integers such that $p_{i_1}$, $\ldots$, $p_{i_t}$ are all neighbors of $v$ on $P$. 
 
  We proceed by induction on $\abs{V(G)}-k$.
  If there exists $j$ such that both $p_j$ and $p_{j+1}$ are non-adjacent to $v$, 
  then $G\wedge p_jp_{j+1}\setminus p_jp_{j+1}$ is a $(k,\ell)$-fan, thus having a pivot-minor isomorphic to $F_m$ by the induction hypothesis. Thus we may assume that $i_{j+1}-i_j\in\{1,2\}$ for all $j\in \{1,2,\ldots,t-1\}$. If $\ell-k<3$, then $G$ contains an induced subgraph isomorphic to $F_m$. Thus we may assume that $\ell-k\ge 3$. 
  
  If $i_{k+2}-i_{k+1}$ is odd, then $G$ is a $(k+1,\ell)$-fan and therefore by the induction hypothesis, $F_m$ is isomorphic to a pivot-minor of $G$. Thus we may assume that $i_{k+2}-i_{k+1}=2$ and therefore $i_j=j-1$ for all $j\in\{1,2,\ldots,k+1\}$ and $i_{k+2}=k+2$. 

  If $p_{k+3}$ is non-adjacent to $v$, then $p_{k+4}$ is adjacent to $v$ and $G\wedge p_{k+2}p_{k+3}\setminus p_{k+2}p_{k+3}$ is a  $(k+1,\ell)$-fan, proving this lemma by the induction hypothesis. Thus we may assume that $p_{k+3}$ is adjacent to $v$ and $i_{k+3}=k+3$.

  If $p_{k+4}$ is non-adjacent to $v$, then $G\wedge p_{k+2}p_{k+3}\setminus p_{k+2}p_{k+3}$ is a $(k+1,\ell)$-fan. Thus, we may assume that $p_{k+4}$ is adjacent to $v$ and $i_{k+4}=k+4$.

  Now, $G\wedge p_{k+2}p_{k+3}\setminus p_{k+2}\setminus p_{k+3}$ is a $(k+1,\ell-3)$-fan, thus having a pivot-minor isomorphic to $F_m$ by the induction hypothesis.
\end{proof}

Now we are ready to prove Proposition~\ref{prop:pathtocycle}.

\begin{proof}[Proof of Proposition~\ref{prop:pathtocycle}]
Let $P:=p_0p_1 \cdots p_n$ and let $i_1=0<i_2<i_3<\cdots<i_t=n$ be a sequence of integers such that $p_{i_1}$, $\ldots$, $p_{i_t}$ are all neighbors of $v$ on $P$.

If $i_{j+1}-i_j\ge k-2$ and $i_{j+1}-i_j\equiv k\pmod 2$ for some $j$, then $G$ has a pivot-minor isomorphic to $C_k$ by Lemma~\ref{lem:cycletocycle}.

If $i_{j+1}-i_j\ge k-2$ and $i_{j+1}-i_j\not\equiv k\pmod 2$ for some $j$, then there exists $m$ such that $i_{m+1}-i_m$ is odd, because $n\equiv k\pmod 2$. By symmetry, we may assume that $m>j$. We may assume that $m$ is chosen to be minimum.
Then, $i_{j+1}\equiv i_{j+2}\equiv \cdots \equiv i_m\not\equiv i_{m+1} \pmod 2$ and therefore $G$ contains a pivot-minor isomorphic to $C_k$ by (1) of Lemma~\ref{lem:evenandodd}.
Thus we may assume that $i_{j+1}-i_j\le k-3$ for all $j$
and therefore $n\le (k-3)(t-1)$.

If there exist at least $6k-2$ values of $j$ such that $i_{j+1}-i_j$ is odd, then $G$ has a $(1,6k-2)$-fan as an induced subgraph and therefore by Lemma~\ref{lem:evenandodd2}, $G$ has a pivot-minor isomorphic to $F_{2k}$.
By (2) of Lemma~\ref{lem:evenandodd}, if $k$ is even, then $F_{2k}$ contains a pivot-minor isomorphic to $C_{k+2}$.
If $k$ is odd, then $F_{2(k-1)}$ contains a pivot-minor isomorphic to $C_k$ by (2) of Lemma~\ref{lem:evenandodd}.
Therefore we may assume that there are at most $6k-3$ values of $j$ such that $i_{j+1}\not\equiv i_j\pmod 2$.

Suppose that $i_j\equiv i_{j+1}\equiv i_{j+2}\equiv \cdots\equiv i_{j+k-1}\pmod 2$ for some $j\le t-k+1$. If $k$ is even, then by (3) of Lemma~\ref{lem:evenandodd}, $G$ has a pivot-minor isomorphic to $C_k$. If $k$ is odd, then there exists $m$ such that $i_{m+1}-i_m$ is odd. By (3) of Lemma~\ref{lem:evenandodd}, $G$ has a pivot-minor isomorphic to $C_k$.
Thus we may assume that at least one of $i_{j+1}-i_j, i_{j+2}-i_{j+1},\ldots,i_{j+k-1}-i_j$ is odd for all $j\le t-k+1$.
We conclude that $t\le (k-1)(6k-2)$
and therefore $n\le (k-3)((k-1)(6k-2)-1)=6 k^3-26 k^2+25 k-3$.
\end{proof}

\subsection{Proof of Theorem~\ref{thm:mainpivotminor}}\label{sec:nockpmchibound}

\begin{proof}[Proof of Theorem~\ref{thm:mainpivotminor}]
Let $q$ and $k$ be positive integers with $k\ge 3$.
If $k=3$, then graphs having no pivot-minor isomorphic to $C_3$ are bipartite graphs, and we can color such graphs with $2$ colors.
We may assume that $k\ge 4$.
Let $\ell:=6 k^3-26 k^2+25 k-2$.
Let $G$ be a graph such that it has no pivot-minor isomorphic to $C_k$.
We claim that $\chi(G)\le 2(\ell+1)^{q-1}$ if $\omega(G)\le q$.

We may assume that $G$ is connected as we can color each connected component separately.
Let $v$ be a vertex of $G$ and for $i\ge 0$, let $L_i$ be the set of all vertices of $G$ that are at distance $i$ away from $v$. 
If each $L_j$ is $(\ell+1)^{q-1}$-colorable, then $G$ is $2(\ell+1)^{q-1}$-colorable. 
By Theorem~\ref{thm:gyarfas}, we may assume that there exists a level $L_n$ 
containing an induced path of length $t\in \{\ell, \ell+1\}$ where $t$ and $k$ have the same parity.
Let $P:=p_0p_1p_2 \cdots p_t$. 
If $n=1$, then by Proposition~\ref{prop:pathtocycle}, 
$G[V(P)\cup \{v\}]$ contains a pivot-minor isomorphic to $C_k$.
We may assume that $n\ge 2$.

\begin{figure}
  \centering
  \tikzstyle{v}=[circle, draw, solid, fill=black, inner sep=0pt, minimum width=3pt]
  \begin{tikzpicture}
    \foreach \x in {0,...,6}
    {
      \node (v\x) at (\x,0) [v] {};
    }
    \node at (0,0) [v,label=below:$p_0$] {};
    \node at (1,0) [v,label=below:$p_1$] {};
    \node at (2,0) [v,label=below:$p_2$] {};
    \node at (4,0) [v] {};
    \node at (5,0) [v,label=below:$p_{t-1}$] {};
    \node at (6,0) [v,label=below:$p_t$] {};

      \node (x) at (1,1) [v,label=left:$x$] {};
      \node (y) at (5,1) [v,label=right:$y$] {};

    \draw [thick](v0)--(v3);
    \draw [thick, dotted](v3)--(v4);
    \draw [thick](v4)--(v6);
    \draw[thick] (x)--(v0);\draw[thick](y)--(v6);

    \draw [dotted] (x)--(1,0.5);
    \draw [dotted] (x)--(1.5,0.5);
    \draw [dotted] (x)--(2,0.5);
    \draw [dotted] (y)--(5,0.5);
    \draw [dotted] (y)--(4.5,0.5);
    \draw [dotted] (y)--(4,0.5);
    \draw [dotted](x)--(y);
    
    \node (p1) at (2,2) [v] {};
    \node (p2) at (2,3) [v] {};
    \node (p3) at (2,4) [v,label=left:$z_1$] {};
    \node (q1) at (4,2) [v] {};
    \node (q2) at (4,3) [v] {};
    \node (q3) at (4,4) [v,label=right:$z_2$] {};
    \node (z) at (3,5) [v,label=above:$z$] {};
    
    \draw [thick] (x)--(p1)--(p3)--(z)--(q3)--(q1)--(y);
    \draw [dotted] (p1)--(q1);
    \draw [dotted] (p2)--(q2);
    \draw [dotted] (p3)--(q3);

 \node at (3,-1) {$G_2$};
  \end{tikzpicture}\quad
   \begin{tikzpicture}
    \foreach \x in {0,...,6}
    {
      \node (v\x) at (\x,0) [v] {};
    }
    \node at (0,0) [v,label=below:$p_0$] {};
    \node at (1,0) [v,label=below:$p_1$] {};
    \node at (2,0) [v,label=below:$p_2$] {};
    \node at (4,0) [v] {};
    \node at (5,0) [v,label=below:$p_{t-1}$] {};
    \node at (6,0) [v,label=below:$p_t$] {};

      \node (x) at (1,1) [v,label=left:$x$] {};
      \node (y) at (5,1) [v,label=right:$y$] {};

    \draw [thick](v0)--(v3);
    \draw [thick, dotted](v3)--(v4);
    \draw [thick](v4)--(v6);
    \draw[thick] (x)--(v0);\draw[thick](y)--(v6);

    \draw [dotted] (x)--(1,0.5);d
    \draw [dotted] (x)--(1.5,0.5);
    \draw [dotted] (x)--(2,0.5);
    \draw [dotted] (y)--(5,0.5);
    \draw [dotted] (y)--(4.5,0.5);
    \draw [dotted] (y)--(4,0.5);
    \draw [dotted](x)--(y);
    
    \node (p1) at (2,2) [v] {};
    \node (p2) at (2,3) [v] {};

    \node (q1) at (4,2) [v] {};
    \node (q2) at (4,3) [v] {};
    \node (q3) at (4,4) [v,label=right:$z_2$] {};

    \draw [thick] (x)--(p1)--(p2)--(q3)--(q1)--(y);
    \draw [dotted] (p1)--(q1);
    \draw [dotted] (p2)--(q2);

    \draw [thick] (q3)--(p2);
 \node at (3,-1) {$G_2\wedge zz_1\setminus \{z, z_1\}$};

  \end{tikzpicture}
  \caption{Reducing the length of the path $x-P_1-z-P_2-y$ in Theorem~\ref{thm:mainpivotminor}.}
  \label{fig:reducelength}
\end{figure}
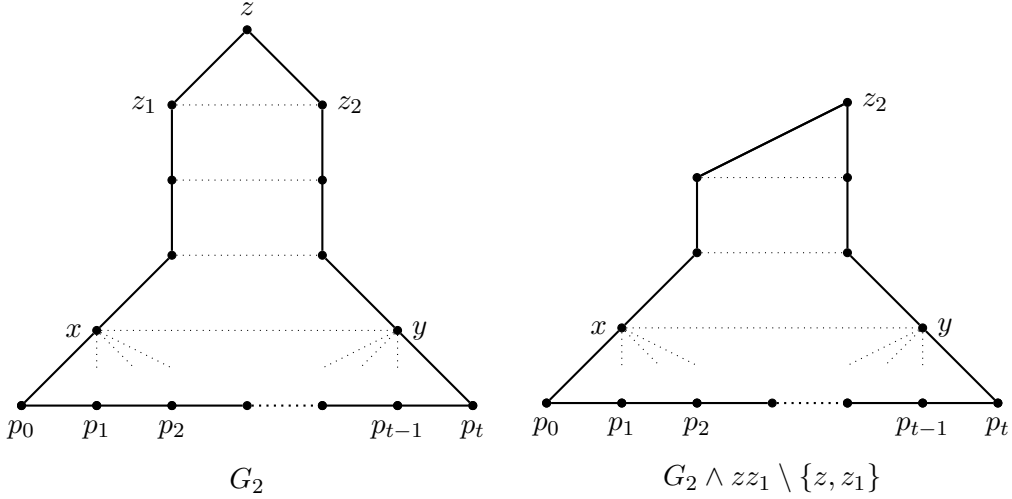

Let $x$ be a parent of $p_0$. If $x$ is adjacent to $p_t$, then by Proposition~\ref{prop:pathtocycle}, $G$ contains a pivot-minor isomorphic to $C_k$. 
We may assume that $x$ is not adjacent to $p_t$. Let $y$ be a parent of $p_t$. 
By the same reason, we can assume that $y$ is not adjacent to $p_0$.
We choose a first common ancestor of $x$ and $y$ in the leveling $L_0, \ldots, L_{n-1}$, and call it $z$.
Such a vertex $z$ exists because $v$ is a common ancestor of $x$ and $y$.
Let $P_1$ be the path from $x$ to $z$ in $G_1$ with exactly one vertex in each level, and similarly, let $P_2$ be the path from $y$ to $z$ in $G_1$ with exactly one vertex in each level. Since $P_1$ and $P_2$ have the same length, the path
$x-P_1-z-P_2-y$
has even length. Note that the path $x-P_1-z-P_2-y$ is not necessary an induced path in $G_1$ as there may be an edge between two vertices on the same level. See Figure~\ref{fig:reducelength}.

We claim that 
$G[V(P)\cup V(P_1)\cup V(P_2)]$ contains a pivot-minor isomorphic to $C_k$. 
Let $G_1:=G[V(P)\cup V(P_1)\cup V(P_2)]$.
Note that by construction, all internal vertices of the path $x-P_1-z-P_2-y$ have no neighbors on the path $P$. If there are at least two internal vertices in $x-P_1-z-P_2-y$, then let $z_1$ and $z_2$ be the neighbors of $z$ on $P_1$ and $P_2$, respectively.
We pivot $zz_1$ and remove $z$ and $z_1$ from $G_1$. 
Then $z_2$ becomes adjacent to the neighbor of $z_1$ on $P_1$ other than $z$.
This operation reduces the length of the path $x-P_1-z-P_2-y$ by $2$. Thus, we can do this until the remaining path has length exactly $2$. 
From this operation, we may assume that the path $x-P_1-z-P_2-y$ has length exactly $2$, which is $xzy$.

Now, we pivot $xz$ in $G_2$. Note that
\begin{itemize}
\item $p_0$ is adjacent to $x$ but not adjacent to $z$, and
\item $y$ is either a common neighbor of $x$ and $z$, or adjacent to $z$ but not to $x$.
\end{itemize}
From these two facts, $p_0y$ becomes an edge after pivoting $xz$. 
Since all vertices on $P$ are not adjacent to $z$, $V(P)$ still induces the same path after pivoting $xz$.
So, $y$ is adjacent to $p_0$ and $p_t$ in $G_2\wedge xz$, and by Proposition~\ref{prop:pathtocycle}, $G_2\wedge xz$ contains a pivot-minor isomorphic to $C_k$.
\end{proof}

\section{Further discussions}\label{sec:discuss}
Let us conclude our paper by summarizing known cases for Conjectures~\ref{con:geelen} and \ref{con:pivotminor}. 
As far as we know, 
the class of graphs having no $H$ vertex-minor is $\chi$-bounded if 
\begin{itemize}
\item $H$ is a distance-hereditary graph (due to Theorem~\ref{thm:scott}), 
\item $H$ is a vertex-minor of a fan graph (Theorem~\ref{thm:mainfanvertexminor}),
\item $H=W_5$ (due to Dvo\v{r}\'{a}k and Kr\'{a}l'~\cite{DvorakK2012}),
\end{itemize}
and 
the class of graphs having no $H$ pivot-minor is $\chi$-bounded if 
\begin{itemize}
\item $H$ is a pivot-minor of a cycle graph (Theorem~\ref{thm:mainpivotminor}),
\item $H$ is a pivot-minor of a $1$-subdivision of a tree, which we can deduce easily from Theorem~\ref{thm:scott},
\item $H$ is a pivot-minor of a tree satisfying Gy\'{a}rf\'{a}s-Sumner conjecture, which we describe below.
\end{itemize}
Gy\'arf\'as~\cite{Gyarfas1975} and Sumner~\cite{Sumner1981} independently conjectured that for a fixed tree $T$, the class of graphs having no induced subgraph isomorphic to $T$ is $\chi$-bounded. So far this conjecture is known to be true for the following cases:
\begin{itemize}
\item $T$ is a subdivision of a star (due to Scott~\cite{Scott1997}),
\item $T$ is a tree of radius $2$ (due to Kierstead and Penrice~\cite{KiersteadR1994}),
\item $T$ is a tree of radius $3$ obtained from a tree of radius $2$ by making exactly one subdivision in every edge adjacent to the root (due to Kierstead and Zhu~\cite{KiersteadZ2004}).
\end{itemize}
Note that a cycle is a vertex-minor of a large fan graph. Thus, Conjecture~\ref{con:geelen} holds when $H$ is a cycle graph, by two reasons, 
one  by Theroem~\ref{thm:mainfanvertexminor} 
and another by the proof of (ii) of Conjecture~\ref{con:gy} by Scott and Seymour~\cite{ScSe_1}.

One may wish to have a structure theorem describing graphs with no fixed vertex-minors or no fixed pivot-minors in order to extend these theorems to other forbidden graphs.
Indeed, Oum~\cite{Oum2006a} conjectured the following.
A graph is a \emph{circle graph} if it is an intersection graph of chords in a circle.
Rank-width is a width parameter of graphs introduced by Oum and Seymour~\cite{OS2004}. 
\begin{conjecture}\label{con:oum}
  Let $H$ be a bipartite circle graph. Every graph with sufficiently large rank-width contains a pivot-minor isomorphic to $H$. 
\end{conjecture}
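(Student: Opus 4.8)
The plan is to reduce Conjecture~\ref{con:oum} to the single case where $H$ is the fundamental graph of a large grid, and then to attack that case through the matroidal picture. It is known that, up to pivot-equivalence, bipartite circle graphs are exactly the fundamental graphs of planar graphs: pivoting an edge of a bipartite graph is a change of basis of the associated binary matroid (hence leaves the matroid unchanged), while deleting a vertex on one side, respectively the other side, of the bipartition corresponds to deleting, respectively contracting, an element. Since every graph minor induces a matroid minor, every planar graph is a minor of a sufficiently large square grid, and the fundamental graph of a grid is itself a bipartite circle graph, it follows that every bipartite circle graph $H$ is a pivot-minor of the fundamental graph $\Gamma_n$ of the $n \times n$ grid for $n$ large enough (handling components of $H$ separately if necessary). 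Thus it suffices to prove that for every $n$ there is a function $f_n$ such that every graph of rank-width at least $f_n$ contains a pivot-minor isomorphic to $\Gamma_n$.

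For graphs that are already bipartite this can be obtained from known results. Via Oum's correspondence between bipartite graphs and binary matroids, the rank-width of the fundamental graph of a binary matroid equals its branch-width up to an additive constant, and pivot-minors of a bipartite graph correspond precisely to minors of the associated binary matroid. Hence a bipartite graph of large rank-width yields a binary matroid of large branch-width, which by the grid theorem for binary matroids (a consequence of the Robertson--Seymour grid theorem together with the structure theory of Geelen, Gerards, and Whittle) contains the cycle matroid of the $n\times n$ grid as a minor; translating back produces a pivot-minor isomorphic to $\Gamma_n$.

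The main obstacle is the non-bipartite case: a graph $G$ of large rank-width need not be bipartite, and its pivot-minors need not be bipartite either, so $G$ cannot be fed directly into the matroid machinery. One promising route is to pass to the bipartite graph $\widetilde G$ associated with the binary delta-matroid (isotropic system) of $G$ --- concretely, $\widetilde G$ can be taken on two copies of $V(G)$ with bipartite adjacency matrix $A(G)+I$. This graph is bipartite, has rank-width within a constant factor of $\mathrm{rw}(G)$, and every pivot-minor of $\widetilde G$ obtained by deletions that \emph{respect the pairing} of the two copies of each vertex descends to a vertex-minor, indeed a pivot-minor once one tracks parities, of $G$. The crux would then be to show that the $\Gamma_n$-pivot-minor guaranteed inside $\widetilde G$ by the bipartite case can always be chosen to respect this pairing, by re-routing the sequence of pivots and deletions --- and I expect \emph{this compatibility step to be the genuine difficulty}. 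A heavier alternative is to prove a grid theorem for rank-width directly: show that large rank-width forces a high-order rank-decomposition obstruction (an analogue of a tangle), and extract $\Gamma_n$ from such an obstruction by a Ramsey-type cleaning argument that performs local complementations to render the relevant part bipartite before pivoting it into grid shape. Either way, what is really being asked for is the delta-matroid generalization of the grid theorem, and it is there that the real content lies.
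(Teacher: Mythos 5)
The statement you were asked about is not a theorem of the paper at all: it is Conjecture~\ref{con:oum}, an open conjecture attributed to Oum, and the authors explicitly state that they do not even know whether it holds in the special cases $H=F_n'$ or $H$ an even cycle. So there is no proof in the paper to compare against, and your text is, by your own admission, not a proof either. What you give is a reduction of the conjecture to (a) the bipartite case and (b) a ``compatibility'' step for non-bipartite graphs. Part (a) is essentially sound and known: bipartite graphs correspond to binary matroids, pivot-minors of a bipartite graph correspond to minors of the associated binary matroid, rank-width of the fundamental graph and branch-width of the matroid agree up to an additive constant, bipartite circle graphs are (up to pivot-equivalence) fundamental graphs of planar graphs, and the grid theorem for $\mathrm{GF}(2)$-representable matroids of Geelen, Gerards, and Whittle then yields the conclusion for bipartite input graphs. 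This is exactly why the bipartite case is regarded as settled and why the conjecture is interesting.

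The genuine gap is part (b), and it is not a technical loose end but the entire content of the conjecture. Passing to the bipartite graph $\widetilde G$ with bipartite adjacency matrix $A(G)+I$ does give a bipartite graph of comparable rank-width, but the grid-like pivot-minor that the binary matroid grid theorem produces inside $\widetilde G$ uses deletions of arbitrary vertices of $\widetilde G$, and you offer no argument that these can be rerouted so as to respect the pairing of the two copies of each vertex of $G$; moreover, even the claim that pairing-respecting pivot-minors of $\widetilde G$ descend to pivot-minors (rather than merely vertex-minors, or nothing at all when both copies of a vertex are deleted) of $G$ is asserted with a hand-wave about ``tracking parities'' and would need a careful isotropic-system/delta-matroid argument. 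You yourself flag this step as ``the genuine difficulty'' and say the real content is a delta-matroid grid theorem --- which is precisely the open problem. So the proposal restates the conjecture in equivalent language rather than proving it; it should be presented as a discussion of the known bipartite case and a possible strategy, not as a proof.
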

This conjecture, if true, implies $\chi$-boundedness by the following theorem of 
Dvo\v{r}\'{a}k and Kr\'{a}l'~\cite{DvorakK2012}.
\begin{theorem}[Dvo\v{r}\'{a}k and Kr\'{a}l'~\cite{DvorakK2012}]\label{thm:kral}
  For each integer $k$, the class of graphs of rank-width at most $k$ is $\chi$-bounded.
\end{theorem}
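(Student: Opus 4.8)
The plan is to prove the equivalent quantitative statement that for every fixed $k$ there is a function $f_k\colon\mathbb{N}\to\mathbb{N}$ with $\chi(G)\le f_k(\omega(G))$ for every graph $G$ of rank-width at most $k$, proceeding by induction on $\omega(G)$; the base case $\omega(G)\le 1$ (edgeless graphs) is trivial. For the inductive step the goal is to peel off an induced subgraph $G[S]$ that meets every maximum clique of $G$, so that $\omega(G\setminus S)<\omega(G)$ and hence $\chi(G\setminus S)\le f_k(\omega(G)-1)$ by induction, while keeping $\chi(G[S])$ bounded by a function $h(k)$ of $k$ alone; then $\chi(G)\le h(k)+f_k(\omega(G)-1)$, so $f_k$ may be taken linear in $\omega$ (at worst a polynomial in $\omega$ of degree depending on $k$, if the peeling turns out to need a further induction on $\abs{V(G)}$).

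The only property of rank-width I intend to use is the following. If $(A,B)$ is the bipartition of $V(G)$ obtained by deleting an edge of a width-$k$ rank-decomposition, then the $A\times B$ adjacency matrix over $\mathrm{GF}(2)$ has rank at most $k$, so there are at most $2^{k}$ distinct sets of the form $N_G(a)\cap B$ with $a\in A$. Hence $A$ partitions into at most $2^{k}$ classes of vertices having a common neighbourhood in $B$, symmetrically $B$ partitions into at most $2^{k}$ such classes, and between a class on one side and a class on the other the adjacency in $G$ is either complete or empty; moreover $G[A]$ and $G[B]$ again have rank-width at most $k$. Since every subcubic tree has an edge whose deletion leaves at least a third of the leaves on each side, one may choose this bipartition with $\abs{A},\abs{B}\le\tfrac{2}{3}\abs{V(G)}$, which makes the decomposition usable recursively down the tree.

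To produce the clique-hitting set $S$ I would exploit this blocky picture. A maximum clique $K$ of $G$ meets $A$ in a clique of $G[A]$ and meets $B$ in a clique of $G[B]$, and these two cliques are completely joined in $G$ (so at least one of them has size at most $\omega(G)/2$); recursing, $K$ is assembled from cliques sitting in ever-smaller parts of the decomposition, each part again of rank-width at most $k$. Collecting at each node of the tree a bounded number of clique-hitting pieces — each of controlled chromatic number because it is a union of boundedly many twin-classes — should assemble an $S$ meeting every maximum clique of $G$ and satisfying $\chi(G[S])\le h(k)$, at which point the induction on $\omega$ closes. Via the estimate $\mathrm{cw}(G)\le 2^{\mathrm{rw}(G)+1}$ the whole argument can equivalently be run on a clique-width expression, inducting on its subexpressions: the key observation there is that whenever a join operation is applied with both sides nonempty, each side has clique number strictly smaller than $\omega(G)$, so by induction its chromatic number is already under control and one only has to merge the two sides with $O_k(1)$ overhead.

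The main obstacle is precisely this merging/peeling step: arranging that $S$ can be taken with $\chi(G[S])$ bounded by a function of $k$ that does not grow with $\abs{V(G)}$. The naive alternative — take the balanced bipartition $(A,B)$ and colour $G[A]$ and $G[B]$ with two disjoint palettes — loses a factor of $2$ at every level of the decomposition, yielding only a bound polynomial in $\abs{V(G)}$, which is worthless for $\chi$-boundedness; so the combining step must genuinely use that the $A$--$B$ adjacency is carried by only $2^{2k}$ complete-bipartite blocks and that the clique number strictly drops once the relevant pieces are isolated. Making this quantitative, and checking that the induction on $\omega$ terminates with a bound of the form $f_k(\omega)$, is the technical heart. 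The iterated $C_5$-blow-up graphs, which have rank-width $O(k)$ and ratio $\chi/\omega$ of order $(5/4)^{\Theta(k)}$, suggest that $h(k)$ must in any case be exponential in $k$.
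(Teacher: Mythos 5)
The theorem you are asked to prove is quoted by the paper from Dvo\v{r}\'{a}k and Kr\'{a}l' and is not proved there, so your sketch has to stand on its own; and its central step is not merely unfinished but cannot work. If every graph $G$ of rank-width at most $k$ contained a set $S$ meeting every maximum clique with $\chi(G[S])\le h(k)$, then your induction on $\omega$ would give $\chi(G)\le h(k)\,(\omega(G)-1)+1$, i.e.\ a \emph{linear} $\chi$-bounding function for each fixed $k$. That is false already for a fixed small rank-width: iterated substitutions of $C_5$ into itself have clique-width (hence rank-width) bounded by an absolute constant, since substitution does not raise clique-width above the maximum over the parts --- so your attribution of ``rank-width $O(k)$'' to these graphs is incorrect --- while after $t$ iterations $\omega=2^t$ and $\chi\ge\chi_f=(5/2)^t$, so $\chi$ grows like $\omega^{\log_2(5/2)}$, superlinearly in $\omega$. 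Thus your own closing example does not ``suggest $h(k)$ is exponential in $k$''; it shows that no function $h(k)$ of $k$ alone can exist, and the whole plan of peeling a maximum-clique-hitting set of bounded chromatic number collapses. (Your fallback hope of a polynomial-in-$\omega$ bound is in fact true for bounded clique-width, but that is a much later and substantially harder theorem of Bonamy and Pilipczuk, not something the present sketch delivers.)

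There are also unjustified intermediate claims. A class of vertices of $A$ with a common neighbourhood in $B$ is homogeneous only towards $B$; internally it induces an arbitrary graph of rank-width at most $k$, so asserting that a union of boundedly many such twin-classes has ``controlled chromatic number'' bounded in $k$ is exactly the statement you are trying to prove, not a consequence of the low-rank cut. Similarly, in a clique-width expression the join $\eta_{i,j}$ adds edges between two label classes inside a single graph; it is not a join of two disjoint sides, so ``each side has clique number strictly smaller than $\omega(G)$'' has no meaning there (that picture is correct only for cograph-style joins). The genuine content of the Dvo\v{r}\'{a}k--Kr\'{a}l' theorem lies in an argument organized quite differently around the rank-decomposition, not around hitting maximum cliques, and nothing in your proposal substitutes for it.
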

Let $F_n'$ be a graph obtained from $F_n$ by subdividing each edge on the induced path precisely once. It can be easily seen that $F_n'$ is a bipartite circle graph and $F_n$ is a vertex-minor of $F_n'$. 
Thus if Conjecture~\ref{con:oum} holds, then the class of graphs with no $F_n$ vertex-minor has bounded rank-width and therefore by Theorem~\ref{thm:kral}, it will be $\chi$-bounded, implying Theorem~\ref{thm:mainfanvertexminor}.
Similarly we can also see easily that Conjecture~\ref{con:oum} implies Theorem~\ref{thm:mainpivotminor}.
However, we do not know yet whether Conjecture~\ref{con:oum} holds when $H=F_n'$ or $H$ is an even cycle.

Furthermore it would be interesting to see whether Conjectures~\ref{con:geelen} and \ref{con:pivotminor} hold when $H$ is a wheel graph on at least $6$ vertices, since such a graph $H$ is not a circle graph and therefore Conjectures~\ref{con:geelen} and \ref{con:pivotminor}  are independent of Conjecture~\ref{con:oum}.

\end{document}